\title{Gaussian deconvolution and the lace expansion \\
for spread-out models}
\author{
Yucheng Liu\thanks{Department of Mathematics,
	University of British Columbia,
	Vancouver, BC, Canada V6T 1Z2.
	Liu: \url{https://orcid.org/0000-0002-1917-8330},
		\href{mailto:yliu135@math.ubc.ca}{yliu135@math.ubc.ca}.
	Slade: \url{https://orcid.org/0000-0001-9389-9497},
		\href{mailto:slade@math.ubc.ca}{slade@math.ubc.ca}.
	}
\and
Gordon Slade$^*$
}
\date{\vspace{-5ex}} 
\theoremstyle{plain}
\newtheorem{theorem}{Theorem}[section]
\newtheorem{lemma}[theorem]{Lemma}
\newtheorem{proposition}[theorem]{Proposition}
\newtheorem{definition}[theorem]{Definition}
\newtheorem{assumption}[theorem]{Assumption}
\theoremstyle{definition}
\newtheorem{remark}[theorem]{Remark}
\numberwithin{equation}{section}
\newcommand{\eps}{\varepsilon}
\newcommand{\Z}{\mathbb{Z}}
\newcommand{\R}{\mathbb{R}}
\newcommand{\C}{\mathbb{C}}
\newcommand{\T}{\mathbb{T}}
\newcommand{\grad}{\nabla}
\newcommand{\inv}{^{-1}}
\renewcommand{\(}{\left(}
\renewcommand{\)}{\right)}
\newcommand{\half}{\frac{1}{2}}
\newcommand{\1}{\mathds{1}}
\newcommand{\nl}{\nonumber \\}
\providecommand{\abs}[1]{\lvert#1\rvert}
\providecommand{\norm}[1]{\lVert#1\rVert}
\providecommand{\Bigabs}[1]{\Big\lvert#1\Big\rvert}
\providecommand{\Bignorm}[1]{\Big\lVert#1\Big\rVert}
\providecommand{\bignorm}[1]{\big\lVert#1\big\rVert}
\newcommand{\nnb}{\nonumber\\}
\newcommand{\veee}[1]{|\!|\!|#1|\!|\!|}
\newcommand{\xvee}{\veee{x}}
\providecommand{\nnnorm}[1]{\veee #1}
\newcommand{\const}{\mathrm{const}}
\newcommand{\Dnn}{P}
\newcommand{\lambdaSO}{\lambda}
\newcommand{\muSO}{\mu}
\newcommand{\crit}{_{z_c}}
\newcommand{\muz}{{\muSO_z}}
\newcommand{\HH}{G}
\newcommand{\G}{  \tilde G }
\begin{document}
\maketitle

\begin{abstract}
We present a new proof of $\abs x^{-(d-2)}$ decay of
critical two-point functions for spread-out statistical mechanical models on $\Z^d$ above
the upper critical dimension,
based on the lace expansion and assuming appropriate diagrammatic estimates.
Applications include spread-out models of
the Ising model and
self-avoiding walk in dimensions
$d >4$, and spread-out percolation for $d>6$.
The proof is based on an extension of the
new Gaussian deconvolution theorem we obtained in a recent paper. It
provides a technically simpler and conceptually more transparent approach than the
method of Hara, van der Hofstad and Slade (2003).
\end{abstract}

%


\section{Introduction and results}
\label{sec:so}

\subsection{Introduction}

The lace expansion has been used to prove mean-field behaviour for several statistical
mechanical models above their upper critical dimension $d_c$, including self-avoiding walk
($d_c=4$),
the Ising model ($d_c=4$), percolation
($d_c=6$), and lattice trees and lattice animals ($d_c=8$).
The expansion requires a small parameter for its
convergence, which roughly speaking
is $(d-d_c)\inv$ for nearest-neighbour models, so $d$ is required to be large,
e.g., $d \ge 11$ for percolation \cite{FH17}.  (An exception is self-avoiding walk
for which dimensions $d \ge 5$ have been handled with computer assistance \cite{HS92a}.)
This obscures the role of the upper
critical dimension, and in order to apply lace expansion methods to analyse
critical behaviour in all dimensions $d>d_c$, spread-out models were first used
in \cite{HS90a}.  Spread-out models extend nearest-neighbour connections to include
long
connections (finite-range or rapidly decaying) parametrised by a large
parameter $L\gg 1$.
The reciprocal of $L$ provides a small parameter
that can be used to obtain convergence of the lace expansion without the need to
take the dimension artificially large.
In addition, the proof of mean-field
behaviour for a wide class of spread-out models serves as a demonstration of universality.

In \cite{HHS03,Saka07}, $|x|^{-(d-2)}$ decay of critical two-point functions
was proved for spread-out versions of all the above-mentioned models above $d_c$
(i.e., the critical exponent $\eta$ is equal to zero), and this decay has important
applications, e.g., \cite{HJ04,HMS23,KN11} for percolation.
The proofs in \cite{HHS03,Saka07} are based on a bootstrap argument and a deconvolution theorem developed in
\cite{HHS03}.  The method of \cite{HHS03} involves intricate Fourier analysis,
and our purpose in this paper is to provide a technically simpler and conceptually
clearer replacement for the method of \cite{HHS03}.  Our method is based on
the deconvolution theorem of \cite{LS24a}, which itself was inspired by \cite{Slad22_lace}.
The deconvolution theorem of \cite{LS24a} provides a new and simpler proof of
Hara's Gaussian Lemma \cite{Hara08}, using only elementary facts about the Fourier transform, product and quotient rules of differentiation, and H\"older's inequality.
We adapt it here to apply to spread-out models.
In the process, we rely on results proved in \cite{LS24a}, particularly those
concerning the $L^p$ theory of the Fourier transform in the context of weak derivatives.

The adaptation requires control of the
$L$-dependence in estimates, which feeds into the bootstrap argument
that is central to the convergence proof for the lace expansion.
The general deconvolution theorem of \cite{LS24a} does not, on its own, provide
sufficient control in error bounds to effectively deal with this interrelation.
For this reason, we also develop and present a generic approach to the bootstrap analysis
in the context of spread-out models.

For self-avoiding walk, the lace expansion
\cite{BS85,Slad06} produces a
convolution equation for the two-point function $G$, of the form
\begin{equation}
\label{eq:FG}
    F*G=\delta
\end{equation}
with $(F*G)(x) = \sum_{y\in \Z^d} F(x-y)G(y)$, $\delta$ the Kronecker delta $\delta(x) = \delta_{0,x}$, and $F$ explicitly defined by the lace expansion.
We refer to \eqref{eq:FG} as the \emph{impulse} equation.
Our deconvolution theorem gives conditions on $F$ which guarantee  $G(x)$ to have $|x|^{-(d-2)}$ decay at criticality.
For percolation or the Ising model, the lace expansion
\cite{HS90a, Saka07, HH17book}
instead produces an inhomogeneous convolution equation
of the form
\begin{equation}
\label{eq:Hh}
    \tilde{F}*\HH = h  ,
\end{equation}
with explicit functions $\tilde F$ and $h$.
We will prove that in the spread-out setting, \eqref{eq:Hh} can be reduced to
the simpler \eqref{eq:FG},
so that our deconvolution theorem then extends to \eqref{eq:Hh}.
In \cite{HHS03}, \eqref{eq:FG} is instead rewritten in the form \eqref{eq:Hh} in order
to handle both equations simultaneously.  Our reduction is more efficient,
as it reduces the more difficult equation to the simpler one, rather than vice versa.

\medskip\noindent
{\bf Notation.}  We write
$a \vee b = \max \{ a , b \}$ and $a \wedge b = \min \{ a , b \}$.
We write
$f = O(g)$ or $f \lesssim g$ to mean there exists a constant $C> 0$ such that $\abs {f(x)} \le C\abs {g(x)}$, and $f= o(g)$ for $\lim f/g = 0$.
To avoid dividing by zero, with $|x|$ the Euclidean norm of $x\in \R^d$, we define
\begin{equation}
\label{eq:xvee1}
	\xvee = \max\{|x| , 1\}.
\end{equation}
Note that \eqref{eq:xvee1} does not define a norm on $\R^d$.

\medskip\noindent
{\bf Fourier transform.}
Let $\T^d=(\R/2\pi\Z)^d$ denote the continuum torus, which
we identify with $(-\pi,\pi]^d \subset \R^d$.
For a function $f \in L^1(\Z^d)$, the Fourier transform and its inverse are given by
\begin{equation}
\hat f(k)  = \sum_{x\in\Z^d}f(x) e^{ik\cdot x} 	\quad (k \in \T^d),
\qquad
f(x) = \int_{\T^d} \hat f(k) e^{-ik\cdot x}  \frac{ dk }{ (2\pi)^d } \quad (x\in \Z^d).
\end{equation}

\subsection{Main results}
\label{sec:so-def}

\subsubsection{Spread-out random walk}

Our point of reference is the Green function (two-point function) of
a spread-out random walk on $\Z^d$, whose
transition probability $D: \Z^d \to [0, 1]$
is given by the following definition.

\begin{definition}
\label{def:D}
Let $v : \R^d \to [0, \infty)$ be bounded,
$\Z^d$-symmetric (invariant under reflection in coordinate hyperplanes
and rotation by $\pi/2$), supported in $[-1, 1]^{d}$,
with $\frac{\partial^d v}{\partial x_1 \cdots \partial x_d}$
piecewise continuous,
and with $\int_{[-1, 1]^{d}}v(x) d x = 1$.
Given $v$, we define $D: \Z^d \to [0, 1]$ by
\begin{equation}
\label{eq:Ddef}
    D(x) = \frac{v(x/L)}{\sum_{x\in \Z^d}v(x/L)}.
\end{equation}
\end{definition}

By definition, $D$ is always supported in $[-L, L]^d$.
For example, if $v(x) = 2^{-d} \1 \{ \norm x_\infty \le 1 \}$, then
$D$ is the uniform distribution on $[-L, L]^d \cap \Z^d $.

\smallskip\noindent
{\bf Large $L$ assumption.}
For the above definition to make sense, the denominator in
\eqref{eq:Ddef} must be nonzero.
It will be nonzero if $L$ is large enough (depending on $d$ and $v$),
since $\sum_{x\in \Z^d}v(x/L) \sim L^{d}$ as $L\to\infty$
by a Riemann sum approximation to $\int_{[-1, 1]^{d}}v(x)dx$.
Similarly, the variance $\sigma^2 = \sum_{x\in\Z^d} |x|^2 D(x)$
of $D$ as is asymptotic to a multiple of $L^2$.
We assume throughout the entire paper that
$L \ge L_0$ with $L_0$ chosen large enough that the denominator of \eqref{eq:Ddef} is
nonzero.
Moreover, when required we will increase the value of $L_0$, but only in a manner
that depends on $d$ and $v$ alone.
Since we work throughout with a fixed dimension $d$
and a fixed function $v$,  we do not track the dependence
on $d$ or $v$ of constants in bounds:  all constants in bounds are permitted to depend on $d$
and $v$.

\medskip
For $\mu \in [0,1]$, we let $S_\mu$ denote the \emph{spread-out Green function},
which is the solution of the convolution equation
\begin{equation} \label{eq:S}
    (\delta - \mu D)*S_\mu = \delta
\end{equation}
given by
\begin{equation}
S_\mu(x)= \int_{\T^d}\frac{e^{-ik\cdot x}}{1-\mu\hat D(k)} \frac{dk}{(2\pi)^d}
\qquad (d>2).
\end{equation}
When $D(x)$ is replaced by $\Dnn(x) = \frac{1}{2d} \1_{|x|=1}$, we have the nearest-neighbour random walk, and we denote its Green
function by $C_\mu(x)$.
When $\mu\in (0,1)$, both $S_\mu$ and $C_\mu$ decay exponentially.
In the critical case $\mu=1$, it is well-known (e.g., \cite{LL10, Uchi98}) that
\begin{equation}
\label{eq:C1_asymp}
    C_1(x) =
    \frac { a_d }{\xvee^{d-2} }
    + O\(\frac 1 {\xvee^d }\) ,
    \qquad a_d = \frac{ d \Gamma(\frac{ d-2 } 2 ) }{ 2\pi^{d/2}}
    \qquad (d>2).
\end{equation}
For $S_1(x)$, 	we establish a similar statement in Proposition~\ref{prop:so-nn}.

\begin{proposition}[Spread-out Green function]
\label{prop:so-nn}
Let $d>2, \eps >0$, and $L \ge L_0$. 
Then
\begin{align}
\label{eq:CL-uni2}
    S_1(x) & =  \delta_{0,x} + \frac{1}{\sigma^2} C_1(x)
    +O\( \frac{1}{L^{1-\eps} \xvee^{d-1}} \),
\end{align}
with the constant uniform in $L$ but dependent on $\eps$.
Also, there is a constant $K_S = K_S(\eps)$ such that
\begin{equation}
\label{eq:S1bd}
    S_1(x) \leq  \delta_{0,x}+ K_S L^{-(2-\eps)} \xvee^{-(d-2)} \qquad (x \in \Z^d).
\end{equation}
\end{proposition}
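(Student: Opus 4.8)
The plan is to analyse the difference $g := S_1 - \delta - \sigma^{-2}C_1$ through its Fourier transform. Since $\hat S_1(k) = (1-\hat D(k))^{-1}$ and $\hat C_1(k) = (1-\hat \Dnn(k))^{-1}$,
\begin{equation*}
\hat g(k) = \frac{\hat D(k)}{1-\hat D(k)} - \frac{1}{\sigma^2\bigl(1-\hat \Dnn(k)\bigr)},
\qquad
g(x) = \int_{\T^d} \hat g(k)\, e^{-ik\cdot x}\,\frac{dk}{(2\pi)^d}.
\end{equation*}
The key point is that the leading singularities at $k=0$ cancel: by $\Z^d$-symmetry, $1-\hat D(k) = \tfrac{\sigma^2}{2d}|k|^2 + O(L^4|k|^4)$ and $1-\hat \Dnn(k) = \tfrac1{2d}|k|^2 + O(|k|^4)$ for small $|k|$, so both terms equal $\tfrac{2d}{\sigma^2|k|^2}$ to leading order and $\hat g$ is bounded near the origin. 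Carrying the expansion of $\hat D$ one step further, and using $\sigma^2 = \sum_x |x|^2 D(x) \asymp L^2$, one finds, for $|k| \le cL^{-1}$,
\begin{equation*}
\hat g(k) = \frac{(2d)^2 c_4(k)}{\sigma^4|k|^4} - 1 + O\!\bigl(L^{-2} + L^2|k|^2\bigr),
\qquad
c_4(k) := \frac1{24}\sum_{y\in\Z^d}(k\cdot y)^4 D(y),
\end{equation*}
where $c_4$ is a positive homogeneous quartic with $c_4(k)\asymp L^4|k|^4$; thus $\sigma^{-4}c_4(k)/|k|^4$ is a bounded function, homogeneous of degree zero (and, for generic $v$, not constant, hence discontinuous at $k=0$), of size $O(1)$. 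This homogeneous-degree-zero part is what governs the decay. (The ``$-1$'' records that $\hat\delta\equiv 1$.)

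Next I would fix a smooth cutoff $\chi$ equal to $1$ on $\{|k|\le cL^{-1}\}$ and supported in $\{|k|\le 2cL^{-1}\}$, write $\hat g = \chi\hat g + (1-\chi)\hat g$, and bound, for each $j$, the $L^1(\T^d)$ norm of the weak derivative $\partial_{k_j}^{d-1}$ of each piece, as well as $\|\hat g\|_{L^1(\T^d)}$. For the near piece $\chi\hat g$: the main term $\chi(k)(2d)^2\sigma^{-4}c_4(k)|k|^{-4}$ is smooth off the origin and homogeneous of degree zero there, so its $(d-1)$-st order $k$-derivatives are $O(|k|^{-(d-1)})$, and $\int_{|k|\le 2cL^{-1}} |k|^{-(d-1)}\,dk \asymp L^{-1}$ — this is why $d-1$, rather than $d$, derivatives are the borderline, and why the small factor $L^{-1}$ appears (the integral of $|k|^{-d}$ would diverge). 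The cutoff derivatives live on the shell $|k|\asymp L^{-1}$, where each costs $L$ but is paired with a derivative of the degree-zero symbol of size $L^{-1}$, so they contribute at the same order, and the remainder terms (the constant $-1$, whose inverse transform $-\chi^\vee$ is concentrated on scale $L$, and the pieces $O(L^{-2}+L^2|k|^2)$, which vanish at $k=0$) are controlled similarly; all told $\|\partial_{k_j}^{d-1}(\chi\hat g)\|_{L^1(\T^d)}\lesssim L^{-1}$. For the far piece $(1-\chi)\hat g$, supported in $\{|k|\ge cL^{-1}\}$: here one uses the standard spectral bounds $1-\hat D(k)\gtrsim \min(L^2|k|^2,1)$ and $|\partial_{k_j}^m \hat D(k)| \lesssim L^m \prod_i \min(1,|Lk_i|^{-1})$, the latter obtained by integrating by parts once in each coordinate and using that $\frac{\partial^d v}{\partial x_1\cdots\partial x_d}$ is piecewise continuous (so that $\hat v$, hence $\hat D$ and each $\widehat{D^{*n}} = \hat D^n$, decays this way); feeding these into the quotient/product rules for $\partial_{k_j}^{d-1}\bigl((1-\chi)/(1-\hat D)\bigr)$, and using $\int_{\{c\le|\xi|\le\pi L\}}\prod_i\min(1,|\xi_i|^{-1})\,d\xi \lesssim (\log L)^d$, gives $\|\partial_{k_j}^{d-1}((1-\chi)\hat g)\|_{L^1(\T^d)}\lesssim L^{-1}(\log L)^{O(1)}$ (the $\tfrac1{\sigma^2}(1-\hat \Dnn)^{-1}$ contribution being handled identically with no logarithm). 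A parallel, easier computation gives $\|\hat g\|_{L^1(\T^d)}\lesssim L^{-2}$.

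Granting these bounds, \eqref{eq:CL-uni2} follows quickly. From $\hat g$ and $\partial_{k_j}^{d-1}\hat g$ lying in $L^1(\T^d)$, integration by parts — justified in the weak-derivative sense by the $L^p$ Fourier-transform theory of \cite{LS24a} — gives $|x_j|^{d-1}|g(x)| \lesssim \|\partial_{k_j}^{d-1}\hat g\|_{L^1(\T^d)}\lesssim L^{-1}(\log L)^{O(1)}$ for each $j$, hence $\xvee^{d-1}|g(x)|\lesssim L^{-1}(\log L)^{O(1)}$; since $(\log L)^{O(1)}\le \const(\eps)\,L^{\eps}$ for $L \ge L_0$, this is $|g(x)|\lesssim L^{-(1-\eps)}\xvee^{-(d-1)}$, which is \eqref{eq:CL-uni2} (the case $x=0$ being covered by $|g(0)|\le\|\hat g\|_{L^1}\lesssim L^{-2}$). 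For \eqref{eq:S1bd}: when $\xvee\ge L$, combine \eqref{eq:CL-uni2} with $C_1(x)\lesssim \xvee^{-(d-2)}$ from \eqref{eq:C1_asymp}, with $\sigma^2\asymp L^2$, and with $\tfrac1{L^{1-\eps}\xvee^{d-1}}\le \tfrac1{L^{2-\eps}\xvee^{d-2}}$ (valid for $\xvee\ge L$); when $\xvee\le L$, use instead $S_1(x)-\delta_{0,x} = \sum_{n\ge1}D^{*n}(x)\ge 0$ together with the random-walk estimate $\|D^{*n}\|_\infty\lesssim (nL^2)^{-d/2}$ (from the spectral bounds on $\hat D$), whose sum over $n$ is $O(L^{-d})\le K_S L^{-(2-\eps)}\xvee^{-(d-2)}$ for $\xvee\le L$.

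The conceptual content — the cancellation of the $|k|^{-2}$ singularity in $\hat g$, leaving a bounded homogeneous-degree-zero symbol near $k=0$ — is immediate, so the main obstacle is the uniform-in-$L$ bookkeeping in the middle step: one must track powers of $L$ through the Taylor expansion of $\hat D$ at $k=0$, verify carefully that the non-smooth part of $\hat g$ is genuinely homogeneous of degree zero (so that exactly $d-1$ derivatives are borderline-integrable there, and the correct decay $\xvee^{-(d-1)}$ rather than $\xvee^{-d}$ emerges, with the gain $L^{-1}$), and handle the transition region $|k|\asymp L^{-1}$ by means of the smooth cutoff; the mild $(\log L)^{O(1)}$ loss coming from the limited smoothness of $v$ is exactly what is absorbed into the harmless $\eps$.
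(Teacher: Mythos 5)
Your proposal is correct in outline and reaches both \eqref{eq:CL-uni2} and \eqref{eq:S1bd}, but it takes a genuinely different route from the paper in the central step. The paper never Taylor-expands $1/(1-\hat D)$ beyond identifying the vanishing zeroth and second moments of $E=(\delta-\Dnn)-\sigma^{-2}(\delta-D)$: it writes $S_1=\delta+\sigma^{-2}C_1+\varphi$ with $\varphi=D*f-\sigma^{-2}C_1*(\delta-D)$ and $\hat f=\hat E/(\hat A\hat F)$, and then bounds $\|\hat\varphi_\alpha\|_1$ for $|\alpha|\le d-1$ by H\"older's inequality from $L^q$ estimates on the ratios $\hat A_\gamma/\hat A$, $\hat F_\gamma/\hat F$, $\hat E_\gamma/(\hat A\hat F)$ (Lemma~\ref{lem:AFEbds_S}, a variant of Lemma~\ref{lem:AFEbds_SO}); the crucial gain of $L^{-1}$ comes from the extra convolution by $D$ via $\|\hat D_{\alpha_2}\|_{d/(|\alpha_2|+1-\eps)}\lesssim L^{-(1-\eps)}$, and from $\sigma^{-2}\lesssim L^{-2}$ in the term $\sigma^{-2}C_1*(\delta-D)$. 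You instead expand $\hat g$ explicitly near $k=0$, identify the bounded homogeneous degree-zero quartic symbol $c_4(k)/(\sigma^4|k|^4)$ as the obstruction to smoothness, and extract $L^{-1}$ from the volume $L^{-d}$ of the ball where that symbol lives (via $\int_{|k|\le c/L}|k|^{-(d-1)}dk\asymp L^{-1}$) together with the cutoff construction; your far-region estimate via $|\hat D_\alpha(k)|\lesssim L^{|\alpha|}\prod_i\min(1,|Lk_i|^{-1})$ is the same input the paper uses in Appendix~\ref{app:D}. What your version buys is a very concrete explanation of why exactly $d-1$ derivatives are borderline and where the factor $L^{-1}$ originates; what the paper's version buys is that it reuses the Section~\ref{sec:proof-so} machinery verbatim (Lemmas~\ref{lem:Graf}, \ref{lemma:Ek}, \ref{lem:FTbdd}) with no cutoff and no explicit higher-order expansion. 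The one place where your sketch leaves real work is the phrase ``controlled similarly'' for the remainder in the expansion of $1/(1-\hat D)$ on $|k|\le c/L$: that remainder is an infinite sum of homogeneous pieces of degrees $\ge 2$ whose $(d-1)$-st derivatives behave like $L^2|k|^{3-d}$, and one must check (as the orders indeed confirm) that each contributes $O(L^{-1})$ to the $L^1$ norm and that the series converges for $c$ small; this bookkeeping is precisely what the paper's H\"older/ratio formulation automates. Your treatment of \eqref{eq:S1bd} matches the paper's for $\xvee\ge L$, and for $\xvee\le L$ you prove the bound $S_1(x)=\delta_{0,x}+O(L^{-d})$ by a local-CLT estimate where the paper simply cites \cite[Section~6.1]{HHS03}.
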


Proposition~\ref{prop:so-nn} shows that $S_1(x)$ has the same $\abs x^{-(d-2)}$ decay as $C_1(x)$.
When the constant in the error term is permitted to depend on $L$,
Proposition~\ref{prop:so-nn} is a standard result and is
known to hold with error $O(\xvee^{-d})$ like \eqref{eq:C1_asymp}, e.g., \cite{LL10,Uchi98}.
However, the uniformity in $L$ needs care, is not standard, and is required for our results.
It is proved in
\cite[Proposition~1.6]{HHS03}, using intricate Fourier analysis, that
\begin{align}
\label{eq:CL-HHS}
    S_1(x) & =  \frac{a_{d}}{\sigma^2} \frac{1}{\xvee^{d-2}}
    +O\( \frac{1}{\xvee^{d-2+s}}
    \) ,
\end{align}
for any $s < 2$, with the constant independent of $L$.
For the case $s=1$, our error estimate in \eqref{eq:CL-uni2} has a
good power of $L$ compared to \eqref{eq:CL-HHS}.
We prove Proposition~\ref{prop:so-nn} using the simple strategy introduced in
\cite{LS24a}, as an alternative to the proof in \cite{HHS03}.
Via the fractional derivative analysis of \cite[Section~2.3]{LS24a}, the error term in \eqref{eq:CL-uni2} can in fact be improved to
$O( L^{-(2-t)} \xvee^{-(d-2+s)})$
with any $0 \le s <t <2$ and with the constant independent of $L$.
We omit the details of this improvement, which we do not need or invoke later.

\subsubsection{Spread-out Gaussian deconvolution theorem}

We now turn to general spread-out models.
The following assumption isolates basic properties that are typical of the
two-point function of models such as self-avoiding walk or percolation.
Our restriction to $z_c \ge 1$ is a convenience
that can be achieved by scaling
the model's parameter (e.g., the bond occupation probability $p$ for percolation).

\begin{assumption}
\label{ass:G}
Let $d>2$.  We assume that $G_z:\Z^d \to [0,\infty]$,
defined for $z \in [1, z_c]$ with some $z_c \ge 1$,
is a family of $\Z^d$-symmetric functions such that:
\begin{enumerate}[label=(\roman*)]
\item
$\sum_{x\in\Z^d} G\crit(x) = \infty$,

\item
for each $z < z_c$, $\sum_{x\in\Z^d} G_z(x) < \infty$ and
$G_z(x) = o(\abs x^{-(d-2)})$ as $\abs x \to \infty$ (need not be uniform in $z$),
\item
for each $x$, $G_z(x)$ is non-decreasing and continuous in $z \in [1,z_c]$,

\item
$G_1(x) \le S_1(x)$ for all $x\in \Z^d$.
\end{enumerate}
\end{assumption}

It is not part of the assumption that the critical $G\crit(x)$ is finite,
and the goal is to prove that $G\crit(x)$ has Gaussian $\abs x^{-(d-2)}$ decay.
To do this, we first establish a uniform in $z < z_c$ bound using the model-independent bootstrap argument of \cite{HHS03}, and then we bound $G\crit$ using monotone convergence.
The bootstrap argument compares $G_z$ to an upper bound of $S_1$, as follows.
Let $d>2$, fix a small $\eps>0$,
and let $K_S = K_S(\eps)$ be the constant in the bound \eqref{eq:S1bd}.
Given $L$, we define the \emph{bootstrap function} $b:[1,z_c]\to [0,\infty]$ by
\begin{equation}
    b(z) = \max \Big\{
    \sup_{x\neq 0} \frac{G_z(x)}{ K_S L^{-2+\eps}|x|^{-(d-2)} } ,\,
    3(z-1) \Big\} .
\end{equation}
The function $b(z)$ is finite and continuous in $z\in [1,z_c)$
by Assumption~\ref{ass:G}(ii)--(iii),
and $b(1) \le 1$ by Assumption~\ref{ass:G}(iv).

The next assumption gives consequences of an \emph{a priori} bound $b(z) \le 3$.
It reflects the fact that when $b(z) \le 3$,
diagrams arising from the lace expansion, which are functionals of $G_z$, can be bounded by functionals of the explicit function $L^{-2+\eps}|x|^{-(d-2)}$.
The verification of Assumption~\ref{ass:diagram}
is model dependent and is carried out
for spread-out models of self-avoiding walk, Ising model, percolation, and lattice
trees and lattice animals, above their upper critical dimensions, in \cite{HHS03,Saka07}; its verification requires large $L$.
We do not verify Assumption~\ref{ass:diagram} here.
We will later prove that $b(z) \le 2$
for all $z\le z_c$ via the bootstrap argument,
again assuming large $L$.
The specific $L$-dependence in the upper bound of \eqref{eq:Pibd} plays an
important role in the bootstrap, and the
proof of $b(z) \le 2$
requires that dependence to mesh well with the $L$-dependence in the upper bound \eqref{eq:S1bd}
on $S_1(x)$.  This is a subtlety that does not occur in \cite{LS24a} but that must
be dealt with for spread-out models.

\begin{assumption}[Lace expansion]
\label{ass:diagram}
Let $d\ge 1$, let $D$ be given by Definition~\ref{def:D}, and let $\rho > \frac{d-8}2 \vee 0$.
If $b(z) \le 3$ then there exists a $\Z^d$-symmetric function $\Pi_z : \Z^d \to \R$
for which
\begin{equation} \label{eq:FGz_boot}
    G_z = \delta + zD*G_z + \Pi_z*G_z,
\end{equation}
and
\begin{equation}
\label{eq:Pibd}
    \abs{ \Pi_z (x) }
    \le
    K_\Pi L^{-2+\epsilon} \Big(  \delta_{0,x} +
    \frac{ o(1)} {\xvee^{d+2+\rho}} \Big)
	\qquad	(x \in \Z^d),
\end{equation}
where the constant
$K_\Pi$ is independent of $z$ and $L$, and $o(1)\to 0$ uniformly in $z$
and $x$ as $L \to \infty$.
\end{assumption}

Equation \eqref{eq:FGz_boot} is what the lace expansion produces for self-avoiding walk, and it
can be rewritten as the impulse equation
\begin{equation}
\label{eq:Glace}
    (\delta - zD - \Pi_z)*G_z = \delta .
\end{equation}
For percolation
and the Ising model, the lace expansion instead produces an inhomogeneous convolution
equation
\begin{equation}
\label{eq:Hhconv}
    \HH_z = h_z + zD * h_z * \HH_z
    \quad\iff\quad  (\delta - zD * h_z)* \HH_z = h_z,
\end{equation}
with an explicit function $h_z$.
The next assumption covers these models with inhomogeneous convolution equation.

\begin{assumption}[Inhomogeneous lace expansion]
\label{ass:diagram-h}
Let $d\ge 1$, let $D$ be given by Definition~\ref{def:D} and let $\rho > \frac{d-8}2 \vee 0$.
If $b(z) \le 3$ then
there exists a $\Z^d$-symmetric function $h_z : \Z^d \to \R$
for which
\begin{equation} \label{eq:Hhlace}
    \HH_z = h_z + zD*h_z*\HH_z,
\end{equation}
and
\begin{align}
\label{eq:h_decay-1}
    \abs{ h_z(x) - \delta_{0,x} }
    \le K_h L^{-2+\epsilon}\Big( \delta_{0,x} + \frac{ o(1) } { \xvee^{d+2+\rho}  }\Big)
    	\qquad	(x \in \Z^d),
\end{align}
where the constant
$K_h$ is independent of $z$ and $L$, and $o(1)\to 0$ uniformly in $z$
and $x$ as $L \to \infty$.
\end{assumption}

\begin{proposition}
\label{prop:general-1}
Let $d \ge 1$.
Suppose $h_z: \Z^d \to \R$ is $\Z^d$-symmetric and
satisfies \eqref{eq:h_decay-1} and its uniformity assumptions with $\rho > -2$.
Then there is an $L_1$ (depending on $K_h$, $\eps$, and the $o(1)$)
such that, for all $L \ge L_1$,
there exists a $\Z^d$-symmetric function $\Phi_z : \Z^d \to \R$
for which $\HH_z$ obeying \eqref{eq:Hhlace}
satisfies $F_z * \HH_z = \delta$ with
\begin{align} \label{eq:h-1}
F_z = \delta - zD - \Phi_z,
\qquad
    \abs{ \Phi_z(x) }
    \le
    K_\Phi L^{-2+\eps} \Big( \delta_{0,x} + \frac{ o(1) } { \xvee^{d+2+\rho}  } \Big)
    \qquad	 (x \in \Z^d),
\end{align}
with the constant
$K_\Phi$ independent of $z$ and $L$, and $o(1)\to 0$ uniformly in $z$
and $x$ as $L \to \infty$.
\end{proposition}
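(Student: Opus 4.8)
The plan is to reduce the inhomogeneous equation \eqref{eq:Hhlace} to the impulse form $F_z*\HH_z=\delta$ by inverting $h_z$ in the convolution algebra; heuristically this amounts to the Fourier identity $\hat F_z = \hat h_z\inv - z\hat D$, i.e.\ $\hat\Phi_z = 1 - \hat h_z\inv$. To make this rigorous without requiring $\HH_z\in\ell^1$ or that $1 - z\hat D\hat h_z$ is nonvanishing, set $e_z = h_z - \delta$, so that \eqref{eq:h_decay-1} reads $\abs{e_z(x)} \le K_h L^{-2+\eps}(\delta_{0,x} + o(1)\xvee^{-(d+2+\rho)})$. Since $\rho > -2$, the tail $\xvee^{-(d+2+\rho)}$ is summable over $\Z^d$, so $\norm{e_z}_1 = O(L^{-2+\eps})$ uniformly in $z$; in particular $\norm{e_z}_1 < 1$ once $L\ge L_1$. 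Define
\begin{equation}
    \Phi_z = \sum_{m\ge1}(-1)^{m-1} e_z^{*m},
\end{equation}
which converges absolutely in $\ell^1(\Z^d)$ because $\sum_{m\ge1}\norm{e_z}_1^m < \infty$; it is $\Z^d$-symmetric since $e_z$ is and convolution preserves $\Z^d$-symmetry. Note that the leading term is $e_z$ itself, which is why $\Phi_z$ should inherit the bound on $h_z - \delta$.

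\textbf{The impulse identity.} Telescoping the series gives the convolution identity $\Phi_z * h_z = e_z$, hence $(\delta - \Phi_z)*h_z = \delta$: thus $\delta - \Phi_z$ is the convolution inverse of $h_z$. Rewriting \eqref{eq:Hhlace} as $\HH_z = h_z*(\delta + zD*\HH_z)$ and convolving with $\delta - \Phi_z$ yields $(\delta - \Phi_z)*\HH_z = \delta + zD*\HH_z$, that is, $(\delta - zD - \Phi_z)*\HH_z = \delta$, as required. All convolutions here are legitimate because $\Phi_z, e_z, D \in \ell^1$ and decay at least as fast as $\xvee^{-(d+2+\rho)}$, so any growth condition on $\HH_z$ under which \eqref{eq:Hhlace} is meaningful also justifies convolving $\HH_z$ with $\Phi_z$ and interchanging the sum over $m$ with the convolution.

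\textbf{The decay bound, and the main obstacle.} What remains, and where the work lies, is \eqref{eq:h-1}. Split $e_z = e_z(0)\delta + f_z$, where $f_z(x) = e_z(x)\1\{x\neq 0\}$, so that $\abs{e_z(0)} \le 2K_h L^{-2+\eps}$ and $\abs{f_z(x)} \le K_h L^{-2+\eps}\, o(1)\, \xvee^{-(d+2+\rho)}$ for all $x$ once $L$ is large. Binomial expansion gives
\begin{equation}
    e_z^{*m} = e_z(0)^m\delta + \sum_{j=1}^m \binom{m}{j} e_z(0)^{m-j} f_z^{*j}.
\end{equation}
Summing the pure-$\delta$ parts over $m\ge1$ produces $\tfrac{e_z(0)}{1+e_z(0)}\delta$, whose coefficient is $O(L^{-2+\eps})$. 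For the remaining terms one uses that convolution powers of a function bounded by $C\xvee^{-(d+2+\rho)}$ are again bounded by $C_1^{j-1}C^j\xvee^{-(d+2+\rho)}$, with $C_1$ depending only on $d$ and $\rho$; this is the standard convolution estimate obtained by splitting the sum at $\abs y = \abs x/2$, and it uses $d+2+\rho > d$ (the same place $\rho > -2$ enters). Hence $\abs{f_z^{*j}(x)} \le C_1^{j-1}(K_h L^{-2+\eps}o(1))^j\xvee^{-(d+2+\rho)}$, and resumming by means of $\sum_{m\ge j}\binom{m}{j}t^{m-j} = (1-t)^{-(j+1)}$ with $t = \abs{e_z(0)} \le \tfrac12$, followed by a geometric series in $j$ that converges for $L$ large since $C_1 K_h L^{-2+\eps}o(1)\to 0$, bounds the non-$\delta$ part of $\Phi_z$ by $O(L^{-2+\eps})\, o(1)\, \xvee^{-(d+2+\rho)}$, the new $o(1)$ being a fixed multiple of the old one and hence still uniform in $z$ and $x$. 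Combining the two parts gives $\abs{\Phi_z(x)} \le K_\Phi L^{-2+\eps}(\delta_{0,x} + o(1)\xvee^{-(d+2+\rho)})$ with $K_\Phi$ depending only on $K_h, d, \rho$, in particular independent of $z$ and $L$. The main obstacle is precisely this resummation: keeping $K_\Phi$ uniform in $L$ despite the fact that $e_z^{*m}$ carries an unbounded number of convolution factors, each contributing a constant $C_1$ — this is exactly what the smallness $\norm{e_z}_1 = O(L^{-2+\eps})$ buys us.
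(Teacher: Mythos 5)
Your construction is correct, and its core --- inverting $h_z$ by a Neumann series in a convolution algebra and setting $\Phi_z=\delta-h_z^{-1}=\sum_{m\ge 1}(-1)^{m-1}(h_z-\delta)^{*m}$, so that $F_z=h_z^{-1}*(\delta-zD*h_z)$ kills the inhomogeneity --- is exactly the paper's strategy (Proposition~\ref{prop:general}, stated there for general small parameters $\beta_0,\beta_1$ and then specialised). The difference lies in how the refined decay bound is extracted, i.e.\ how one shows that the off-diagonal part of $\Phi_z$ inherits the extra $o(1)$ of $\beta_1$ rather than merely the $O(L^{-2+\eps})$ of $\beta_0$. The paper works in a single weighted Banach algebra with norm $\norm{v}_\zeta=\max\{2^{\zeta+1}\sum_x\abs{v(x)},\,\sup_x\abs{x}^\zeta\abs{v(x)}\}$, $\zeta=d+2+\rho$ (whose submultiplicativity is precisely your ``standard convolution estimate'', and which also needs $\rho>-2$), and gets the improvement from the recursion $g_{n+1}(x)=(g_n*f)(x)+f^{*n}(0)g_1(x)$ for $g_n=f^{*n}-f^{*n}(0)\delta$, yielding $\norm{g_n}_\zeta\le n\norm{f}_\zeta^{n-1}\norm{g_1}_\zeta$ and hence $\norm{\Phi_z-\Phi_z(0)\delta}_\zeta\le O(\beta_1)$ after summation. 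You instead split $e_z=e_z(0)\delta+f_z$ at the outset, expand $e_z^{*m}$ binomially, and resum over $j$; the two computations carry the same content (every contribution to $\Phi_z(x)$ with $x\ne 0$ contains at least one off-diagonal factor $f_z$, which supplies the $o(1)$), and both rely on the smallness $O(L^{-2+\eps})$ of $h_z-\delta$ to beat the unbounded number of convolution factors. Your route is more explicit but needs the interchange of the $m$- and $j$-sums and the identity $\sum_{m\ge j}\binom{m}{j}t^{m-j}=(1-t)^{-(j+1)}$; the paper's recursion avoids the double sum. One point you share with the paper: the step $(\delta-\Phi_z)*(zD*h_z*\HH_z)=zD*\HH_z$ tacitly uses associativity of convolution with the possibly non-summable $\HH_z$; you flag this, and in the intended applications it is harmless because $\HH_z$ is summable for $z<z_c$ and the critical case is reached by a limit in the bootstrap, not here.
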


In brief,
Proposition~\ref{prop:general-1} shows that Assumption~\ref{ass:diagram-h} implies Assumption~\ref{ass:diagram}, with $\Phi_z$ playing the role of $\Pi_z$:
the inhomogeneous equation \eqref{eq:Hhconv} can be rewritten
in the form of the impulse equation \eqref{eq:Glace}
when $h_z$ obeys \eqref{eq:h_decay-1}.
The following theorem is our main result.
A minor additional assumption allows its hypothesis $d>4$ to be weakened to $d>2$;
see Remark~\ref{rk:d2}.

\begin{theorem}
\label{thm:Gcrit}
Let $d>4$.
Under Assumption~\ref{ass:G}, together with either Assumption~\ref{ass:diagram} or
Assumption~\ref{ass:diagram-h},
there is an $L_2$ such that for all $L\ge L_2$,
\begin{align} \label{eq:Gcrit}
G\crit(x) \sim
\frac{ \lambdaSO\crit } { \sigma^2  } \frac {a_d }{\abs x^{d-2}}
\qquad \text{as $\abs x\to \infty$},
\end{align}
where $a_d$ is the constant of \eqref{eq:C1_asymp},
and, for any fixed $\eps>0$, $\lambdaSO\crit = 1 + O(L^{-2+\eps})$.
\end{theorem}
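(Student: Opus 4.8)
\emph{Plan of proof.} The plan is to reduce the problem to the impulse equation, establish a spread-out Gaussian deconvolution lemma in which the dependence on $L$ of all error terms is made explicit, use it to run the bootstrap of \cite{HHS03} and thereby bound $G_z$ uniformly in $z<z_c$, and finally pass to $z=z_c$ by monotone convergence and read off \eqref{eq:Gcrit} from the lemma. For the reduction: if Assumption~\ref{ass:diagram-h} is the one in force, then since $\rho>\frac{d-8}2\vee0>-2$ we may invoke Proposition~\ref{prop:general-1} to rewrite \eqref{eq:Hhlace} as $F_z*G_z=\delta$ with $F_z=\delta-zD-\Phi_z$ and $\Phi_z$ obeying a bound of the form \eqref{eq:Pibd}; renaming $\Phi_z$ as $\Pi_z$, we may therefore assume Assumption~\ref{ass:diagram} throughout, and hence \eqref{eq:FGz_boot}--\eqref{eq:Pibd} and the impulse equation \eqref{eq:Glace}.

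The engine is a deconvolution lemma for $F_z*G_z=\delta$, $F_z=\delta-zD-\Pi_z$, to be proved by adapting the Fourier-analytic method of \cite{LS24a} (the $L^p$ theory of the Fourier transform with weak derivatives, the product and quotient rules of differentiation, and H\"older's inequality). Since \eqref{eq:Pibd} with $\rho>0$ gives $\Pi_z$ two finite moments, a Taylor expansion at the origin yields $\hat F_z(k)=\mu_z+v_z|k|^2+o(|k|^2)$ as $k\to0$, where $\mu_z=\hat F_z(0)\ge0$ and $v_z=\frac1{2d}\bigl(z\sigma^2+\sum_x|x|^2\Pi_z(x)\bigr)\asymp L^2$. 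Assuming also that $\hat F_z$ is real and strictly positive on $\T^d\setminus\{0\}$, the lemma should give: (a) the uniform-in-$z$ bound $G_z(x)\le Cv_z^{-1}\xvee^{-(d-2)}\le C'L^{-2}\xvee^{-(d-2)}$ for all $x$; and (b) in the massless case $\mu_z=0$, the asymptotic $G_z(x)=\frac{a_d}{2d\,v_z}\,|x|^{-(d-2)}+o(|x|^{-(d-2)})$ as $|x|\to\infty$. Statement (a) follows from (b) by comparison (a positive mass only decreases $\hat G_z$, hence $G_z$ pointwise, and a genuinely positive mass moreover renders $G_z$ summable), while (b) is the spread-out analogue of Hara's Gaussian Lemma, the constant encoding $\widehat{(-v\Delta)^{-1}}(k)=(v|k|^2)^{-1}$ together with $\frac{\Gamma((d-2)/2)}{4\pi^{d/2}}=\frac{a_d}{2d}$; Proposition~\ref{prop:so-nn} is the special case $z=1$, $\Pi_z\equiv0$, $\mu_z=0$.

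With the lemma available, I would run the bootstrap. By Assumption~\ref{ass:G}(ii)--(iv), $b$ is finite and continuous on $[1,z_c)$ with $b(1)\le1$; the crucial step is that $b(z)\le3$ implies $b(z)\le2$ once $L$ is large. Indeed, if $b(z)\le3$ then Assumption~\ref{ass:diagram} supplies \eqref{eq:FGz_boot}, whose Fourier transform at $k=0$ (legitimate since $G_z\in\ell^1$ for $z<z_c$) gives $\bigl(\sum_xG_z(x)\bigr)\bigl(1-z-\hat\Pi_z(0)\bigr)=1$, so $z-1<-\hat\Pi_z(0)\le\sum_x|\Pi_z(x)|\le CK_\Pi L^{-2+\eps}$, whence $3(z-1)\le1$ for $L$ large and, since $\hat F_z$ is real, continuous and nonvanishing on the connected torus (by $\hat F_z\hat G_z=1$) with $\hat F_z(0)>0$, also $\hat F_z>0$ on $\T^d$; the lemma then gives $G_z(x)\le C'L^{-2}\xvee^{-(d-2)}$, so $\sup_{x\ne0}G_z(x)/\bigl(K_SL^{-2+\eps}|x|^{-(d-2)}\bigr)\le(C'/K_S)L^{-\eps}\le1$, and therefore $b(z)\le1$. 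The usual bootstrap argument (the relatively closed disjoint sets $\{b\le2\}$ and $\{b\ge3\}$ cover $[1,z_c)$ by this implication, and the first contains $z=1$) then yields $b(z)\le2$ for all $z\in[1,z_c)$. Letting $z\uparrow z_c$ and using that $G_z(x)\uparrow G_{z_c}(x)$ by Assumption~\ref{ass:G}(iii) propagates $G_z(x)\le2K_SL^{-2+\eps}|x|^{-(d-2)}$ and $3(z-1)\le2$ to $z=z_c$, so $b(z_c)\le2$, Assumption~\ref{ass:diagram} applies at $z_c$, and we obtain $\Pi_{z_c}$, \eqref{eq:Pibd}, and the critical impulse equation $(\delta-z_cD-\Pi_{z_c})*G_{z_c}=\delta$. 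Since $\sum_xG_{z_c}(x)=\infty$ by Assumption~\ref{ass:G}(i), while $\mu_{z_c}>0$ would make $G_{z_c}$ summable and $\mu_{z_c}<0$ is incompatible with $G_{z_c}\ge0$ (it would force $\hat F_{z_c}$ to change sign), we must have $\mu_{z_c}=0$, i.e. $z_c=1-\hat\Pi_{z_c}(0)=1+O(L^{-2+\eps})$ by \eqref{eq:Pibd}. Part (b) of the lemma now gives $G_{z_c}(x)\sim\frac{a_d}{2d\,v_{z_c}}|x|^{-(d-2)}=\frac{\lambda_{z_c}}{\sigma^2}\frac{a_d}{|x|^{d-2}}$ with $\lambda_{z_c}:=\frac{\sigma^2}{2dv_{z_c}}=\bigl(z_c+\sigma^{-2}\sum_x|x|^2\Pi_{z_c}(x)\bigr)^{-1}$, and since $\sum_x|x|^2|\Pi_{z_c}(x)|=o(L^{-2+\eps})$ (by the $o(1)$ in \eqref{eq:Pibd} and $\rho>0$) and $\sigma^{-2}=O(L^{-2})$, this equals $(1+O(L^{-2+\eps}))^{-1}=1+O(L^{-2+\eps})$, which is \eqref{eq:Gcrit}.

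I expect the main obstacle to be the deconvolution lemma itself. In contrast to \cite{LS24a}, its error estimates must carry the $L$-dependence explicitly --- through $\sigma^2\asymp L^2$, the $L^{-2+\eps}$-smallness of $\Pi_z$, and its decay rate --- and must do so uniformly in $z$ and, for part (b), uniformly as the mass $\mu_z\downarrow0$, so that the remainder is genuinely $o$ of the leading term; it is exactly the meshing of this $L$-dependence with the factor $L^{-2+\eps}$ on the right-hand side of \eqref{eq:S1bd} that makes the bootstrap close, and this is the subtlety signalled just after Assumption~\ref{ass:diagram}. A secondary technical point, likewise to be absorbed into the weak-derivative $L^p$ framework, is the handling of the non-$\ell^1$ critical function $G_{z_c}$ --- in particular the positivity of $\hat F_{z_c}$ away from the origin and the identification $\hat F_{z_c}(0)=0$ --- together with, if one wishes to reach $d>2$, the verification of the extra hypothesis of Remark~\ref{rk:d2}.
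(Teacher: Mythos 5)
Your overall architecture coincides with the paper's: reduce Assumption~\ref{ass:diagram-h} to Assumption~\ref{ass:diagram} via Proposition~\ref{prop:general-1}, prove a Gaussian deconvolution statement with explicit, uniform $L$-dependence (this is the paper's Theorem~\ref{thm:so}), close the bootstrap, and pass to $z_c$ by monotone convergence; your identifications of $\lambda_{z_c}$ and of $\hat F_{z_c}(0)=0$ are also essentially the paper's. The one packaging difference is that the paper's deconvolution theorem isolates the leading term as $\lambda_z S_{\mu_z}(x)$, the spread-out random-walk Green function at $\mu_z=1-\lambda_z\hat F_z(0)$, and only afterwards converts $S_1$ into $a_d/(\sigma^2\xvee^{d-2})$ via Proposition~\ref{prop:so-nn}, whereas you aim directly for the continuum constant $a_d/(2d\,v_z)$.

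There are, however, two genuine gaps in the plan as written. First, your derivation of the uniform subcritical bound (a) from the massless asymptotics (b) by ``a positive mass only decreases $\hat G_z$, hence $G_z$ pointwise'' is invalid: pointwise domination of Fourier transforms does not transfer to pointwise domination of the functions (inversion is an oscillatory integral), and $F_z=\delta-zD-\Pi_z$ has a signed $\Pi_z$, so no termwise Neumann-series comparison is available either. This is not a removable detail, because the uniform-in-$z<z_c$ bound is exactly what drives the bootstrap. The paper avoids the issue by proving the deconvolution uniformly in the mass, with leading term $\lambda_z S_{\mu_z}$, and then invoking the genuine pointwise monotonicity $S_{\mu_z}(x)\le S_1(x)$, valid because $S_\mu=\sum_n\mu^nD^{*n}$ has non-negative terms. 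Second, your bootstrap closes via the claim $G_z(x)\le C'L^{-2}\xvee^{-(d-2)}$ with $C'$ independent of $L$, yielding a ratio $(C'/K_S)L^{-\eps}\le1$; but the available bound on $S_1$ is only $K_SL^{-(2-\eps)}\xvee^{-(d-2)}$ (the $\eps$-loss in \eqref{eq:S1bd} is present in Proposition~\ref{prop:so-nn} as proved), so no factor $L^{-\eps}$ can be gained this way. The bootstrap instead closes because the leading constant is $(1+O(\beta))K_SL^{-(2-\eps)}$ --- the \emph{same} $K_S$ that appears in the definition of $b$ --- times a factor close to $1$, plus an error $o(\beta)\abs x^{-(d-2)}$ that is strictly smaller; this is the ``meshing'' you correctly anticipate but do not implement. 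A smaller omission: you never use $d>4$, which the paper needs precisely to identify the non-summable $G_{z_c}$ with the Fourier integral $\tilde G_{z_c}$ (via $G_{z_c}\in\ell^2$ and the $L^2$ Fourier transform), a step you flag but leave open.
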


The constant $\lambda_{z_c}$ is
given explicitly in \eqref{eq:lambda-critical} in terms of $\Pi_{z_c}$.
Theorem~\ref{thm:Gcrit} is essentially proved in \cite{HHS03}, with an explicit error term.
Our contribution here is
to provide a different proof based on the simple
strategy of \cite{LS24a},
which replaces the more difficult and less conceptual analysis in \cite[p.358, items~(i) and (iv)]{HHS03}.

It is part of the proof of Theorem~\ref{thm:Gcrit}
that the bootstrap function obeys $b(z_c) \le 2$,
so that Assumption~\ref{ass:diagram} immediately verifies the hypotheses of \cite[Theorem~1.2]{LS24a}. That theorem then immediately
improves \eqref{eq:Gcrit} with an explicit error term:
\begin{align} \label{eq:Gcrit-error}
G\crit(x) =
\frac{ \lambdaSO\crit } { \sigma^2  } \frac {a_d }{\xvee^{d-2}}
+  \frac{O_L(1)}{\nnnorm x^{d-2+s}} ,
\end{align}
with any $s < \rho \wedge 2 \wedge (\rho - \frac{d-8}2)$.
In applications, there is generally
a sufficiently large but fixed value of $L$, and $L$-dependence of the error term
in \eqref{eq:Gcrit-error} is of no importance.
On the other hand, \cite[Theorem~1.2]{LS24a} cannot be applied {\it ab initio}, because
some control of $L$-dependence of the error term is needed in order to complete the bootstrap
argument used to prove Theorem~\ref{thm:Gcrit}.  This delicate point will
materialise below, in Theorem~\ref{thm:so} and in the proof of
Proposition~\ref{prop:boot}.

The verification of Assumption~\ref{ass:diagram}
or \ref{ass:diagram-h} is model-specific and requires $d > d_c$, where
\begin{equation}
    d_c =
    \begin{cases}
    4 & \text{(self-avoiding walk, Ising)}
    \\
    6 & \text{(percolation)}
    \\
    8 & \text{(lattice trees/animals)}.
    \end{cases}
\end{equation}
For each of these models,
Assumption~\ref{ass:diagram} or \ref{ass:diagram-h}
has been verified for large $L$
in \cite{HHS03, Saka07} with
\begin{equation}
\label{eq:rho-models}
    \rho =
    \begin{cases}
    2(d-4) & \text{(self-avoiding walk, Ising)}
    \\
    d-6 & \text{(percolation)}
    \\
    d-8 & \text{(lattice trees/animals)}.
    \end{cases}
\end{equation}
The requirement that $\rho > \frac{ d-8 } 2 \vee 0$ is satisfied in all cases.

\subsection{Organisation}

In Section~\ref{sec:proof-boot}, we prove our main result Theorem~\ref{thm:Gcrit}
subject to a general Gaussian deconvolution theorem (Theorem~\ref{thm:so}),
Proposition~\ref{prop:so-nn}, and Proposition~\ref{prop:general-1}. 
The proof of Theorem~\ref{thm:so} is given in
Section~\ref{sec:proof-so}, and the proof of Proposition~\ref{prop:so-nn} is given
in Appendix~\ref{sec:so-Green}. Both proofs use the simple strategy of \cite{LS24a}.
In Section~\ref{sec:general}, we introduce a novel way to reduce the
inhomogeneous lace expansion convolution equation \eqref{eq:Hhconv} for percolation, Ising model and lattice trees/animals,
to the self-avoiding walk equation~\eqref{eq:Glace}, thereby proving Proposition~\ref{prop:general-1}.
Finally, Appendix~\ref{app:D} provides a proof of some useful properties of
the transition probability $D$.

\section{Proof of Theorem~\ref{thm:Gcrit}}
\label{sec:proof-boot}

In Section~\ref{sec:so-deconv}, we state a general Gaussian deconvolution theorem (Theorem~\ref{thm:so}) in the spread-out setting.
The theorem involves an interval of $z$ values $[1,\infty)$
and functions $F_z:\Z^d \to \R$, and its statement concerns
the large-$x$ behaviour of the Fourier integral
\begin{equation} \label{eq:Gzint}
    \G_z(x) = \int_{\T^d} \frac{ e^{-ik\cdot x} }{\hat F_z(k)} \frac{dk}{(2\pi)^d}.
\end{equation}
Our assumption on $F_z$ is motivated by \eqref{eq:Pibd} and \eqref{eq:Glace}.
In Section~\ref{sec:boot},
we prove our main result Theorem~\ref{thm:Gcrit} using Theorem~\ref{thm:so} and a generic bootstrap argument.  Part of the proof involves verifying that under Assumptions~\ref{ass:G}
and \ref{ass:diagram},
$G_z$ is indeed equal to the Fourier integral $\G_z$.

\subsection{Gaussian deconvolution}
\label{sec:so-deconv}

The assumption on $F_z$ is the following.
It involves generic parameters $\beta_0,\beta_1$ in place of the
specific $L$-dependent choices made in \eqref{eq:Pibd}.
The \emph{a priori} bound $b(z)\le 3$
in Assumption~\ref{ass:diagram} is absent here,
but in Section~\ref{sec:boot} it will reappear.

\begin{assumption}
\label{ass:Fso}
Suppose that $D$ is given by Definition~\ref{def:D} 
and that $z \ge 1$.
We assume $F_z$ is given by $F_z = \delta - zD - \Pi_z$, where
$\Pi_z$ is a $\Z^d$-symmetric function that satisfies
\begin{align}
\label{eq:Pi_decay}
\abs{ \Pi_z(x) } \le \beta_0 \delta_{0,x} + \frac{ \beta_1 } { \xvee^{d+2+\rho}  }
\end{align}
with some $\rho > \frac{d-8}{2} \vee 0$ and $\beta = \beta_0 \vee \beta_1 \ge 0$ sufficiently small.
Suppose also that $\hat F_z(0) \ge 0$.
\end{assumption}

As we will see in \eqref{eq:IR-so}, Assumption~\ref{ass:Fso} implies an \emph{infrared bound} for $F_z$, namely that
there is a constant $K_{\rm IR}>0$ for which
\begin{align}
\label{eq:FIR}
\hat F_z(k) - \hat F_z(0) \ge K_{\rm IR} (L^2 \abs k^2 \wedge 1)  \qquad (k \in \T^d).
\end{align}
In dimensions $d >2$, the infrared bound implies absolute convergence of the Fourier integral
\eqref{eq:Gzint}.

Define
\begin{align} \label{eq:lambda_z-so}
\lambdaSO_z = \frac 1 { \hat F_z(0) - \sigma^{-2}\sum_{x\in\Z^d} \abs x^2 F_z(x) },	
\qquad \muSO_z = 1 - \lambdaSO_z \hat F_z(0).
\end{align}
By Assumption~\ref{ass:Fso}, we have $\hat F_z(0) =1 - z - \hat \Pi_z(0) \ge 0$.  Since also $z \ge 1$,
this implies that
\begin{equation}
\label{eq:zbd}
    1 \le z \le  1 -\hat\Pi_z(0) \le 1+O(\beta).
\end{equation}
It follows that
$0 \le \hat F_z(0) \le  O(\beta)$, and hence
\begin{align}
\label{eq:lambda1}
    \lambdaSO_z
    & = \frac{1}{ \hat F_z(0) + z + \sigma^{-2}\sum_{x\in\Z^d} |x|^2 \Pi_z(x)}
    =    \frac{1}{1+O(\beta) + O(\beta\sigma^{-2})}
    = 1+ O(\beta).
\end{align}
Since $0 \le \lambdaSO_z\hat F_z(0) \le O(\beta)$, we have $\muSO_z \in [1-O(\beta),1]$, so it makes sense to write $S_{\muSO_z}$ (recall \eqref{eq:S}).
We also have $\muSO_z \ge 1/2$ because $\beta$ is small.
Given $\rho > \frac{d-8}{2} \vee 0$, we define
\begin{align} \label{eq:nddef}
n_d = \begin{cases}
    d-2 & (\rho \le 1 + (\frac{d-8}{2} \vee 0) )     \\
    d-1 & (\rho > 1+  (\frac{d-8}{2} \vee 0)).
    \end{cases}
\end{align}

\begin{theorem}[Gaussian deconvolution]
\label{thm:so}
Let $d>2$ and let $F_z$ satisfy Assumption~\ref{ass:Fso}.
Then there exists a constant $c>0$ such that the
Fourier integral $\G_z$ of \eqref{eq:Gzint} satisfies
\begin{align} \label{eq:Gso}
\G_z(x) = \lambdaSO_z S_{\muSO_z}(x) + \begin{cases}
	O(\beta) 	& (x=0) \\
	O\( \frac{ \beta (L^{-c} + \beta) + \beta_1 }{ \abs x^{n_d} } \) & (x\ne 0),
\end{cases}
\end{align}
with the constants in the error term independent of $z, \beta_0, \beta_1, L$.
Moreover, for fixed $z, \beta_0, \beta_1, L$, the error term in \eqref{eq:Gso} can be replaced by $o(\abs x^{-n_d} )$ as $\abs x \to \infty$.
\end{theorem}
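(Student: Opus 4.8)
The target is Theorem~\ref{thm:so}: a deconvolution statement that the Fourier integral $\G_z$ of $1/\hat F_z$, where $F_z = \delta - zD - \Pi_z$ with small $\Pi_z$, agrees with $\lambdaSO_z S_{\muSO_z}$ up to an explicit error. I would follow the strategy of \cite{LS24a}: write $1/\hat F_z$ as $\lambdaSO_z/(1 - \muSO_z \hat D)$ plus a remainder, and show the remainder's inverse Fourier transform decays like $\abs x^{-n_d}$ by controlling weak derivatives of the remainder in $L^p(\T^d)$ and invoking the Fourier $L^p$--decay machinery from \cite{LS24a}. Concretely, I would set $\hat F_z(k) = \hat F_z(0) + [\hat F_z(k) - \hat F_z(0)]$ and compare with the random-walk denominator $1 - \muSO_z\hat D(k) = \lambdaSO_z\inv[\hat F_z(0) + \sigma^{-2}|\widehat{|\cdot|^2 F_z}|(0) - \text{(the }k^2\text{ part)}]$ — i.e. match the constant term and the quadratic-in-$k$ term exactly, which is precisely what the definitions \eqref{eq:lambda_z-so} of $\lambdaSO_z,\muSO_z$ arrange. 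The difference
\[
    \frac{1}{\hat F_z(k)} - \frac{\lambdaSO_z}{1 - \muSO_z \hat D(k)}
    = \frac{\lambdaSO_z\big[\,\hat F_z(0)(1-\hat D(k)) - (\hat F_z(k)-\hat F_z(0)) + \cdots\,\big]}{\hat F_z(k)\,(1-\muSO_z\hat D(k))}
\]
has a numerator that vanishes to order $|k|^{2+\min\{2,\rho\}}$-ish near $k=0$ (using $\Z^d$-symmetry to kill the linear and cubic terms and the decay \eqref{eq:Pi_decay} of $\Pi_z$ to control the Taylor remainder of $\hat\Pi_z$), while both denominator factors are bounded below by $L^2|k|^2\wedge 1$ via the infrared bound \eqref{eq:FIR}. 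So the remainder is of size $\beta$ near $k=0$ and small uniformly, and the point is to differentiate it $n_d$ times and bound the result in $L^1$ or $L^p$.

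\textbf{Key steps, in order.} First, establish the infrared bound \eqref{eq:FIR} from Assumption~\ref{ass:Fso} (this is flagged as \eqref{eq:IR-so}), using $1-\hat D(k) \asymp L^2|k|^2 \wedge 1$ — a property of spread-out $D$ proved in Appendix~\ref{app:D} — together with smallness of $\hat\Pi_z$ and its second difference. Second, perform the algebraic decomposition above and identify the remainder $\hat R_z(k)$, isolating its behaviour near $k=0$: show $\hat R_z(k) = O(\beta)$ pointwise, and more precisely that it is a product/quotient built from $\hat\Pi_z$, $\hat D$, $\hat F_z$. Third — the analytic heart — control the weak derivatives $\del^\alpha \hat R_z$ for $|\alpha| \le n_d$. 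Here I would use the product and quotient rules for weak derivatives established in \cite{LS24a}, reducing everything to bounds on derivatives of $\hat D$ (known, with the right $L$-powers, from Appendix~\ref{app:D}), of $\hat F_z$ and $1/\hat F_z$ (via the infrared bound in the denominator), and of $\hat\Pi_z$. For $\hat\Pi_z$, the decay rate $\xvee^{-(d+2+\rho)}$ of $\Pi_z$ in \eqref{eq:Pi_decay} translates, by standard Fourier/Sobolev correspondence, into $\hat\Pi_z \in C^{n_d}$-type control with the number of usable derivatives governed precisely by the dichotomy in the definition \eqref{eq:nddef} of $n_d$ (two extra derivatives available, hence $n_d = d-1$, exactly when $\rho$ exceeds the threshold $1 + (\frac{d-8}2\vee 0)$). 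Fourth, apply the $L^p$-to-decay theorem of \cite{LS24a}: $\abs x^{n_d} R_z(x)$ is (up to constants) the inverse transform of $\del^{n_d}$-type combinations of $\hat R_z$, so an $L^p(\T^d)$ bound on those derivatives gives $R_z(x) = O(\text{bound}/\abs x^{n_d})$; tracking which terms carry a $\beta_1$, which carry $\beta^2$, and which carry $\beta L^{-c}$ (the last from the $\hat D$-derivatives losing powers of $L$ relative to the gain in \eqref{eq:FIR}) yields the stated error $O(\beta(L^{-c}+\beta) + \beta_1)$. Fifth, handle $x=0$ separately: $\G_z(0) - \lambdaSO_z S_{\muSO_z}(0) = \int_{\T^d}\hat R_z\,\frac{dk}{(2\pi)^d} = O(\beta)$ directly from the pointwise bound. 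Sixth, for the $o(\abs x^{-n_d})$ refinement at fixed parameters, invoke the Riemann--Lebesgue-type improvement from \cite{LS24a}: once the $n_d$-th derivative combinations are in $L^p$, their transforms tend to $0$, upgrading $O$ to $o$.

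\textbf{Main obstacle.} The delicate part is Step three — getting the weak derivatives of the remainder to the order $n_d$ with the \emph{correct} $L$-dependence, not merely finiteness. Each derivative hitting $\hat D$ costs a factor of $L$ (since $\hat D(k)$ varies on scale $|k|\sim 1/L$), while the infrared bound $\hat F_z(k) - \hat F_z(0) \gtrsim L^2|k|^2 \wedge 1$ supplies a compensating $L^2$ in the denominator; the bookkeeping that these balance to leave $\beta L^{-c}$ rather than a positive power of $L$ is where the spread-out setting genuinely differs from \cite{LS24a}, and it must be done carefully because this error estimate is exactly what later has to mesh with \eqref{eq:S1bd} in the bootstrap (Proposition~\ref{prop:boot}). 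A secondary subtlety is justifying that the formal quotient-rule manipulations are legitimate at the level of weak (not classical) derivatives near $k=0$, where $\hat F_z(0)$ may vanish; this is handled by the $L^p$ weak-derivative calculus of \cite{LS24a} provided one checks the requisite integrability of the factors, which the infrared bound guarantees in $d>2$.
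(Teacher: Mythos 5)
Your proposal is correct and follows essentially the same route as the paper: the remainder you identify is exactly $\hat E/(\hat A\hat F)$ with $\hat E=\hat A_{\mu_z}-\lambda_z\hat F_z$, whose zeroth and second moments vanish by the choice \eqref{eq:lambda_z-so}, and the paper likewise bounds its weak derivatives up to order $n_d$ in $L^q(\T^d)$ via the infrared bounds, H\"older, and the product/quotient rules, then converts to $\xvee^{-n_d}$ decay with the $L^1$-derivative lemma of \cite{LS24a}. Your accounting of where the $\beta L^{-c}$, $\beta^2$, and $\beta_1$ contributions arise, and of the $x=0$ and $o(\abs{x}^{-n_d})$ cases, matches the paper's argument.
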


Theorem~\ref{thm:so} is related to \cite[Theorem~1.2]{HHS03}.
The critical case of Theorem~\ref{thm:so} can be inferred from the proof of \cite[(1.13)]{HHS03},
with $\beta$ taken to be of order $L^{-2+\eps}$,
$\eps \in (0, \frac{\rho\wedge 1}{2})$ (as in the proof
of \cite[Proposition~2.2]{HHS03}),
and with the error term in \eqref{eq:Gso} replaced by
\begin{equation}
    \frac{1}{L^{2-(\rho \wedge 2)}\xvee^{d-2+s}}
    =
    \frac{\beta L^{(\rho \wedge 2) -\eps }}{\xvee^{d-2+s}}
    \qquad (0 \le s  < \rho \wedge 2).
\end{equation}
For $s=n_d-(d-2)$ we have a smaller error term.
Our assumption that $\rho > \frac{d-8}{2}$ is not imposed in \cite{HHS03}, but this assumption is satisfied in all known applications.
Our proof of Theorem~\ref{thm:so} follows the method of \cite{LS24a}.
It is completely different from the analysis used
in \cite{HHS03}, and is simpler technically and conceptually.
By applying the fractional derivative method of \cite[Section~2.3]{LS24a}, the error term in \eqref{eq:Gso} can be replaced by $O(  \beta  \xvee^{-(d-2+s)} )$ for any $s < \rho \wedge 2 \wedge (\rho - \frac{ d-8 }2) $, with the constant independent of $z, \beta_0, \beta_1, L$.
We omit such improvement as we do not need it for the proof of Theorem~\ref{thm:Gcrit}.

\subsection{Proof of Theorem~\ref{thm:Gcrit}: bootstrap argument}
\label{sec:boot}

We now prove Theorem~\ref{thm:Gcrit} assuming Proposition~\ref{prop:so-nn},
Proposition~\ref{prop:general-1}, and Theorem~\ref{thm:so}.
The following proposition is the core of the bootstrap argument.

\begin{proposition}[Bootstrap]
\label{prop:boot}
Let $d > 2$.
Under Assumptions~\ref{ass:G} and \ref{ass:diagram},
there is an $L_2$ such that for all $L\ge L_2$, if
$z \in [1,z_c)$ and $b(z) \le 3$ then $b(z) \le 2$.
\end{proposition}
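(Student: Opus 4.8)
The plan is to run the standard lace-expansion bootstrap, using Theorem~\ref{thm:so} (together with Proposition~\ref{prop:general-1} when Assumption~\ref{ass:diagram-h} is in force rather than Assumption~\ref{ass:diagram}) to convert the hypothesis $b(z) \le 3$ into the strictly better conclusion $b(z) \le 2$. Fix $z \in [1,z_c)$ with $b(z) \le 3$. Under Assumption~\ref{ass:diagram} this gives a $\Z^d$-symmetric $\Pi_z$ satisfying the impulse equation \eqref{eq:Glace} and the bound \eqref{eq:Pibd}, so with $F_z = \delta - zD - \Pi_z$ we are exactly in the setting of Assumption~\ref{ass:Fso}, with the generic parameters taken to be $\beta_0 = \beta_1 = O(L^{-2+\eps})$ (absorbing the $o(1)$ from \eqref{eq:Pibd}). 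The smallness of $\beta$ needed in Assumption~\ref{ass:Fso} is arranged by taking $L \ge L_2$ large. First I would check that $\hat F_z(0) \ge 0$: since $b(z) \le 3$ forces $3(z-1) \le 3$, i.e.\ $z \le 2$, and more to the point the susceptibility $\sum_x G_z(x)$ is finite for $z<z_c$ while $\hat G_z(0) = 1/\hat F_z(0)$ from \eqref{eq:Glace}, positivity and finiteness of $\hat G_z(0)$ give $\hat F_z(0) > 0$; this is also where one uses that $G_z$ equals the Fourier integral $\tilde G_z$ of \eqref{eq:Gzint}, a point the surrounding text flags as part of the proof of Theorem~\ref{thm:Gcrit}.

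With Assumption~\ref{ass:Fso} verified, Theorem~\ref{thm:so} yields
\[
    G_z(x) = \lambdaSO_z S_{\muSO_z}(x) + O\!\Big( \frac{\beta(L^{-c} + \beta) + \beta_1}{|x|^{n_d}} \Big)
    \qquad (x \ne 0),
\]
with $\lambdaSO_z = 1 + O(\beta)$. Next I would bound the main term. Since $\muSO_z \in [1-O(\beta),1]$ and $S_\mu$ is monotone in $\mu$, we have $S_{\muSO_z}(x) \le S_1(x)$, and then \eqref{eq:S1bd} gives $S_{\muSO_z}(x) \le \delta_{0,x} + K_S L^{-2+\eps'}|x|^{-(d-2)}$ for $x \ne 0$ (choosing the $\eps'$ in \eqref{eq:S1bd} a touch smaller than the $\eps$ used to define $b$, so there is room to spare). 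The error term is $O(L^{-2+\eps}(L^{-c} + L^{-2+\eps}) + L^{-2+\eps}) \cdot |x|^{-n_d}$; since $n_d \ge d-2$ and $|x| \ge 1$, this is bounded by $o(1) \cdot L^{-2+\eps}|x|^{-(d-2)}$ as $L \to \infty$. Combining, for $x \ne 0$,
\[
    \frac{G_z(x)}{K_S L^{-2+\eps}|x|^{-(d-2)}} \le \lambdaSO_z \, \frac{L^{-2+\eps'}}{L^{-2+\eps}} + o(1) = (1+O(\beta))\,L^{\eps'-\eps} + o(1) ,
\]
which is $\le 2$ once $L$ is large enough (here $\eps' < \eps$ makes $L^{\eps'-\eps} \to 0$, so in fact the supremum term is $\le 1+o(1)$). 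Together with $3(z-1) \le 3 \cdot 1 < 2$? — no: $3(z-1)$ could be as large as $3$ under the hypothesis, so this needs the separate observation that $b(z) \le 3$ plus continuity does not on its own bound $3(z-1)$ below $2$. I would instead argue, as in \cite{HHS03}, that the first (supremum) term being $\le 1+o(1) < 2$ already prevents $b(z)$ from equalling $3$, and combined with $b(1) \le 1$, continuity of $b$ on $[1,z_c)$, and the fact that $b$ cannot take values in the open interval $(2,3)$ (the supremum term stays $\le 1+o(1)$ and the term $3(z-1)$ is continuous and increasing from $0$), one concludes $b(z) \le 2$ throughout. This "no middle ground" or "gap" structure — that whenever $b(z) \le 3$ one actually gets $b(z) \le 2$ — is precisely what a connectedness argument on the continuous function $b$ turns into the global bound.

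The main obstacle I anticipate is making the $L$-dependence mesh: the bound on the supremum term involves $\lambdaSO_z L^{\eps'-\eps}$ where $\eps'$ is the exponent in \eqref{eq:S1bd} and $\eps$ is the exponent fixed in the definition of $b$, and one must be careful that these can be chosen with $\eps' < \eps$ while keeping $\eps$ small enough that Assumption~\ref{ass:diagram} and Theorem~\ref{thm:so} apply (the text notes $\eps \in (0, \tfrac{\rho \wedge 1}{2})$ is the relevant range). The error term in Theorem~\ref{thm:so} carries a factor $\beta_1 = O(L^{-2+\eps})$ with no extra decay, so it is crucial that it is divided by $|x|^{n_d}$ with $n_d \ge d-2$ and that the coefficient $K_S$ in the denominator of $b$ is the \emph{same} constant $K_S$ from \eqref{eq:S1bd} — the error must be $o(1)$ relative to $K_S L^{-2+\eps}|x|^{-(d-2)}$, not merely $O(1)$. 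A secondary point requiring care is handling the $3(z-1)$ branch of $b$ and the behaviour as $z \uparrow z_c$; here $z_c \le 2$ is not automatic from the present hypotheses, so the argument must genuinely be the standard forbidden-region continuity argument rather than a pointwise estimate, and one should state it as: the set $\{z \in [1,z_c) : b(z) \le 2\}$ is nonempty (contains $1$), closed in $[1,z_c)$ (by what we proved, since $b \le 3$ on a neighbourhood of any point where $b = 2$ and then $b \le 2$ there), and open in $[1,z_c)$ (by continuity), hence all of $[1,z_c)$.
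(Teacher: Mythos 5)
Your skeleton is the paper's: verify Assumption~\ref{ass:Fso} for $F_z=\delta-zD-\Pi_z$ (with $\hat F_z(0)=1/\hat G_z(0)\ge 0$ from summability), apply Theorem~\ref{thm:so}, and dominate $\lambda_z S_{\mu_z}(x)$ by $S_1(x)\le K_SL^{-(2-\eps)}|x|^{-(d-2)}$. (The device of using \eqref{eq:S1bd} with a smaller $\eps'<\eps$ is unnecessary --- the paper just uses the same $\eps$ and absorbs the factor $1+O(\beta)+o(1)$ into the $2$ --- but it is harmless.) However, there are two genuine gaps.

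First, the error term. You set $\beta_1=O(L^{-2+\eps})$, ``absorbing the $o(1)$ from \eqref{eq:Pibd}'', and then assert that $\bigl(\beta(L^{-c}+\beta)+\beta_1\bigr)|x|^{-n_d}$ is $o(1)\cdot L^{-2+\eps}|x|^{-(d-2)}$. That does not follow: at $|x|=1$ the contribution $\beta_1|x|^{-n_d}$ equals $\beta_1=O(L^{-2+\eps})$, which is $O(1)$ --- not $o(1)$ --- relative to the denominator $K_SL^{-2+\eps}|x|^{-(d-2)}$ of the bootstrap function, and with an implicit constant not controlled by $K_S$; the inequality $n_d\ge d-2$ gives no gain at bounded $|x|$ (and $n_d=d-2$ anyway for, e.g., percolation in dimensions $6<d\le 7$). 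Your closing paragraph correctly senses that the error must be $o(1)$ relative to the main term, but the mechanisms you invoke there do not produce it. The resolution is exactly the factor you discarded: \eqref{eq:Pibd} gives $\beta_1=o(L^{-2+\eps})$ uniformly in $z$, and carrying this through Theorem~\ref{thm:so} makes the entire error $o(L^{-2+\eps})|x|^{-(d-2)}$. This is the ``meshing of $L$-dependences'' that the paper flags as the key subtlety absent from \cite{LS24a}; it must be kept, not absorbed into an $O$.

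Second, the $3(z-1)$ branch. Proposition~\ref{prop:boot} is a pointwise implication at a fixed $z$, so continuity of $b$ and the forbidden-interval/connectedness argument cannot be used inside its proof; they belong to the subsequent deduction of $b\le 2$ on all of $[1,z_c)$ in the proof of Theorem~\ref{thm:Gcrit}. Moreover your claim that ``$b$ cannot take values in $(2,3)$'' is circular as stated: a priori $b(z)$ can lie in $(2,3]$ through the $3(z-1)$ branch alone, namely for any $z\in(5/3,2]$, and bounding the supremum term by $1+o(1)$ does nothing to exclude this. The correct step is \eqref{eq:zbd}: the hypothesis $b(z)\le 3$ activates Assumption~\ref{ass:diagram}, and then $\hat F_z(0)\ge 0$ together with $\hat D(0)=1$ gives $z\le 1-\hat\Pi_z(0)\le 1+O(L^{-2+\eps})$, whence $3(z-1)=O(L^{-2+\eps})\le 2$ for $L$ large, with no continuity needed.
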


\begin{proof}
Suppose $z \in [1,z_c)$ satisfies $b(z)\le 3$.
By Assumptions~\ref{ass:diagram},
we have $ F_z * G_z = \delta$ with $F_z = \delta - zD - \Pi_z$.
We first verify Assumption~\ref{ass:Fso} for $F_z$:
\eqref{eq:Pi_decay} holds with
$\beta=\beta_0$ proportional to $L^{-2+\eps}$ and $\beta_1 = o(L^{-2+\eps})$ by \eqref{eq:Pibd}, and
\begin{align} \label{eq:Fz0}
\hat F_z(0) = \frac 1 {\hat G_z(0) } = \frac 1 { \sum_{x\in\Z^d} G_z(x) } \ge 0,
\end{align}
since $G_z$ is non-negative and summable when $z < z_c$.

We now use Theorem~\ref{thm:so}.
Since $G_z$ satisfies $F_z * G_z = \delta$ and both $F_z$ and $G_z$ are summable,
we have $\hat F_z(k)\hat G_z(k)=1$ and hence
$G_z$ is equal to the Fourier
integral $\G_z$ of \eqref{eq:Gzint}.
With $\lambdaSO_z = 1+O(\beta)$ from \eqref{eq:lambda1}, Theorem~\ref{thm:so} gives
\begin{align}
\label{eq:GCasy}
G_z(x) = (1+O(\beta) ) S_{\muSO_z}(x)
+ O\( \frac{ o(\beta) }{ \abs x^{d-2} } \)
\qquad (x\ne 0),
\end{align}
where the constant is independent of $z,\beta,L$.
By Proposition~\ref{prop:so-nn},
\begin{align}
\label{eq:GCbd}
S_{\muSO_z}(x) \le S_1(x)  \le \frac{ K_S }{ L^{2-\eps}\abs x^{d-2} }
	\qquad (x\ne 0).
\end{align}
Combined with $\beta = \const L^{-2 + \eps}$, we get
\begin{equation}
G_z(x)
\le \( 1+O(\beta) + o(1) \)
\frac{ K_S }{ L^{2-\eps}\abs x^{d-2} }
\le 2 \frac{ K_S }{ L^{2-\eps}\abs x^{d-2} }
	\qquad 	(x \neq 0),
\end{equation}
for sufficiently large $L$ (we emphasise that $L$ is taken large here
independently of $z$).
Also, since $z = 1 + O(\beta)$, we have $3(z-1) = O(\beta) \le 2$.
This proves that $b(z) \le 2$
for sufficiently large $L$.
\end{proof}

\begin{proof}[Proof of Theorem~\ref{thm:Gcrit}]
By Proposition~\ref{prop:general-1}, it suffices to consider the
impulse equation \eqref{eq:Glace},
so we work under Assumption~\ref{ass:diagram}.

By Proposition~\ref{prop:boot} and continuity of the function $b$, the interval $(2,3]$ is forbidden for values of $b(z)$ when $z\in[1, z_c)$.
Since $b(1) \le 1$ by Assumption~\ref{ass:G}(iv), we must have $b(z) \le 2$ for all $z \in [1, z_c)$.
It then follows from Assumption~\ref{ass:G}(iii) that
\begin{align}
\label{eq:Gcrit_bdd}
G\crit(x) = \lim_{z \to z_c^-} G_z(x) \le 2 \frac{ K_S }{ L^{2-\eps}\abs x^{d-2} }
	\qquad (x\ne 0).
\end{align}
The bound \eqref{eq:Gcrit_bdd} implies that $b(z_c) \le 2$, so Assumption~\ref{ass:diagram}
gives a critical $F\crit = \delta - z_c D - \Pi\crit$
with $\Pi\crit$ obeying \eqref{eq:Pibd}.
By monotone convergence, we can take
the $z\to z_c^-$ limit of \eqref{eq:Fz0} to see that $\hat F_{z_c}(0) = 0$, so now Assumption~\ref{ass:Fso} is verified at $z=z_c$.

For $z<z_c$,
it follows from $b(z) \le 2$ and $G_z$ being summable that $G_z = \G_z$,
as in the proof of Proposition~\ref{prop:boot}.
To prove that also $G\crit = \G\crit$,
by the $L^2$ Fourier transform it suffices to show that $G\crit \in \ell^2(\Z^d)$.
This follows from $d>4$, \eqref{eq:Gcrit_bdd}, and the fact that $G\crit(0) < \infty$.
The latter is a consequence of the $x=0$ case of Theorem~\ref{thm:so} applied to $G_z$ with $z<z_c$,
together with \eqref{eq:lambda1} and the bound on $S_1(0)$ from \eqref{eq:S1bd}:
\begin{align}
G\crit(0) = \lim_{z\to z_c^-} G_z(0)
\le (1 + O(\beta) ) S_1(0) + O(\beta) \le 1+O(\beta).
\end{align}

Since $\hat F_{z_c}(0) = 0$, we see from \eqref{eq:lambda1} that $\mu_{z_c}=1$.
Therefore, by Theorem~\ref{thm:so},  Proposition~\ref{prop:so-nn}, and \eqref{eq:C1_asymp}, we have
\begin{align}
G\crit(x) \sim \lambdaSO\crit S_1(x)
\sim \frac{ \lambdaSO\crit }{ \sigma^2 }C_1(x)
\sim \frac{ \lambdaSO\crit }{ \sigma^2 } \frac{ a_d }{ \abs x^{d-2} }
\qquad \text{as $\abs x \to \infty$},
\end{align}
which is the desired result.
By \eqref{eq:lambda1}, $\lambdaSO\crit = 1+O(\beta)$, and $\lambda_{z_c}$ is
given explicitly in terms of $\Pi_{z_c}$ as
\begin{equation}
\label{eq:lambda-critical}
    \lambdaSO_{z_c} =
    \frac{1}{z_c+\sigma^{-2}\sum_{x\in \Z^d} \abs x^2 \Pi_{z_c}(x)}.
\end{equation}

This completes the proof of Theorem~\ref{thm:Gcrit},
subject to Proposition~\ref{prop:so-nn},
Proposition~\ref{prop:general-1}, and Theorem~\ref{thm:so}.
\end{proof}

\begin{remark}
\label{rk:d2}
The assumption that $d>4$ in Theorem~\ref{thm:Gcrit} is used
only to justify that $G\crit(x)$ is equal to the Fourier integral $\G\crit(x)$
of \eqref{eq:Gzint} (as mentioned in the previous proof, we know that $G_z=\G_z$ for $z <z_c$).
If we assume, in addition to Assumption~\ref{ass:diagram}, that $\Pi_z(x)$ is left-continuous at $z=z_c$ for all $x$, then we can relax to all $d>2$, since then
the equality $G\crit(x) =\G\crit(x)$ follows from
the infrared bound \eqref{eq:FIR} together with the dominated convergence theorem to
take the limit $z \to z_c^-$ in \eqref{eq:Gzint}.
This additional continuity assumption can be verified in practice
(see \cite[Appendix~A]{Hara08}), but we do not
comment further since all our applications have $d>4$.
\end{remark}

\section{Proof of deconvolution Theorem~\ref{thm:so}}
\label{sec:proof-so}

We follow the strategy of \cite[Sections~2.2]{LS24a}, but
additional care is
required to track dependence on $\beta_0,\beta_1,L$.

\subsection{Fourier analysis}

For a function $g : \T^d \to \C$
and $p \in [1,\infty)$, we write
\begin{equation}
    \norm g _p^p = \int_{\T^d} |g(k)|^p \frac{dk}{(2\pi)^d}
\end{equation}
for the $L^p(\T^d)$ norm,
and we write $\|g\|_\infty$ for the supremum norm.

We first isolate the leading decay of $\G_z$.
Suppose $F_z$ obeys Assumption~\ref{ass:Fso}.
For $\mu\in(0,1]$, define $A_\mu = \delta - \mu D$,
so that $A_\mu * S_\mu = \delta$ by \eqref{eq:S}.
For any $\lambda\in \R$, we write
\begin{align} \label{eq:GC}
\G_z
&= \lambda S_\mu + \delta * \G_z - \lambda S_\mu * \delta	\nl
&= \lambda S_\mu + (S_\mu * A_\mu) * \G_z - \lambda S_\mu * (F_z * \G_z)	\nl
&= \lambda S_\mu + S_\mu * E_{z,\lambda,\mu} * \G_z,
\end{align}
with
\begin{equation}
\label{eq:Edef}
    E_{z, \lambda,\mu} = A_\mu - \lambda F_z.
\end{equation}
The choice of $\lambda_z, \mu_z$ in \eqref{eq:lambda_z-so}
has been made to ensure that
\begin{align} \label{eq:E_condition}
\sum_{x\in\Z^d} E_{z,\lambda_z,\muz}(x)
= \sum_{x\in\Z^d} \abs x^2 E_{z,\lambda_z,\muz}(x)
= 0.	
\end{align}
Indeed, \eqref{eq:E_condition} is a system of two linear equations in $\lambda, \mu$, with solution given by \eqref{eq:lambda_z-so}.
This isolates the leading term $\lambda_z S_\muz$:
\begin{align}  \label{eq:G_isolate}
\G_z &= \lambda_z S_\muz + f_z,
\qquad
f_z =  S_\muz * E_{z,\lambda_z,\muz} * \G_z.
\end{align}
We only use \eqref{eq:G_isolate} in its Fourier version, namely
\begin{align}
\label{eq:G_isolate_k}
\frac 1 { \hat F_z }
&= \frac {\lambda_z} { \hat A_\mu } + \hat f_z,
\qquad
\hat f_z  =  \frac{\hat E_{z,\lambda_z,\mu_z} }{ \hat A_{\mu_z} \hat F_z },
\qquad
\hat E_{z,\lambda_z,\mu_z} = \hat A_{\mu_z} - \lambda_z \hat F_z.
\end{align}

To simplify the notation, we will usually omit subscripts $z, \lambda_z, \muz$, and
use subscripts to denote partial derivatives instead, e.g., $\hat E_\alpha = \nabla^\alpha \hat E_{z, \lambda_z, \muz}$ for a multi-index $\alpha$.
The proof of Theorem~\ref{thm:so} replies on a classical fact about
the Fourier transform: smoothness of a function on $\T^d$ is related to the decay of its Fourier coefficient.  Concretely, we have the following lemma, which repeats \cite[Lemma~2.3]{LS24a} (an elementary proof is given in \cite{LS24a}).
Necessary properties of weak derivatives
are summarised in \cite[Appendix~A]{LS24a}.

\begin{lemma}
\label{lem:Graf}
Let $a,d>0$ be positive integers and let $h : \Z^d \to \R$.
There is a constant $c_{d,a}$, depending only on the dimension $d$ and the
maximal order $a$ of differentiation,
such that if the weak derivative $\hat h_\alpha$ is in $L^1(\T^d)$
for all multi-indices $\alpha$ with $|\alpha| \le a$ then
\begin{equation}
\label{eq:Graf}
    |h(x)|
    \le
    c_{d,a} \frac{1}{\xvee^a}
    \max_{|\alpha| \in \{0,a\}}\|\hat h_\alpha\|_1 .
\end{equation}
Moreover, $|x|^a h(x) \to 0$ as $|x|\to\infty$.
\end{lemma}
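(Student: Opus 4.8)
The plan is to exploit the duality between decay of $h(x)$ and integrability of the weak derivatives of $\hat h$, via integration by parts on the torus. First I would recall that since $\hat h_\alpha \in L^1(\T^d)$ for all $|\alpha| \le a$, in particular $\hat h = \hat h_0 \in L^1$, so the Fourier inversion formula $h(x) = \int_{\T^d} \hat h(k) e^{-ik\cdot x} \frac{dk}{(2\pi)^d}$ is valid and $h$ is well-defined (and bounded, giving the $\xvee^{-a}$ bound trivially for $|x| \le 1$). For $|x| \ge 1$, fix a coordinate direction $j$ with $|x_j| = \|x\|_\infty \ge |x|/\sqrt d$, so that $|x_j| \gtrsim |x|$. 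The key identity is that $e^{-ik\cdot x} = (ix_j)^{-a} \partial_{k_j}^a e^{-ik\cdot x}$, and integrating by parts $a$ times in the $k_j$ variable (the boundary terms vanish by periodicity on $\T^d$, and the manipulation is justified for weak derivatives by the properties collected in \cite[Appendix~A]{LS24a}) moves all $a$ derivatives onto $\hat h$:
\begin{equation}
    h(x) = \frac{1}{(-ix_j)^a} \int_{\T^d} (\partial_{k_j}^a \hat h)(k)\, e^{-ik\cdot x} \frac{dk}{(2\pi)^d}.
\end{equation}
Here $\partial_{k_j}^a \hat h = \hat h_\alpha$ with $\alpha = a e_j$, which has $|\alpha| = a$. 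Taking absolute values and bounding $|e^{-ik\cdot x}| = 1$ yields $|h(x)| \le |x_j|^{-a} \|\hat h_{a e_j}\|_1 \le d^{a/2} |x|^{-a} \max_{|\alpha|=a} \|\hat h_\alpha\|_1$. Combining the $|x| \le 1$ and $|x| \ge 1$ cases gives \eqref{eq:Graf} with a constant $c_{d,a}$ depending only on $d$ and $a$, as claimed.

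For the final assertion that $|x|^a h(x) \to 0$, I would note that the displayed formula shows $|x|^a h(x)$ is (up to the bounded factor $(|x|/x_j)^a$, which is harmless after passing to a subsequence realizing the max-norm coordinate) essentially the Fourier coefficient of $\hat h_{a e_j} \in L^1(\T^d)$; a direct appeal to the Riemann--Lebesgue lemma gives that this Fourier coefficient tends to $0$ as $|x| \to \infty$. Slightly more carefully, for each $x$ one picks the maximizing coordinate $j = j(x)$, but since there are only $d$ choices and each of the $d$ functions $\hat h_{a e_j}$ is in $L^1$, Riemann--Lebesgue applied to each handles all cases uniformly and $\max_j |(\widehat{\hat h_{a e_j}})(x)| \to 0$.

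I do not expect a serious obstacle here: this is a standard Fourier-analytic fact. The only point requiring genuine care is justifying the repeated integration by parts when $\hat h$ has merely weak (rather than classical) derivatives — one must use that the weak derivatives are iterated correctly and that the periodicity of $\T^d$ kills the boundary contributions — but this bookkeeping is exactly what is packaged in \cite[Appendix~A]{LS24a}, so I would simply cite it. Since the lemma is quoted verbatim from \cite[Lemma~2.3]{LS24a}, the cleanest route is in fact to refer to the elementary proof given there, and the sketch above indicates its content.
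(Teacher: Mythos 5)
Your proposal is correct and is essentially the argument behind the cited result: the paper itself gives no proof of Lemma~\ref{lem:Graf}, quoting it verbatim from \cite[Lemma~2.3]{LS24a}, and the elementary proof there is exactly your integration-by-parts identity $h(x)=(ix_j)^{-a}\int_{\T^d}\hat h_{ae_j}(k)e^{-ik\cdot x}\,dk/(2\pi)^d$ with $j$ the maximal coordinate, followed by the $L^1$ bound and Riemann--Lebesgue for the $o(|x|^{-a})$ statement. The only point to keep explicit is that the test function $e^{-ik\cdot x}$ is smooth and periodic precisely because $x\in\Z^d$, which is what lets the weak-derivative definition be applied directly with no boundary terms.
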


Recall from \eqref{eq:nddef} that,
assuming $\rho > \frac{d-8}{2} \vee 0$,
\begin{align} \label{eq:nddef-bis}
n_d = \begin{cases}
    d-2 & (\rho \le 1 + (\frac{d-8}{2} \vee 0) )     \\
    d-1 & (\rho > 1+  (\frac{d-8}{2} \vee 0)).
    \end{cases}
\end{align}
For later reference, we observe that $n_d$ is the largest integer that satisfies
\begin{equation} \label{eq:ndub}
n_d < (d - 2 + \rho \wedge 2 ) \wedge ( \tfrac 1 2 d + 2 + \rho ).
\end{equation}

\begin{proposition}
\label{prop:f_SO}
Let $F_z$ obey Assumption~\ref{ass:Fso}.
Then the function $\hat f_z$ defined in \eqref{eq:G_isolate_k}
is $n_d$ times weakly differentiable, and for any multi-index $\alpha$ with $\abs \alpha \le n_d$,
\begin{align}
\label{eq:fbeta}
\norm { \hat f_\alpha  }_{r} \lesssim \beta
	\qquad (r\inv > \frac{ \abs \alpha + 2 - \rho \wedge 2}d ),
\end{align}
with the constant independent of $z, \beta_0, \beta_1, L$.
Moreover, if $\abs \alpha \ne 0$, then
\begin{align} \label{eq:fbeta2}
\norm { \hat f_\alpha  }_{r} \lesssim \beta (L^{-c} + \beta) + \beta_1
	\qquad (r\inv > \frac{ \abs \alpha + 2 - \rho \wedge 2}d )
\end{align}
for some $c>0$, with the constants independent of $z,\beta_0, \beta_1, L$.
\end{proposition}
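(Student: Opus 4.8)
The plan is to analyze $\hat f_z = \hat E / (\hat A_\mu \hat F_z)$ by combining two ingredients: pointwise bounds on the numerator $\hat E$ and its derivatives near $k=0$, and the infrared lower bounds on the denominators $\hat A_\mu$ and $\hat F_z$. First I would establish the infrared bound \eqref{eq:FIR}: from Assumption~\ref{ass:Fso} one gets $\hat F_z(k) - \hat F_z(0) = z(1-\hat D(k)) - (\hat\Pi_z(k)-\hat\Pi_z(0))$; the random walk piece contributes $z(1-\hat D(k)) \gtrsim L^2|k|^2 \wedge 1$ by the standard properties of $D$ (Appendix~\ref{app:D}), while the $\Pi_z$ contribution is a perturbation of relative size $O(\beta)$ because its second moment $\sum |x|^2 |\Pi_z(x)|$ is $O(\beta L^2)$ (from \eqref{eq:Pi_decay} with $\rho > \frac{d-8}{2}\vee 0$, which makes $\xvee^{-(d+2+\rho)}$ summable against $|x|^2$), and $|\hat\Pi_z(k)-\hat\Pi_z(0)| \le \frac12 |k|^2 \sum|x|^2|\Pi_z(x)| = O(\beta L^2|k|^2)$. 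Since $\hat A_\mu = 1-\mu\hat D$ with $\mu \in [\frac12,1]$ obeys the same kind of bound, both denominators are $\gtrsim L^2|k|^2 \wedge 1$, uniformly in $z,\beta,L$.

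Next I would bound the numerator. By the moment cancellation \eqref{eq:E_condition}, $\hat E(0) = 0$ and $\nabla^2\hat E(0)$ has vanishing trace-type contribution — more precisely $\sum_x E(x) = \sum_x |x|^2 E(x) = 0$ — so near $k=0$ one has $\hat E(k) = O(\cdot)|k|^{2+(\rho\wedge 2)}$-type behaviour, capturing the fact that $E = A_\mu - \lambda F_z$ has its leading $|k|^0$ and $|k|^2$ Taylor coefficients killed and the next correction is governed by the decay exponent $d+2+\rho$ of $\Pi_z$ (which controls finiteness of moments up to order $2+\rho$, hence a Hölder-type modulus of continuity of order $\rho\wedge 2$ for the second derivatives of $\hat\Pi_z$). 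For the $\alpha$-th derivative, $\hat E_\alpha(k)$ behaves like $|k|^{(2+(\rho\wedge 2) - |\alpha|)_+}$ near $0$, times a constant of order $\beta$; crucially, for $|\alpha| \ne 0$ the $\beta_0\delta_{0,x}$ part of $\Pi_z$ contributes only a constant (it is $k$-independent), so its derivatives vanish, and one is left with the $\beta_1$ part plus the random-walk part of $E$. The random-walk part of $\hat E$ near $0$ is $(1-\lambda)\cdot O(|k|^2)$ plus $(\mu - z\lambda)$-type corrections; since $\lambda = 1+O(\beta)$ and $\mu_z = 1 - \lambda\hat F_z(0) = 1+O(\beta)$, these prefactors are $O(\beta)$, and with the lattice-vs-continuum discrepancy of $\hat D$ one extracts the extra $L^{-c}$ in \eqref{eq:fbeta2}. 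This is exactly the step I expect to be the main obstacle: carefully isolating which contributions to $\hat E_\alpha$ carry a bare $\beta$ versus an improved $\beta(L^{-c}+\beta)$, and organizing the estimates of $\hat f_\alpha = \nabla^\alpha(\hat E/(\hat A_\mu\hat F_z))$ via the product and quotient rules so that every term is a product of a numerator-derivative bound and powers of the denominator lower bound, while tracking all the $L$-dependence.

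Then I would apply the Leibniz/quotient rule to $\hat f = \hat E \cdot \hat A_\mu^{-1} \cdot \hat F_z^{-1}$: each weak derivative $\hat f_\alpha$ is a finite sum of terms $\hat E_{\alpha_1} \cdot (\text{derivatives of } \hat A_\mu^{-1}) \cdot (\text{derivatives of }\hat F_z^{-1})$. Using the infrared bounds, a derivative of $\hat A_\mu^{-1}$ of order $j$ is $O((L^2|k|^2\wedge 1)^{-1}(|k|\wedge L^{-1})^{-j})$ — schematically each extra derivative costs a factor $|k|^{-1}$ in the small-$k$ regime — and similarly for $\hat F_z^{-1}$, with the derivatives of $\hat A_\mu$ and $\hat F_z$ themselves being $O(L^2|k| \wedge L)$-type bounded. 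Combining, a generic term in $\hat f_\alpha$ is bounded near $k=0$ by (roughly) $|k|^{(2+(\rho\wedge 2)-|\alpha_1|)_+} \cdot |k|^{-4-(|\alpha|-|\alpha_1|)}$ times $\beta$ and powers of $L$, which after the infrared truncation gives an $L^p$-integrable bound precisely when $r^{-1} > \frac{|\alpha|+2-\rho\wedge 2}{d}$ — the stated range — since the worst small-$k$ power is $|k|^{-(|\alpha|+2-\rho\wedge 2)}$ and $|k|^{-s} \in L^r(\T^d)$ iff $sr < d$. The powers of $L$ assemble into an overall $\beta$ for \eqref{eq:fbeta} (the $L^2|k|^2\wedge 1$ denominators and the $L^2|k|\wedge L$ numerators conspire to cancel, with the truncation at $|k| \sim L^{-1}$ contributing the remaining scaling), and into $\beta(L^{-c}+\beta)+\beta_1$ for \eqref{eq:fbeta2} once the $|\alpha|\ne 0$ structure is exploited as above. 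Finally, the $o(|x|^{-n_d})$ refinement in Theorem~\ref{thm:so} and the improved error bounds follow from the last sentence of Lemma~\ref{lem:Graf} and dominated convergence applied to $\hat f_\alpha$ at fixed $z,\beta_0,\beta_1,L$, using that each $\hat f_\alpha \in L^1(\T^d)$.
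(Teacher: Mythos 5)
Your high-level architecture matches the paper's: isolate the leading term, expand $\hat f_\alpha$ by the product/quotient rules into terms built from derivatives of $\hat E$, $\hat A^{-1}$, $\hat F^{-1}$, use the infrared bounds on the denominators and the moment cancellation \eqref{eq:E_condition} on the numerator, and integrate. The genuine gap is that you propose to control every factor \emph{pointwise} and only then integrate, whereas the required estimates are fundamentally $L^q$-norm estimates combined via H\"older's inequality (this is the content of Lemma~\ref{lem:AFEbds_SO} and Lemma~\ref{lem:FTbdd} in the paper). Concretely: (a) for $|\gamma|\ge 2+\rho$ the function $x^\gamma\Pi_z(x)$ is not absolutely summable, so $\hat\Pi_\gamma$ (and hence $\hat E_\gamma$, $\hat F_\gamma$) exists only as a weak derivative with no pointwise bound at all; it is controlled only through the $\ell^p(\Z^d)\to L^q(\T^d)$ boundedness of the Fourier transform. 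Since $n_d$ can be as large as $d-1$ while $2+\rho$ equals, e.g., $d-4$ for percolation, derivatives in this range genuinely occur, and your claimed pointwise bound $|\hat E_\alpha(k)|\lesssim\beta|k|^{(2+\rho\wedge 2-|\alpha|)_+}$ is false there (the correct statement, Lemma~\ref{lemma:Ek}, holds only for $|\alpha|<2+\sigma\le 4$). (b) Away from the ball $|k|<1/L$, the only pointwise bound on $\hat D_\gamma$ is $L^{|\gamma|}$, which does not decay in $k$; a pointwise product bound therefore yields $\|\hat A_\gamma/\hat A\|_q\lesssim L^{|\gamma|}$ rather than the needed $L^{|\gamma|-d/q}$. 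The factor $L^{-d/q}$ comes from the norm estimate $\|\hat D_\gamma\|_q\lesssim L^{|\gamma|-d/q}$ (Lemma~\ref{lemma:D_L}, proved via the oscillatory decay of $\hat D$ on $\|k\|_\infty>1/L$), and without it you lose both the $L$-uniformity of the constants in \eqref{eq:fbeta} and the factor $L^{-c}$ in \eqref{eq:fbeta2}, which in the paper arises precisely from the strict inequality $q_1^{-1}>|\gamma|/d$ giving $L^{|\gamma|-d/q_1}=L^{-c}$, not from a ``lattice-vs-continuum discrepancy'' of $\hat D$.

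A secondary issue: when you apply the product and quotient rules to weak derivatives you must verify \emph{a posteriori} that every term produced is integrable before the formula for $\hat f_\alpha$ is legitimate (this is why the paper proves the $L^{q}$ bounds of Lemma~\ref{lem:AFEbds_SO} first and then invokes \cite[Lemmas~A.2--A.3]{LS24a}); your proposal takes the differentiation formula for granted. The fix for all of the above is exactly the paper's route: split each ratio by the indicator of $B_L$ using the two alternatives in the infrared bound, bound $\hat A_\gamma/\hat A$, $\hat F_\gamma/\hat F$ in $L^{q_1}$ and $\hat E_\gamma/(\hat A\hat F)$ in $L^{q_2}$ separately (using Lemma~\ref{lemma:Ek} only on $B_L$ and for low-order derivatives, and Lemma~\ref{lem:FTbdd} for high-order ones), and then combine with H\"older.
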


\begin{proof}[Proof of Theorem~\ref{thm:so} assuming Proposition~\ref{prop:f_SO}]
By Proposition~\ref{prop:f_SO}, $\hat f_z$ is $n_d$ times weakly differentiable, so $\grad^\alpha \hat f_z \in L^1(\T^d)$ for all multi-indices $\alpha$ with $\abs \alpha \le n_d$.
By \eqref{eq:ndub}, $r=1$ is permitted in \eqref{eq:fbeta} and \eqref{eq:fbeta2}
for all $|\alpha|\le n_d$.
Since $\G_z = \lambdaSO_z S_\muz + f_z$, it follows from Lemma~\ref{lem:Graf} and \eqref{eq:fbeta} that
\begin{align}
\G_z(x) = \lambdaSO_z S_\muz(x) + O \( \frac \beta {\xvee^{n_d}}\),
\end{align}
and that $f_z(x) = o(|x|^{-n_d})$ as $\abs x\to \infty$ for fixed $z, \beta_0, \beta_1, L$.
For the improved dependence of the error term on $\beta,L$ when $x\ne 0$, we note that the $\abs \alpha=0$ part of \eqref{eq:Graf} is only used to estimate $h(0)$.
Therefore, it suffices to observe that \eqref{eq:fbeta2} implies that
$\norm{ \hat f_\alpha }_1 \lesssim  \beta (L^{-c} + \beta) + \beta_1$
for some $c>0$ for all $\abs \alpha = n_d$.
\end{proof}

The proof of Proposition~\ref{prop:f_SO} uses product and quotient rules of differentiation.
Since $\hat f = \hat E / (\hat A \hat F)$, the $\alpha$-th weak derivative of $\hat f$ is given by a linear combination of terms of the form
\begin{align} \label{eq:f_decomp0}
\frac{ \prod_{n=1}^i \hat A_{\delta_n} }{\hat A ^{1+i} }
\hat E_{\alpha_2}
\frac{ \prod_{m=1}^j \hat F_{\gamma_m}  }{\hat F^{1+j} }
=
\( \prod_{n=1}^i \frac{ \hat A_{\delta_n} }{\hat A } \)
\( \frac{ \hat E_{\alpha_2} }{ \hat A \hat F } \)
\( \prod_{m=1}^j \frac{ \hat F_{\gamma_m}  }{ \hat F } \),
\end{align}
where $\alpha = \alpha_1 + \alpha_2 + \alpha_3$, $ 0 \le i \le \abs {\alpha_1}$, $0 \le j \le \abs {\alpha_3 }$, $\sum_{n=1}^i \delta_n = \alpha_1$, and $\sum_{m=1}^j \gamma_m = \alpha_3$,
provided that
we can justify \emph{a posteriori} that all terms of the form \eqref{eq:f_decomp0} are integrable (see \cite[Appendix~A]{LS24a}).
For this, we use H\"older's inequality and the following lemma.

\begin{lemma}
\label{lem:AFEbds_SO}
Let $F_z$ obey Assumption~\ref{ass:Fso}.
Let $\gamma$ be a multi-index with $ \abs \gamma < (d-2+\rho \wedge 2) \wedge (\tfrac 1 2 d  + 2 +\rho)$.
Choose $\sigma \in (0, \rho \wedge 2)$
and $q_1,q_2$ satisfying
\begin{equation} \label{eq:q1q2}
    \frac{ \abs \gamma } d < q_1\inv < 1,
    \qquad
    \frac{ 2 - \sigma + \abs \gamma } d < q_2\inv < 1.
\end{equation}
Then $\hat F, \hat A, \hat E$ are $\gamma$-times weakly differentiable, and
\begin{align} \label{eq:AFEbds_SO}
\Bignorm{ \frac{ \hat A_\gamma }{ \hat A } }_{q_1} , \;
\Bignorm{ \frac{ \hat F_\gamma }{ \hat F } }_{q_1}
	\lesssim 1 , \qquad
\Bignorm{ \frac{ \hat E_\gamma }{ \hat A \hat F }}_{q_2}
\lesssim \beta,
\end{align}
with the constants independent of $z,\beta_0, \beta_1, L$.
Moreover, if $\abs \gamma \ne 0$, the estimates are improved to
\begin{align} \label{eq:AFEbds_SO2}
\Bignorm{ \frac{ \hat A_\gamma }{ \hat A } }_{q_1}
	\lesssim L^{ \abs \gamma - d/q_1 } 	, \quad
\Bignorm{ \frac{ \hat F_\gamma }{ \hat F } }_{q_1}
	\lesssim L^{ \abs \gamma - d/q_1 } + \beta , \quad
\Bignorm{ \frac{ \hat E_\gamma }{ \hat A \hat F }}_{q_2}
	\lesssim \beta L^{2 - \sigma + \abs \gamma - d/q_2} + \beta_1,
\end{align}
with the constants independent of $z,\beta_0, \beta_1, L$.
\end{lemma}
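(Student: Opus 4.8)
The plan is to follow the scheme of the corresponding estimate in \cite{LS24a}, now carrying the dependence on $L$ through the bounds on $\hat D$ established in Appendix~\ref{app:D}. Three inputs are used throughout: the infrared lower bounds $\hat A_{\muSO_z}(k)\gtrsim (L|k|\wedge 1)^2$ (using $\muSO_z\ge 1/2$ and the infrared bound for $D$) and $\hat F_z(k)\gtrsim (L|k|\wedge 1)^2$ (this is \eqref{eq:FIR}); upper bounds on the derivatives of $\hat D$ of the form $|\hat D_\gamma(k)|\lesssim L^{|\gamma|}(L|k|\wedge 1)^{(2-|\gamma|)\vee 0}$ for $|\gamma|\ge 1$, together with a Taylor-remainder refinement for $r_D(k)=\hat D(k)-1+\frac{\sigma^2}{2d}|k|^2$ (with $\sigma^2$ the variance of $D$), namely that $r_{D}$ and its derivatives vanish to one extra order at $k=0$, and rapid decay in $L|k|$ of all of these away from the origin; and H\"older's inequality on $\T^d$ combined with the Hausdorff--Young inequality on $\Z^d$. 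Weak differentiability of $\hat A,\hat F,\hat E$ up to order $\gamma$ holds because $\hat A,\hat D$ are smooth and, by the hypothesis $|\gamma|<\frac 12 d+2+\rho$, the function $x\mapsto x^\gamma\Pi_z(x)$ lies in $\ell^2(\Z^d)$, so $\hat\Pi_{z,\gamma}\in L^2(\T^d)\subset L^1(\T^d)$.

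For the quotients $\hat A_\gamma/\hat A$ and $\hat F_\gamma/\hat F$ with $|\gamma|\ge 1$, write $\hat A_\gamma=-\muSO_z\hat D_\gamma$ and $\hat F_\gamma=-z\hat D_\gamma-\hat\Pi_{z,\gamma}$, and estimate over the two regions $\{L|k|\le 1\}$ and $\{L|k|>1\}$. On the first region the denominator is $\gtrsim (L|k|)^2$ and the constraint $|\gamma|/d<q_1\inv$ from \eqref{eq:q1q2} is exactly what makes the integrand integrable near $k=0$, producing a factor $L^{|\gamma|-d/q_1}$; on the second region the rapid decay in $L|k|$ supplies the same factor. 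For the $\hat\Pi_{z,\gamma}$ piece of $\hat F_\gamma$ use H\"older: $\|\hat\Pi_{z,\gamma}/\hat F\|_{q_1}\le \|\hat\Pi_{z,\gamma}\|_{a}\|1/\hat F\|_{b}$ with $a\inv+b\inv=q_1\inv$, where $\|1/\hat F\|_{b}\lesssim 1$ whenever $b\inv>2/d$ (infrared bound) and $\|\hat\Pi_{z,\gamma}\|_{a}\lesssim\beta_1$ by Hausdorff--Young from $(i\,\cdot)^\gamma\Pi_z\in\ell^{a'}$, valid for $a\inv>((|\gamma|-2-\rho)\vee 0)/d$; the constraint $|\gamma|<\frac 12 d+2+\rho$ makes these ranges overlap for every $q_1$ allowed by \eqref{eq:q1q2}, except in the boundary case $|\gamma|=1$, where I instead use the cancellation $\sum_x x_j\Pi_z(x)=0$ (a consequence of $\Z^d$-symmetry) to obtain $|\hat\Pi_{z,e_j}(k)|\lesssim\beta_1(|k|\wedge 1)$ pointwise, hence $\|\hat\Pi_{z,e_j}/\hat F\|_{q_1}\lesssim\beta_1$ directly for $q_1\inv>1/d$. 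Collecting these gives both lines of \eqref{eq:AFEbds_SO} and \eqref{eq:AFEbds_SO2} for these two quotients, the crude $O(1)$ bound following since $L^{|\gamma|-d/q_1}\le 1$ and $\beta_1\le\beta$, and the case $|\gamma|=0$ being trivial.

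The crux is the estimate of $\hat E_\gamma/(\hat A\hat F)$, because a term-by-term bound of $\hat E=\hat A_{\muSO_z}-\lambdaSO_z F_z$ would destroy the cancellation built into the choice \eqref{eq:lambda_z-so}. Instead I expand first: using $\hat A_{\muSO_z}(k)=1-\muSO_z\hat D(k)$, $\hat F_z(k)=1-z\hat D(k)-\hat\Pi_z(k)$, together with $\hat E(0)=0$ and the identity $\muSO_z-\lambdaSO_z z=\lambdaSO_z\sigma^{-2}\sum_x|x|^2\Pi_z(x)$ (which is equivalent to the second equation of \eqref{eq:E_condition}), one checks that the constant and $|k|^2$ terms cancel, leaving
\[ \hat E(k) = -(\muSO_z-\lambdaSO_z z)\,r_D(k) + \lambdaSO_z\,r_\Pi(k),\qquad r_\Pi(k)=\hat\Pi_z(k)-\hat\Pi_z(0)+\frac{\sum_x|x|^2\Pi_z(x)}{2d}\,|k|^2 . \]
Here $|\muSO_z-\lambdaSO_z z|\lesssim \beta_1\sigma^{-2}\lesssim\beta_1 L^{-2}$, while $r_\Pi$ is an order-two Taylor remainder of $\hat\Pi_z$ whose value, gradient, and Hessian at $k=0$ vanish (the odd ones by symmetry); since $\Pi_z$ decays like $\beta_1\xvee^{-(d+2+\rho)}$, for any $\sigma\in(0,\rho\wedge 2)$ one has $|r_{\Pi,\gamma}(k)|\lesssim\beta_1(|k|^{2+\sigma-|\gamma|}\wedge 1)$ near the origin and $|r_{\Pi,\gamma}(k)|\lesssim\beta_1$ globally --- for $|\gamma|\le 2$ this is immediate from differentiating a H\"older-$\sigma$ function, and for $|\gamma|\ge 3$ it uses the vanishing of $\hat\Pi_{z,\gamma}$ at $k=0$ forced by oddness, supplemented, when $\rho$ is too small for $\hat\Pi_{z,\gamma}$ to be pointwise controlled, by a Hausdorff--Young bound in $L^p$ and H\"older's inequality. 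Dividing by $\hat A\hat F\gtrsim(L|k|\wedge 1)^4$ and integrating over the two regions, the constraint $q_2\inv>(2-\sigma+|\gamma|)/d$ in \eqref{eq:q1q2} is precisely what renders the integral near $k=0$ finite; the $r_\Pi$ part then contributes $\lesssim\beta_1$ and the $r_D$ part $\lesssim\beta L^{2-\sigma+|\gamma|-d/q_2}$, which is \eqref{eq:AFEbds_SO2}, and \eqref{eq:AFEbds_SO} follows at once (with $|\gamma|=0$ handled by $|r_\Pi(k)|,\,|(\muSO_z-\lambdaSO_z z)r_D(k)|\lesssim\beta_1$ directly).

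I expect the main obstacle to be the bookkeeping in this last step rather than any single hard inequality: keeping the fractional regularity of $\hat\Pi_z$ and the exact cancellation in $\hat E$ coupled to the right powers of $L$, so that each of the region-by-region integrals converges exactly under the endpoint conditions of \eqref{eq:q1q2} (this is the reason $\sigma$ is taken strictly below $\rho\wedge 2$), and tracking carefully which of $\beta_0,\beta_1$ and which power of $L$ attaches to each term --- the point that was trivial in \cite{LS24a} but must here be reproved with the extra $L$-dependence, and the reason the lemma is stated in the two-tiered form \eqref{eq:AFEbds_SO}--\eqref{eq:AFEbds_SO2}.
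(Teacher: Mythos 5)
Your proposal is correct and follows essentially the same route as the paper: the infrared bounds for $\hat A$ and $\hat F$ to split $\T^d$ into $B_L$ and its complement, the $L^q$ bounds \eqref{eq:D_alpha} on $\hat D_\gamma$ from Lemma~\ref{lemma:D_L}, Lemma~\ref{lem:FTbdd} for $\hat\Pi_\gamma$, H\"older's inequality, and the cancellation \eqref{eq:E_condition} exploited through second-order Taylor remainders. The only differences are organisational: the paper handles $\hat E_\gamma/(\hat A\hat F)$ via the identity \eqref{eq:EPi} and a single application of Lemma~\ref{lemma:Ek} to $E$ in $x$-space (after relaxing the decay of $\Pi$ to that of $D$), rather than your equivalent algebraic split $\hat E=-(\mu-\lambda z)r_D+\lambda r_\Pi$; also, your appeal to ``rapid decay'' of $\hat D_\gamma$ away from the origin should be replaced by the $L^q$ bound \eqref{eq:D_alpha} on the region $\|k\|_\infty>1/L$, since $\hat D$ need not decay rapidly pointwise for general $v$.
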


\begin{proof}[Proof of Proposition~\ref{prop:f_SO} assuming Lemma~\ref{lem:AFEbds_SO}]
Let $\abs \alpha \le n_d$, $\rho_2 = \rho \wedge 2$, and pick some $\sigma \in (0,\rho_2)$.
We use the product and quotient rules of weak derivatives \cite[Lemmas~A.2--A.3]{LS24a} to calculate $\hat f_\alpha$.
For the hypotheses of these rules, we need to verify all terms of the form \eqref{eq:f_decomp0} are integrable.
By H\"older's inequality and Lemma~\ref{lem:AFEbds_SO}, \eqref{eq:f_decomp0} belongs to $L^r(\T^d)$ as long as
\begin{align}
\frac 1 r > \frac{ \sum_{n=1}^i \abs {\delta_n} } d + \frac{ 2 - \sigma + \abs{\alpha_2 } }d
	+ \frac{ \sum_{m=1}^j \abs {\gamma_m} } d
= \frac{ \abs \alpha + 2 - \sigma } d.
\end{align}
Since $\sigma < \rho_2$ is arbitrary, this shows that $\eqref{eq:f_decomp0}$ is in $L^r$ for all
$r\inv > (\abs \alpha + 2 - \rho_2 )/d$.
In particular, it belongs to $L^1$ since $\abs \alpha \le n_d < d-2+\rho_2$ by \eqref{eq:ndub}.
This proves that $\hat f$ is $\alpha$-times weakly differentiable and that $\hat f_\alpha \in L^r$ with the same values of $r$.
Furthermore, we get a quantitative estimate on $\norm {\hat f_\alpha}_r$ from H\"older's inequality.
For \eqref{eq:fbeta}, we use \eqref{eq:AFEbds_SO} and get $\beta$ from the norm of $\hat E_{\alpha_2} / (\hat A \hat F)$.
For \eqref{eq:fbeta2}, since there is at least one derivative taken,
in one of the factors we can use the stronger \eqref{eq:AFEbds_SO2}. The constant $c>0$ is produced by the strict inequalities in \eqref{eq:q1q2}.
\end{proof}

To complete the proof of Theorem~\ref{thm:so},
it remains to prove Lemma~\ref{lem:AFEbds_SO}.
The proof uses the following elementary facts about the Fourier transform.
The first lemma translates the good moment behaviour of $E(x)$ in \eqref{eq:E_condition} (the first moments of $E(x)$ also vanish, by symmetry) into good bounds on $\hat E(k)$ and its derivatives, which ultimately allows us to take $n_d$ derivatives of $\hat f$.
The second lemma uses boundedness of the $L^p$ Fourier transform when $1\le p \le 2$.

\begin{lemma}[{\cite[Lemma~2.2]{LS24a}}]
\label{lemma:Ek}
Suppose $E: \Z^d \to \R$ is $\Z^d$-symmetric,
has vanishing zeroth and second moments as in
\eqref{eq:E_condition},
and satisfies $\abs{ E(x) } \le  K \abs x^{-(d+2 + \rho)}$ for some $K, \rho>0$.
Choose $\sigma \in (0, \rho)$ such that $\sigma \le 2$ and let $\alpha$ be a multi-index
with
$\abs \alpha < 2 + \sigma$.
Then there is a constant $c = c(\sigma, \rho, d)$ such that
\begin{align} \label{eq:Ek}
\abs{ \hat E_\alpha (k) }
\le cK \cdot \abs k^{2+\sigma - \abs \alpha}.
\end{align}
\end{lemma}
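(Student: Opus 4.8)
The plan is to expand $\hat E_\alpha$ as a Fourier series and use the vanishing moments of $E$ to subtract a Taylor polynomial of $k\mapsto e^{ik\cdot x}$, after which every summand is pointwise $O(|k|^{2+\sigma-|\alpha|})$ while the remaining $x$-sum stays absolutely convergent. In particular, no dyadic or ``near/far'' split of the $x$-sum is needed.

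First I would record that $\hat E_\alpha(k) = i^{|\alpha|}\sum_{x\in\Z^d} E(x)\,x^\alpha e^{ik\cdot x}$, a series that converges absolutely because $|\alpha| < 2+\sigma \le 2+\rho$ and $|E(x)| \le K|x|^{-(d+2+\rho)}$. The key elementary point is that the $\Z^d$-symmetry of $E$, together with the vanishing of its zeroth and second moments, forces $\sum_x x^\beta E(x)=0$ for \emph{every} multi-index $\beta$ with $|\beta|\le 3$: odd-order moments vanish by reflection in a coordinate hyperplane; the off-diagonal second moments $\sum_x x_i x_j E(x)$ with $i\ne j$ vanish by the same reflection; and the diagonal ones $\sum_x x_i^2 E(x)$ are all equal by the $\pi/2$-rotation symmetry, hence each equal to $\frac1d\sum_x |x|^2 E(x) = 0$. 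Expanding $(k\cdot x)^l$ into monomials, this yields $\sum_x E(x)\,x^\alpha (k\cdot x)^l = 0$ whenever $|\alpha|+l\le 3$.

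Next I would fix the truncation order $m := \lceil 1+\sigma-|\alpha|\rceil$ and check the two arithmetic facts that make everything close: (i) $0\le m\le 3-|\alpha|$, which uses only $|\alpha|<2+\sigma$ and $\sigma\le 2$, so that the moment identity of the previous paragraph applies for all $l\le m$; and (ii) $2+\sigma-|\alpha|\in[m,m+1]$. Writing $T_m(u):=\sum_{l=0}^m (iu)^l/l!$ and using (i), I can subtract $T_m(k\cdot x)$ term by term to get $\hat E_\alpha(k) = i^{|\alpha|}\sum_x E(x)\,x^\alpha\bigl(e^{ik\cdot x} - T_m(k\cdot x)\bigr)$. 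The remaining input is the elementary bound $|e^{iu}-T_m(u)|\le C|u|^{a}$, valid for all real $u$ and every $a\in[m,m+1]$ (from the Taylor remainder when $|u|\le 1$ and the triangle inequality when $|u|\ge 1$). Applying it with $a=2+\sigma-|\alpha|$, permitted by (ii), together with $|k\cdot x|\le |k|\,|x|$ and $|x^\alpha|\le |x|^{|\alpha|}$, gives
\[
|\hat E_\alpha(k)| \le C\,|k|^{2+\sigma-|\alpha|}\sum_{x\ne 0}|E(x)|\,|x|^{2+\sigma} \le CK\,|k|^{2+\sigma-|\alpha|}\sum_{x\ne 0}|x|^{-d-(\rho-\sigma)},
\]
and the last sum converges precisely because $\sigma<\rho$. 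This is the claimed bound, with $c=c(\sigma,\rho,d)$; the $x=0$ term drops out since $2+\sigma-|\alpha|>0$.

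The step requiring the most care is the choice of $m$: it must be small enough not to exceed $3-|\alpha|$, the number of vanishing moments available for $x^\alpha E(x)$ --- which is exactly where the hypotheses $\sigma\le 2$ and $|\alpha|<2+\sigma$ enter, since $\sum_x x^\beta E(x)$ need not vanish once $|\beta|\ge 4$ --- while being large enough that the real exponent $a=2+\sigma-|\alpha|$ still lies in $[m,m+1]$, so that the single inequality $|e^{iu}-T_m(u)|\le C|u|^a$ can carry both the small-$|u|$ and large-$|u|$ ranges. Recognising that one may use a \emph{fractional} power $a$ in the Taylor-remainder estimate is what lets the proof avoid any case distinction on $|x|$; everything else is a routine summability check driven by $\sigma<\rho$.
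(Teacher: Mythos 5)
The paper does not actually prove this lemma: it is quoted verbatim from \cite[Lemma~2.2]{LS24a}, so there is no in-paper proof to compare against. Your argument is correct and is essentially the standard one used in that reference — exploit $\Z^d$-symmetry and the vanishing zeroth and second moments to subtract a Taylor polynomial of $k\mapsto e^{ik\cdot x}$ inside the Fourier sum, bound the remainder by a fractional power $|k\cdot x|^{2+\sigma-|\alpha|}$ valid on all of $\R$, and let $\sigma<\rho$ supply the summability of $\sum_{x\ne 0}|x|^{-(d+\rho-\sigma)}$; your bookkeeping with $m=\lceil 1+\sigma-|\alpha|\rceil$ is a clean way to organise the case analysis that \cite{LS24a} does with explicit cosine/sine remainders. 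One point worth stating explicitly: the blanket claim that $\sum_x x^\beta E(x)=0$ for all $|\beta|\le 3$ presumes absolute convergence of the third moments, which under the decay hypothesis requires $\rho>1$. This is not a gap in your proof, because you only invoke moments of order $|\alpha|+l=3$ when $|\alpha|+m=\lceil 1+\sigma\rceil=3$, i.e.\ when $\sigma>1$ and hence $\rho>\sigma>1$, in which case those moments do converge absolutely; but the justification should be recorded, since for $\rho\le 1$ the order-$3$ moment sums are not absolutely convergent and the term-by-term subtraction would otherwise be unjustified.
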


\begin{lemma}[{\cite[Lemma~2.6]{LS24a}}]
\label{lem:FTbdd}
Let $h: \Z^d \to \R$ obey $|h(x)| \le K \nnnorm x^{-b}$ for some $K,b>0$.
\begin{enumerate}[label=(\roman*)]
\item
If $b>d$ then $h \in \ell^1(\Z^d)$, $\hat h \in L^\infty(\T^d)$, and $\norm{ \hat h }_\infty \le c_{d,b} K$.

\item
If $b\le d$ then $h \in \ell^p(\Z^d)$ for $p>d/b$.
If also $\frac d2 < b \le d$ then $\hat h \in L^q(\T^d)$
and $\norm{ \hat h } _q \le c_{d,b,q}K$  for all $1 \le q < d/(d-b)$.
\end{enumerate}
\end{lemma}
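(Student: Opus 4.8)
The plan is to deduce everything from two ingredients: an elementary comparison of $\sum_{x\in\Z^d}\nnnorm x^{-a}$ with a one-dimensional series, and the Hausdorff--Young inequality (together with Plancherel's theorem) for the dual pair $(\Z^d,\T^d)$.

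First I would record the summability fact that $\sum_{x\in\Z^d}\nnnorm x^{-a}<\infty$ if and only if $a>d$. Indeed $\nnnorm x=\abs x$ for $\abs x\ge 1$, and the number of lattice points in the shell $\{n\le\abs x<n+1\}$ is of order $n^{d-1}$, so the sum is comparable to $\sum_{n\ge1}n^{d-1-a}$, which converges precisely when $a>d$ (the point $x=0$ contributes $1$). Taking $a=b$ with $b>d$ gives $h\in\ell^1(\Z^d)$ with $\norm h_{\ell^1}\le K\sum_x\nnnorm x^{-b}=:c_{d,b}K$; since $\abs{\hat h(k)}\le\sum_x\abs{h(x)}$ for every $k\in\T^d$, this is part~(i). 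Taking $a=bp$ gives, when $b\le d$ and $p>d/b$, that $h\in\ell^p(\Z^d)$ with $\norm h_{\ell^p}\le K\big(\sum_x\nnnorm x^{-bp}\big)^{1/p}=:c_{d,b,p}K$, which is the first assertion of part~(ii).

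For the $L^q$ bound in part~(ii), assume $\tfrac d2<b\le d$; note that then $d/(d-b)>2$. Since $b>d/2$, the exponent $p=2$ obeys $p>d/b$, so $h\in\ell^2(\Z^d)$, and Plancherel's theorem (in the normalization of the excerpt, where $\norm{\hat h}_2=\norm h_{\ell^2}$) gives $\hat h\in L^2(\T^d)$ with $\norm{\hat h}_2\le c_{d,b}K$. For $1\le q\le2$ this already suffices: since the normalized measure on $\T^d$ has total mass $1$, H\"older's inequality yields $\norm{\hat h}_q\le\norm{\hat h}_2\le c_{d,b}K$. For $2<q<d/(d-b)$, I would set $p=q/(q-1)\in(1,2)$ and observe that the inequality $q<d/(d-b)$ rearranges to $p>d/b$, so $h\in\ell^p(\Z^d)$ by the previous paragraph; the Hausdorff--Young inequality for $(\Z^d,\T^d)$, which holds with constant $1$ as the Riesz--Thorin interpolant of $\norm{\hat h}_\infty\le\norm h_{\ell^1}$ and $\norm{\hat h}_2=\norm h_{\ell^2}$, then gives $\hat h\in L^q(\T^d)$ with $\norm{\hat h}_q\le\norm h_{\ell^p}\le c_{d,b,p}K$, and $c_{d,b,p}$ is renamed $c_{d,b,q}$. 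Together the ranges $[1,2]$ and $(2,d/(d-b))$ exhaust the claimed range of $q$.

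There is no serious obstacle; the statement is elementary. The one point that requires a little care is that Hausdorff--Young produces membership in $L^{p'}$ only for $p'\ge2$, so it does not by itself reach the exponents $1\le q<2$ in the claimed conclusion; that part of the range must be obtained by passing through $L^2$ and exploiting the finiteness of the measure on $\T^d$. It is also worth noting that the Hausdorff--Young constant here equals $1$, so all the constants above depend only on $d$, $b$, and $q$ (the $q$-dependence entering through the choice of $p$), with no hidden uniformity issue.
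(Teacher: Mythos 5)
Your proof is correct, and it follows essentially the same route as the source this paper cites for the lemma (\cite[Lemma~2.6]{LS24a}): comparison of $\sum_x \nnnorm{x}^{-a}$ with $\sum_n n^{d-1-a}$ to get the $\ell^1$ and $\ell^p$ bounds, then the Hausdorff--Young/Plancherel bound for the $L^q$ conclusion, with the range $1\le q\le 2$ handled via the finiteness of the normalized measure on $\T^d$. Nothing further is needed.
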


\subsection{Proof of Lemma~\ref{lem:AFEbds_SO}}

We first collect properties of $D$ that we need.  The proof of Lemma~\ref{lemma:D_L}
is deferred to Appendix~\ref{app:D}.

\begin{lemma} \label{lemma:D_L}
If $L\ge L_0$ with $L_0$ sufficiently large (depending only on $d,v$),
then the following statements hold.
For any $a>0$,
\begin{align}
\label{eq:D_decay}
D(x) \lesssim \frac{  L^{a} } { \xvee^{d + a} }.
\end{align}
Uniformly in $\muSO \in [\frac 12,1]$,
$\hat A_\mu = 1 - \muSO\hat D$ satisfies the \emph{infrared bound}
\begin{align}
\label{eq:AIR}
\hat A_\muSO(k) - \hat A_\muSO(0) \gtrsim  L^2 \abs k^2 \wedge 1
\qquad (k \in \T^d).
\end{align}
For each multi-index $\alpha$,
\begin{align}
\label{eq:D_alpha}
\norm {  \hat D_\alpha }_{q } \lesssim L^{\abs\alpha - d/q} 	
\qquad ( 0 \le q\inv < 1 ).
\end{align}
\end{lemma}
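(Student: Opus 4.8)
\emph{Plan.}
All three assertions are elementary consequences of the representation $D(x)=v(x/L)/Z_L$, where $Z_L:=\sum_{y\in\Z^d}v(y/L)$, together with the two asymptotics already recorded in the text: $Z_L\sim L^d$, and $\sigma^2=\sum_x|x|^2D(x)$ is comparable to $L^2$. The decay bound \eqref{eq:D_decay} is immediate: since $v$ is bounded and supported in $[-1,1]^d$ we have $D(x)\lesssim L^{-d}\1\{\norm x_\infty\le L\}$, and on the support of $D$ we have $\xvee\lesssim L$, so there $L^{-d}=L^{a}\,L^{-(d+a)}\lesssim L^{a}\,\xvee^{-(d+a)}$; off the support the bound is trivial.

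For the moment bounds \eqref{eq:D_alpha} I would use summation by parts to convert the smoothness of $v$ into decay of $\hat D_\alpha$. Since $D$ has finite support, $\hat D_\alpha(k)=\sum_{x}D(x)(ix)^\alpha e^{ik\cdot x}=\frac{i^{|\alpha|}L^{|\alpha|}}{Z_L}\sum_{x}w_\alpha(x/L)\,e^{ik\cdot x}$, where $w_\alpha(y):=y^\alpha v(y)$ inherits the regularity of $v$. The (axis-parallel) piecewise continuity of the mixed partial $\partial^d v/\partial x_1\cdots\partial x_d$ forces the $d$-fold mixed difference $\Delta^{(d)}[w_\alpha(\cdot/L)]$ to have $\ell^1(\Z^d)$-norm $\lesssim 1$, uniformly in $L$ (for $v$ a cube indicator this norm is exactly $2^d$). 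Applying discrete Abel summation once in each coordinate, and bounding the partial exponential sums $\sum_{x_j}e^{ik_jx_j}$ over intervals of at most $2L+1$ consecutive integers by $\min\!\big(2L+1,\,1/|\sin(k_j/2)|\big)\le\min\!\big(2L+1,\,\pi/|k_j|\big)$, yields the pointwise estimate
\begin{equation}
\label{eq:pointwise-Dalpha}
    |\hat D_\alpha(k)|\;\lesssim\;L^{|\alpha|}\prod_{j=1}^{d}\min\!\big(1,\,(L|k_j|)^{-1}\big).
\end{equation}
Integrating \eqref{eq:pointwise-Dalpha} gives $\|\hat D_\alpha\|_q^q\lesssim L^{q|\alpha|}\prod_j\int_{-\pi}^{\pi}\min(1,(L|t|)^{-q})\,dt\lesssim L^{q|\alpha|-d}$ for every $q>1$ (the one-variable integral is $\lesssim L^{-1}$ precisely because $q>1$, which is why $q^{-1}<1$ is required), while $\|\hat D_\alpha\|_\infty\lesssim L^{|\alpha|}$ is the case $q=\infty$; this is \eqref{eq:D_alpha}.

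For the infrared bound \eqref{eq:AIR}, note first that $\hat A_\mu(k)-\hat A_\mu(0)=\mu\big(1-\hat D(k)\big)$ because $\hat D(0)=1$, so with $\mu\ge\half$ it suffices to show $1-\hat D(k)\gtrsim L^2|k|^2\wedge 1$, and I would split $\T^d$ into three ranges. \emph{(i)} If $|k|\le\pi/(\sqrt d\,L)$, then every $x\in\supp D\subseteq[-L,L]^d$ satisfies $|k\cdot x|\le\pi$, so $1-\cos(k\cdot x)\ge\tfrac{2}{\pi^2}(k\cdot x)^2$ pointwise; summing against $D$ and using $\Z^d$-symmetry ($\sum_x x_ix_jD(x)=\tfrac{\sigma^2}{d}\delta_{ij}$) and $\sigma^2\gtrsim L^2$ gives $1-\hat D(k)\gtrsim\sigma^2|k|^2\gtrsim L^2|k|^2$, which is $\gtrsim L^2|k|^2\wedge1$. \emph{(ii)} If $|k|\ge C/L$ with $C=C(d,v)$ chosen large enough, the $\alpha=0$ case of \eqref{eq:pointwise-Dalpha} gives $|\hat D(k)|\lesssim L^{-d}\prod_j\min(2L+1,\pi/|k_j|)$; bounding the factor for the largest coordinate $|k_j|=\norm k_\infty\ge|k|/\sqrt d$ by $\pi\sqrt d\,L/C$ and the remaining $d-1$ factors by $2L+1$ makes $|\hat D(k)|\le\half$, so $1-\hat D(k)\ge\half\ge L^2|k|^2\wedge1$. \emph{(iii)} On the remaining bounded annulus $\pi/(\sqrt d\,L)<|k|<C/L$, write $\hat v(\xi):=\int_{\R^d}v(y)e^{i\xi\cdot y}\,dy$; a one-dimensional Riemann-sum comparison in each variable — with error controlled by the same mixed-difference bound used above, now applied to $v(y)(1-\cos(Lk\cdot y))$ with $|Lk|$ in a fixed compact set — gives $1-\hat D(k)=\big(1-\Re\hat v(Lk)\big)+O(L^{-1})$, uniformly over this range. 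Since $1-\Re\hat v(\xi)=\int v(y)(1-\cos(\xi\cdot y))\,dy$ is strictly positive for every $\xi\ne0$ — equality would force $\xi\cdot y\in2\pi\Z$ for Lebesgue-a.e.\ $y$ in $\supp v$, impossible because $\supp v$ has positive measure while $\{y:\xi\cdot y\in2\pi\Z\}$ is a countable union of hyperplanes — and $\hat v$ is continuous, $1-\Re\hat v$ is bounded below by a positive constant on the compact annulus $\{\pi/\sqrt d\le|\xi|\le C\}$; hence $1-\hat D(k)\gtrsim1\gtrsim L^2|k|^2\wedge1$ for all large $L$. Together with the above this proves Lemma~\ref{lemma:D_L}; the details are to be carried out in Appendix~\ref{app:D}.

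\emph{Main obstacle.} The one genuinely non-mechanical point is the middle range where $|k|$ is of order $L^{-1}$ in the infrared bound: there the pointwise cosine inequality is unavailable (because $k\cdot x$ can exceed $\pi$) and summation by parts delivers only $|\hat D(k)|\lesssim1$, so one must pass to the continuum transform $\hat v$ via a Riemann-sum estimate and exploit the easy but essential non-vanishing of $1-\Re\hat v$ away from the origin. The other point requiring care — that the $d$-fold mixed difference of $v(\cdot/L)$, and of $y^\alpha v(y)$, has $L$-uniformly bounded $\ell^1(\Z^d)$-norm — is elementary under the axis-parallel piecewise-continuity hypothesis on $\partial^d v/\partial x_1\cdots\partial x_d$, but it is exactly where that hypothesis enters, and it underlies both \eqref{eq:D_alpha} and parts (ii)--(iii) of \eqref{eq:AIR}. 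No tool beyond elementary Fourier analysis and summation by parts is needed, in keeping with the paper's overall method.
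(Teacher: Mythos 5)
Your proposal is correct in substance, but it takes a more self-contained route than the paper. The paper's proof of \eqref{eq:D_decay} is identical to yours; for \eqref{eq:D_alpha} the paper uses the same splitting of $\T^d$ at scale $\|k\|_\infty \sim 1/L$ but, instead of re-deriving the pointwise estimate, it imports the bound $|\hat D_\alpha(k)| \lesssim L^{|\alpha|}\prod_{i\in S}|Lk_i|^{-1}$ on the regions $R_S$ from \cite[(5.34)]{HS90a} (noting that the proof there generalises to any number of derivatives), and then integrates over each $R_S$; and for the infrared bound \eqref{eq:AIR} the paper gives no argument at all, simply citing \cite[Appendix~A]{HS02}. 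Your version reconstructs both imported inputs: summation by parts in each coordinate yields exactly the $\prod_j\min(1,(L|k_j|)^{-1})$ pointwise bound, and your three-scale analysis (pointwise cosine inequality with $\sigma^2\gtrsim L^2$ for $|k|\lesssim 1/L$, smallness of $|\hat D|$ for $|k|\gtrsim C/L$, and a Riemann-sum comparison with $1-\Re\hat v(Lk)$ on the intermediate annulus together with strict positivity of $1-\Re\hat v$ away from $0$) is essentially the argument carried out in the cited references. What the paper buys by citing is brevity and the avoidance of regularity bookkeeping; what you buy is self-containedness, at the cost of that bookkeeping. The one place where you should be careful is your claim that the $\ell^1$ bound on the $d$-fold mixed difference of $w_\alpha(\cdot/L)$ (and the $O(L^{-1})$ Riemann-sum error for $v(y)(1-\cos(Lk\cdot y))$) follows from piecewise continuity of $\partial^d v/\partial x_1\cdots\partial x_d$ alone: a discrete Leibniz expansion, and any Koksma--Hlawka-type error bound, also involve mixed differences of $v(\cdot/L)$ over proper subsets of the coordinates, i.e.\ implicitly lower-order mixed partials of $v$; this is exactly the bookkeeping the cited proofs of \cite[(5.34)]{HS90a} and \cite[Appendix~A]{HS02} carry out for the class of $v$ in Definition~\ref{def:D}, so it should be spelled out (or the references invoked) rather than asserted.
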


Together with \eqref{eq:AIR},
Assumption~\ref{ass:Fso} implies an infrared bound for $\hat F_z$.  To see this, we first write
\begin{equation}
\hat F_z(k) - \hat F_z(0)
= z ( 1 - \hat D(k) ) + ( \hat \Pi_z(0) - \hat \Pi_z(k) ).
\end{equation}
The first term is bounded from below using $z \ge 1$ and \eqref{eq:AIR},
and the second term is bounded in absolute value by $O(\beta) ( \abs k^2 \wedge 1 )$, by Taylor's theorem, symmetry, and \eqref{eq:Pi_decay}.
Since $\beta$ is small, this yields
\begin{align}
\label{eq:IR-so}
\hat F_z(k) - \hat F_z(0) \ge K_{\rm IR} (L^2 \abs k^2 \wedge 1)  \qquad (k \in \T^d)
\end{align}
for some $K_{\rm IR} > 0$.
Because of the two alternatives in the infrared bounds \eqref{eq:AIR} and \eqref{eq:IR-so},
we pay separate attention to the small ball
\begin{align}
\label{eq:BLdef}
    B_L & = \{k \in \R^d : \|k\|_\infty \le \pi,\; |k|<1/L\},
\end{align}
and to its complement.

\begin{proof}[Proof of Lemma~\ref{lem:AFEbds_SO}]
\emph{Bound on $\hat A_\gamma/\hat A$.}
There is nothing to prove for $\abs \gamma = 0$ since the ratio is then 1.
We will prove that, for $\abs \gamma \ge 1$,
\begin{align}
\label{eq:AA_L}
\Bignorm{ \frac{ \hat A_\gamma }{ \hat A } }_{q}
\lesssim L^{\abs \gamma - d/q }
\qquad (  \frac{ \abs \gamma \wedge 2 } d < q\inv < 1 ).
\end{align}
This is stronger than the desired \eqref{eq:AFEbds_SO2} by allowing more values of $q = q_1$.
It also implies $\norm{ \hat A_\gamma / \hat A }_{q} \lesssim 1$ when we restrict to $ \abs \gamma / d < q\inv < 1$.
By the infrared bound \eqref{eq:AIR},
\begin{equation}
\label{eq:AAIR}
    \Big| \frac{ \hat A_\gamma }{ \hat A }(k) \Big|
    \lesssim
    L^{-2}|k|^{-2} |\hat A_\gamma(k)| \1_{B_L} + |\hat A_\gamma(k)|,
\end{equation}
where $B_L$ is the small ball in \eqref{eq:BLdef}.
Since $\hat A_\gamma = - \muSO \hat D_\gamma$, by \eqref{eq:D_alpha}
the $L^q$ norm of the second term on the right-hand side is bounded by $L^{\abs \gamma - d/q}$, as required.
If $\abs \gamma = 1$, Taylor's Theorem and symmetry give $\abs { \hat A_\gamma (k) } \lesssim L^2  \abs k$, so the first term is bounded by $|k|^{-1}\1_{B_L}$, which has $L^q$ norm bounded by $L^{1 - d/q}$ for $q\inv > 1/ d$.
For the remaining case $\abs \gamma \ge 2$, it follows from
H\"older's inequality and \eqref{eq:D_alpha} that
\begin{align}
\Bignorm{ \frac{ \hat A_\gamma }{ \hat A } \1_{B_L}}_{q}
\lesssim \frac 1 {L^2} \norm{ \hat A_\gamma }_{r} \bignorm{  \abs k ^{-2} \1_{B_L} }_{p}
&\lesssim \frac 1 {L^2} (L^{\abs \gamma - d/r} ) L^{2 - d/p}
= L^{\abs \gamma - d/q}
\end{align}
for $q\inv = r \inv + p \inv$ with $ 0\le r\inv < 1$ and $p \inv > 2/d$,
which in particular holds for any $q\inv > 2/d$.
This completes the proof of \eqref{eq:AA_L}.

\medskip\noindent \emph{Bound on $\hat F_\gamma/\hat F$.}
The $\abs \gamma = 0$ case is again trivial.
We will prove that if $1 \le \abs \gamma < \half d + 2 + \rho$ then
\begin{align}
\label{eq:FF_L}
\Bignorm{ \frac{ \hat F_\gamma }{ \hat F } }_{q}
\lesssim L^{\abs \gamma - d/q } + \beta 	\lesssim 1
\qquad (  \frac{ \abs \gamma } d < q\inv < 1 ).
\end{align}
The second inequality holds because $\beta$ is small.
As in \eqref{eq:AAIR}, by the infrared bound \eqref{eq:IR-so},
\begin{equation} \label{eq:FF_decomp}
    \Big| \frac{ \hat F_\gamma }{ \hat F }(k) \Big|
    \lesssim
    L^{-2}|k|^{-2} |\hat F_\gamma(k)| \1_{B_L} + |\hat F_\gamma(k)|.
\end{equation}
Since $\hat F = 1 - z \hat D - \hat \Pi$ and $\abs \gamma \ge 1$, by the triangle inequality,
by the fact that $z \le O(1)$ by \eqref{eq:zbd},
by \eqref{eq:D_alpha}, and by the Fourier transform bound
Lemma~\ref{lem:FTbdd}(ii) applied with
$h(x)=i^{|\gamma|}x^\gamma \Pi(x)$,
\begin{align}
\label{eq:F_gamma_L}
\norm{ \hat F_\gamma  }_q
\le \abs z \norm { \hat D_\gamma }_q  + \norm{ \hat \Pi _\gamma }_q
\lesssim  L^{\abs \gamma - d/q} + \beta
	\qquad ( 0 \le q\inv < 1, \  q \inv > \frac{ \abs \gamma - 2 - \rho }d ).
\end{align}
This gives the desired bound for the second term of \eqref{eq:FF_decomp}.
If $\abs \gamma = 1$, Taylor's Theorem and symmetry give $\abs { \hat F_\gamma (k) } \lesssim (L^2+\beta)  \abs k$, so the first term is bounded by $(1 + \beta/L^2) \abs k \inv \1_{B_L}$, which has $L^q$ norm bounded by $(1+\beta/L^2)L^{1 - d/q} \lesssim L^{1 - d/q}$ when $q\inv > 1/ d$.
For the remaining case $\abs \gamma \ge 2$, we let $r\inv = (\abs \gamma - 2)/d$. It follows from H\"older's inequality and \eqref{eq:F_gamma_L} that
\begin{align}
\Bignorm{ \frac{ \hat F_\gamma }{ \hat F } \1_{B_L}}_q
&\lesssim \frac 1 {L^2} \norm{ \hat F_\gamma }_r \bignorm{  \abs k ^{-2} \1_{B_L} }_p
\lesssim \frac 1 {L^2} ( L^{\abs \gamma - d/r} +\beta ) L^{2 - d/p}
\lesssim L^{\abs \gamma - d/q}
\end{align}
for $q\inv = r \inv + p \inv$ and $p \inv > 2/d$. By the choice of $r$, the bound holds for $q\inv > \abs \gamma / d$. This completes the proof of \eqref{eq:FF_L}.

\medskip\noindent \emph{Bound on $\hat E_\gamma / \hat A \hat F$.}
Let $\abs \gamma < \half d + 2 + \rho$ and choose $0 < \sigma < \rho \wedge 2 $.
Our goal is to prove that
\begin{align}
\label{eq:EAF_L}
\Bignorm{ \frac{ \hat E_\gamma }{ \hat A \hat F }}_q
\lesssim \beta
\qquad ( \frac{ 2 - \sigma + \abs \gamma } d < q\inv < 1 ),
\end{align}
which establishes \eqref{eq:AFEbds_SO},
and to improve the bound when $\abs \gamma \ne 0$ to
\begin{align} \label{eq:EAF_boot}
\Bignorm{ \frac{ \hat E_\gamma }{ \hat A \hat F }}_q
\lesssim \beta   L^{2-\sigma + \abs\gamma - d/q} +  \beta_1
\qquad ( \frac{ 2 - \sigma + \abs \gamma } d < q\inv < 1 ),
\end{align}
which is \eqref{eq:AFEbds_SO2}.

It follows from the formula $\hat E = \hat A_{\mu} - \lambda \hat F$ in \eqref{eq:G_isolate_k},
together with the fact that $\mu = 1-\lambda \hat F(0)$ by \eqref{eq:lambda_z-so}, that
\begin{equation}
\label{eq:EPi}
    \hat E = (1-\lambdaSO)(1- \hat D) - \lambdaSO \hat \Pi(0) \hat  D + \lambdaSO \hat \Pi.
\end{equation}
Also, by the infrared bounds for $\hat A$ and $\hat F$,
\begin{equation}
\label{eq:EAF_decomp}
    \Big| \frac{ \hat E_\gamma }{ \hat A \hat F }(k) \Big|
    \lesssim
    L^{-4}|k|^{-4} |\hat E_\gamma(k)| \1_{B_L} + |\hat E_\gamma(k)|.
\end{equation}
We will first show that the second term on the right-hand side of
\eqref{eq:EAF_decomp} obeys \eqref{eq:EAF_L} and \eqref{eq:EAF_boot}, and then show that
the first term on the right-hand side of
\eqref{eq:EAF_decomp} obeys the stronger bound \eqref{eq:EAF_boot} even when
$|\gamma|=0$.

For the second term on the right-hand side of \eqref{eq:EAF_decomp},
we use \eqref{eq:EPi}, the triangle inequality,
$\lambdaSO = 1 + O(\beta)$ (by \eqref{eq:lambda1}), and $|\hat \Pi(0)|\lesssim\beta$,
to see that
\begin{align} \label{eq:E_gamma_L0}
\norm{ \hat E_\gamma  }_q
\lesssim
O(\beta) ( 1 + \norm { \hat D_\gamma }_q ) + \abs \lambdaSO \norm{ \hat \Pi _\gamma }_q .
\end{align}
By \eqref{eq:D_alpha}, $\norm { \hat D_\gamma }_{q} \lesssim L^{\abs \gamma - d/q}$.
We bound the norm of $\hat \Pi_\gamma$
using \eqref{eq:Pi_decay} and Lemma~\ref{lem:FTbdd}, to see that
\begin{align}
\label{eq:E_gamma_L}
\norm{ \hat E_\gamma  }_{q}
\lesssim \beta  ( 1 + L^{\abs \gamma - d/q} )
	\qquad ( 0 \le q\inv < 1,\ q\inv > \frac{ \abs \gamma - 2 - \rho} d).
\end{align}
In particular, \eqref{eq:E_gamma_L} holds for $( 2 - \sigma + \abs \gamma) / d < q\inv < 1$,
and the norm is bounded by a multiple of $\beta$ since $\sigma \le 2$.
To improve the bound when $\abs \gamma \ne 0$, we still use  \eqref{eq:EPi},
but now the contribution from the term $(1-\lambda)(1)$
vanishes because there is at least one derivative taken.
For the same reason, we can write $\hat \Pi _\gamma = \grad^\gamma ( \hat \Pi - \Pi(0) )$.
Since $\hat \Pi - \Pi(0)$ is the Fourier transform of $\Pi(x) - \Pi(0) \delta_{0,x}$, Lemma~\ref{lem:FTbdd} and the $x\ne0$ part of \eqref{eq:Pi_decay} imply that
\begin{align}
\norm{ \hat E_\gamma } _q
\lesssim \beta \norm{ \hat D_\gamma }_q +  \norm{ \hat \Pi_\gamma }_q
\lesssim \beta L^{\abs \gamma - d/q} + \beta_1,
	\qquad ( 0 \le q\inv < 1,\ q\inv > \frac{ \abs \gamma - 2 - \rho} d),
\end{align}
which implies that $|\hat E_\gamma|$ obeys
the upper bound in \eqref{eq:EAF_boot}, since $\sigma < \rho$ and $\sigma \le 2$.

For the first term on the right-hand side of \eqref{eq:EAF_decomp}, we will prove that
the stronger bound \eqref{eq:EAF_boot} holds for all $\gamma$.
Consider first the case $\abs \gamma < 2 + \sigma$.
It follows from the $x$-space version of \eqref{eq:EPi}, together with
the decay of $D$ in \eqref{eq:D_decay} and of $\Pi$ in \eqref{eq:Pi_decay}
that
\begin{align}
\label{eq:E_decay}
\abs{ E(x) } \lesssim \frac{ \beta  L^{2 + \rho_2} }{ \xvee^{d+2+\rho_2} }
\end{align}
(we have relaxed the decay of $\Pi$ because it is costly to make $D$ decay).  Then Lemma~\ref{lemma:Ek} with $\rho_2$ in place of $\rho$ gives
(it is here that we require the strict inequality $\sigma < \rho_2$)
\begin{align}
    | \hat E_\gamma(k) |
    \lesssim  \beta  L^{2+\rho_2} \abs k^{2+\sigma - \abs \gamma} .
\end{align}
The $L^q$ norm of $\abs k^{2+\sigma-\abs\gamma -4} = \abs k^{-(2-\sigma + \abs \gamma)}$ on $B_L$ is of order $L^{2 - \sigma + \abs \gamma - d/q}$, so the $L^q$ norm of the first term on
on the right-hand side of \eqref{eq:EAF_decomp} is bounded, in this case, by
the desired
\begin{align}
\frac{ \beta L^{2+\rho_2} }{ L^4 } L^{2 - \sigma + \abs \gamma - d/q}
&\le  \beta L^{2 - \sigma + \abs \gamma - d/q} \le \beta
	\qquad (q\inv > \frac{ 2 - \sigma + \abs \gamma } d).
\end{align}
For the remaining case $2 + \sigma \le \abs \gamma < \half d + 2 + \rho$, H\"older's inequality and \eqref{eq:E_gamma_L} imply that the $L^q$ norm of the first term on the
right-hand side of \eqref{eq:EAF_decomp} is bounded above by
\begin{align}
\frac 1 {L^4} \norm{ \hat E_\gamma }_r \bignorm{  \abs k ^{-4} \1_{B_L} }_p
&\lesssim \frac 1 {L^4} [ \beta (1+L^{\abs \gamma - d/r}) ] L^{4 - d/p}
\lesssim \beta L^{\abs \gamma - d/q}
\end{align}
for $q\inv = r \inv + p \inv$, $r\in (1, \infty]$,
$ \frac { \abs \gamma - 2 - \rho } d < r \inv \le  \frac { \abs \gamma }  d$, and $p \inv > 4/d$. In particular, since $\sigma < \rho$, the bound holds for all $q\inv > ( 2 - \sigma + \abs \gamma)/d$. The desired bound \eqref{eq:EAF_boot} for
the first term on the right-hand side of \eqref{eq:EAF_decomp}
then follows from $\sigma \le 2$.
This completes the proof of \eqref{eq:EAF_L}
and concludes the proof of the lemma.
\end{proof}

\section{Inhomogeneous deconvolution: proof of Proposition~\ref{prop:general-1}}
\label{sec:general}

We now prove Proposition~\ref{prop:general-1}, which concerns the
inhomogeneous convolution equation
\begin{equation}
\label{eq:Hh4}
    \HH_z = h_z + zD*h_z*\HH_z.
\end{equation}
In fact we prove a stronger proposition, with
arbitrary small
$\beta_0$ and $\beta_1$ rather than the specific choices in \eqref{eq:h_decay-1}.
We write $\theta=2+\rho$.

\begin{proposition}
\label{prop:general}
Suppose the function $h_z: \Z^d \to \R$ is
$\Z^d$-symmetric and satisfies
\begin{align} \label{eq:h_decay}
\abs{ h_z(x) - \delta_{0,x} }
\le \beta_0 \delta_{0,x} + \frac{ \beta_1 } { \xvee^{d+\theta}  }
\end{align}
with $\theta >  0$  and with
$\beta=\beta_0 \vee \beta_1 \ge 0$ sufficiently small.
Then there exists a $\Z^d$-symmetric function $\Phi_z : \Z^d \to \R$
for which $\HH_z$ of \eqref{eq:Hh4} satisfies $F_z * \HH_z = \delta$ with
\begin{align} \label{eq:h}
F_z = \delta - zD - \Phi_z,
\qquad  \abs{ \Phi_z(x) } \le  O(\beta) \delta_{0,x} + \frac{ O(\beta_1) } { \xvee^{d+\theta}  }.
\end{align}
\end{proposition}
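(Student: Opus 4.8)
The plan is to solve the inhomogeneous equation \eqref{eq:Hh4} by passing to Fourier space, where it becomes $\hat{\HH}_z = \hat{h}_z + z\hat D \hat h_z \hat\HH_z$, i.e. $\hat\HH_z(k) = \hat h_z(k)/(1 - z\hat D(k)\hat h_z(k))$, and then identifying $F_z = \delta - zD - \Phi_z$ by reading off $\hat F_z(k) = 1/\hat\HH_z(k) = (1 - z\hat D(k)\hat h_z(k))/\hat h_z(k)$. Writing $\hat h_z = 1 + \hat g_z$ where $g_z(x) = h_z(x) - \delta_{0,x}$ obeys the decay bound $|g_z(x)| \le \beta_0\delta_{0,x} + \beta_1\xvee^{-(d+\theta)}$, a short algebraic manipulation gives
\begin{equation}
\label{eq:Phi-formula}
\hat\Phi_z = 1 - zD - \hat F_z = -\hat g_z - z\hat D + z\hat D\hat h_z + \frac{z\hat D\hat h_z\,\hat g_z - \hat g_z}{1 + \hat g_z},
\end{equation}
which after simplification expresses $\hat\Phi_z$ as a combination of $\hat g_z$, $\hat D\hat g_z$, and the correction term $\hat g_z^2 \cdot (\text{bounded})/(1+\hat g_z)$. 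The key structural point to extract is that $\Phi_z$ is built from: (a) linear terms $-g_z/(1+\hat g_z)^{\vee}$-type expressions whose $x$-space decay is governed by $g_z$; (b) a convolution $D * g_z$, which by the rapid decay of $D$ (Definition~\ref{def:D}, or Lemma~\ref{lemma:D_L} in the spread-out case, though here we only need that $D$ is a probability supported near the origin) does not degrade the $\xvee^{-(d+\theta)}$ tail of $g_z$; and (c) quadratic-and-higher terms $g_z * g_z * (\cdots)$, which decay at least as fast since convolution of two functions with $\xvee^{-(d+\theta)}$ tails and finite mass again has an $\xvee^{-(d+\theta)}$ tail (for $\theta>0$, $d\ge 1$). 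The $\delta_{0,x}$ part of $g_z$ produces the $O(\beta)\delta_{0,x}$ term in $\Phi_z$, while each tail contribution carries a factor $\beta_1$ and possibly extra factors of $\beta$, hence is $O(\beta_1)$; smallness of $\beta$ guarantees $1 + \hat g_z$ is bounded away from zero so the geometric series converges.

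Concretely, I would first record that $\|\hat g_z\|_\infty \le \|g_z\|_1 \le \beta_0 + C\beta_1 = O(\beta)$ (using $\theta>0$ so $\sum_x \xvee^{-(d+\theta)} < \infty$), so that $|1 + \hat g_z(k)| \ge 1/2$ uniformly once $\beta$ is small enough; this makes $\HH_z = \delta + g_z + (\text{convergent Neumann series in } zD*h_z)$ well-defined and summable, and $F_z*\HH_z = \delta$ holds by construction. Then I would expand $\Phi_z = \delta + zD - F_z$ in $x$-space using \eqref{eq:Phi-formula}: the term $-z D*g_z$ (and its iterates) are handled by a convolution lemma stating that if $|a(x)| \le A\xvee^{-(d+\theta)}$ and $b\ge 0$ has $\sum b = 1$ with $b$ supported in a bounded set (or just $b \in \ell^1$ with light tails), then $|(b*a)(x)| \le C A \xvee^{-(d+\theta)}$ — this is the one genuinely model-flavoured estimate, and it is elementary. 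The remaining terms are of the form $g_z * g_z * w$ with $w$ a summable kernel of the same convolution type, and the same lemma (applied twice, plus $\|g_z\|_1 = O(\beta)$ to absorb one copy into the constant) gives $|g_z * g_z * w|(x) \le O(\beta_1)\xvee^{-(d+\theta)}$. Summing the finitely-structured linear part and the geometrically convergent higher-order part yields $|\Phi_z(x)| \le O(\beta)\delta_{0,x} + O(\beta_1)\xvee^{-(d+\theta)}$, which is \eqref{eq:h} with $\theta = 2+\rho$; $\Z^d$-symmetry of $\Phi_z$ follows from that of $h_z$ and $D$.

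The main obstacle is the convolution tail bound: showing that convolving a function with an $\xvee^{-(d+\theta)}$ tail against a bounded-support probability kernel (or two such functions against each other) preserves the polynomial tail, with a constant that is uniform in $L$ and $z$. The standard estimate splits $\Z^d$ according to whether the convolution variable is within $\frac12|x|$ of the origin or not; in the near region one uses $\xvee^{-(d+\theta)}$ of the far argument times total mass, and in the far region one uses the decay of the kernel (here this is where $D$'s bounded support, or its rapid decay from Lemma~\ref{lemma:D_L}, is essential). One must be careful that, for the quadratic term, both factors decay, so the split is symmetric and one gains a factor $\|g_z\|_1$; also one must check that the finitely many linear terms in \eqref{eq:Phi-formula} do not accidentally combine to cancel the $\delta_{0,x}$ structure — they do not, since $z\hat D$ is the Fourier transform of something supported away from $0$ in the relevant sense and $zD*g_z$ has no $\delta_{0,x}$ component beyond $O(\beta_1)$. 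Once the convolution lemma is in hand, everything else is routine bookkeeping of powers of $\beta$ versus $\beta_1$.
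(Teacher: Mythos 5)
Your overall strategy --- invert $h_z$ by a Neumann series and control tails by a weighted convolution estimate, separating the $\delta_{0,x}$ part of $g_z=h_z-\delta$ to upgrade $O(\beta)$ to $O(\beta_1)$ off the diagonal --- is exactly the paper's strategy; the paper merely packages your ``convolution lemma'' as submultiplicativity of the Banach-algebra norm $\norm{v}_\zeta=\max\{2^{\zeta+1}\sum_x|v(x)|,\sup_x|x|^\zeta|v(x)|\}$ with $\zeta=d+\theta$, and obtains the off-diagonal improvement from the recursion $g_{n+1}=g_n*f+f^{*n}(0)g_1$ on $x\neq 0$ (with $f=\delta-h$), giving $\norm{g_n}_\zeta\le n\norm{f}_\zeta^{n-1}\norm{g_1}_\zeta$ and hence a summable series of order $\beta_1$. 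However, your central formula \eqref{eq:Phi-formula} contains an algebra error, and the error is not harmless as written. The correct computation from $\hat F_z=1/\hat\HH_z=(1-z\hat D\hat h_z)/\hat h_z=1/\hat h_z-z\hat D$ gives
\begin{equation*}
\hat\Phi_z \;=\; 1-z\hat D-\hat F_z \;=\; 1-\frac{1}{\hat h_z}\;=\;\frac{\hat g_z}{1+\hat g_z},
\qquad\text{i.e.}\qquad \Phi_z=\delta-h_z^{-1},
\end{equation*}
so $D$ cancels completely and $\Phi_z$ is a function of $h_z$ alone. Your formula instead leaves residual terms proportional to $\hat D\hat g_z$, i.e.\ convolutions $D*g_z$ in $x$-space.

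This matters because the step you rely on for those terms --- that convolving against $D$ ``does not degrade the $\xvee^{-(d+\theta)}$ tail'' --- is false with constants uniform in $L$, and $L$-uniformity is essential here (it is what feeds Proposition~\ref{prop:general-1} and the bootstrap). Indeed, writing $\tilde g$ for the off-diagonal part of $g_z$, for $|x|$ of order $L$ one has $(D*|\tilde g|)(x)\ge D(x-e_1)|\tilde g(e_1)|\gtrsim \beta_1 L^{-d}$ for the uniform kernel, whereas the target bound is $C\beta_1|x|^{-(d+\theta)}\approx C\beta_1L^{-(d+\theta)}$; no $L$-independent $C$ works, and the usual near/far splitting loses exactly this factor $L^{\theta}$ in the region $|x|\lesssim L$. (The same loss is why the paper is careful elsewhere, cf.\ \eqref{eq:E_decay}, about the cost of making $D$ decay.) Fortunately the fix is only to do the algebra correctly: once $\Phi_z=\delta-h_z^{-1}=-\sum_{n\ge1}(\delta-h_z)^{*n}$, only self-convolutions of $g_z$ appear, your convolution lemma applies with absolute constants, and the rest of your bookkeeping of $\beta$ versus $\beta_1$ goes through and coincides with the paper's proof. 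One further small point: the paper verifies $F_z*\HH_z=\delta$ entirely in $x$-space via the factorisation $F_z=h_z^{-1}*(\delta-zD*h_z)$, which avoids assuming that $\hat\HH_z$ is defined (i.e.\ that $\HH_z$ is summable), an assumption your Fourier-space derivation implicitly makes.
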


The proof uses a Banach algebra, as in \cite{BHK18}.
Given $\zeta>0$, we define a norm on functions $v: \Z^d \to \R$ by
\begin{align}
\label{eq:def:norm}
\norm v_\zeta
= \max \Big \{ 2^{\zeta + 1 }  \sum_{x\in\Z^d} \abs {v(x)}, \;  \sup_{x\in \Z^d}  \abs x^\zeta  \abs{ v(x) } \Big \}.
\end{align}
Then $ \norm{ u*v}_\zeta \le \norm u _ \zeta \norm v_\zeta$ for all $u$ and $v$, so the space $\{ v : \norm v _ \zeta < \infty \}$ is a Banach algebra with
product given by convolution.
If $\norm{ v - \delta }_\zeta < 1 $, then $v$ has a deconvolution
$v\inv =\sum_{n=0}^\infty (\delta -v)^{*n}$
given by a convergent Neumann series.  Indeed, by writing $v=\delta-(\delta -v)$, we have
\begin{align}
    v*v^{-1}  & = \sum_{n=0}^\infty (\delta -v)^{*n} - \sum_{n=1}^\infty (\delta -v)^{*n}
    =\delta.
\end{align}
Also, $v\inv$ satisfies
\begin{align}
\label{eq:Neumann}
\norm {v\inv - \delta }_\zeta
& \le
\sum_{n=1}^\infty \|\delta -v\|_\zeta^{n} =
\frac{\norm{ v - \delta }_\zeta}{1 - \norm{ v - \delta }_\zeta}.
\end{align}

\begin{proof}[Proof of Proposition~\ref{prop:general}]
We drop subscripts $z$ from the notation as they play no role in the proof.
By \eqref{eq:h_decay},
we have $\norm{ h - \delta }_\zeta \le O(\beta) < 1$ with $\zeta = d+ \theta$, so the deconvolution $h^{-1}$ exists and \eqref{eq:Neumann} holds with $v = h$.
Define $F = \delta - zD - \Phi$ with $\Phi = \delta - h\inv$.
Then
\begin{align} \label{eq:Fz_equiv}
F = (\delta  - \Phi) - zD
= h\inv - zD
= h\inv * (\delta - zD*h),
\end{align}
so that, using \eqref{eq:Hh4} in the second equality,
\begin{align}
F * \HH
= h\inv * (\HH - zD*h * \HH)
= h\inv * h = \delta.
\end{align}

For the decay of $\Phi$, we first use $\norm{ h-\delta }_\zeta \le O(\beta)$, \eqref{eq:Neumann}, and the definition of $\norm{\cdot}_\zeta$ to get
\begin{equation} \label{eq:hinvbd}
\max \Big \{ 2^{\zeta+1} \abs{ \Phi (0) },\; \sup_{x\in \Z^d} \abs x^\zeta \abs{ \Phi(x) } \Big \}
\le \norm{ \Phi }_\zeta
= \norm{ h \inv - \delta }_\zeta
\le \frac{ O(\beta) }{ 1-O(\beta) }.
\end{equation}
This proves that
$\abs{ \Phi (x)  } \le  O(\beta) \nnnorm x^{-\zeta} $ and gives the
bound on $\Phi(0)$ in \eqref{eq:h}, but for $x \neq 0$ we wish to improve the $O(\beta)$
to $O(\beta_1)$.

We make the improvement as follows.
Let $f=\delta -h$, so that
\begin{align}
\Phi = \delta - h\inv = - \sum_{n=1}^\infty (\delta - h)^{*n}
	= - \sum_{n=1}^\infty f^{*n}.
\end{align}
For $x \neq 0$, we have $\Phi(x) = \Phi(x) - \Phi(0)\delta(x)$.
With $g_n=f^{*n}-f^{*n}(0)\delta$,
we write this as
\begin{equation}
\label{eq:Phi0}
    \Phi(x) - \Phi(0)\delta(x) = -\sum_{n=1}^\infty g_n(x)
    \qquad (x \neq 0).
\end{equation}
By definition, for $n \geq 1$,
\begin{equation}
    g_{n+1}(x)=(g_n*f)(x) + f^{*n}(0)g_1(x)
    \qquad (x\ne 0).
\end{equation}
Since $g_{n+1}(0)=0$ and $|f^{*n}(0)| \le \|f^{*n}\|_\zeta \le \|f\|_\zeta^n$,
it follows from the triangle inequality that
\begin{equation}
    \|g_{n+1}\|_\zeta
    \le
    \|f\|_\zeta\|g_n\|_\zeta   + \|f\|_\zeta^{n} \|g_1\|_\zeta.
\end{equation}
It then follows by iteration that
\begin{equation}
    \|g_{n}\|_\zeta
    \le
    n\|f\|_\zeta^{n-1}  \|g_1\|_\zeta
    \qquad (n \ge 1).
\end{equation}
Since we have $\norm f_\zeta \le O(\beta)$ and $\norm{ g_1 }_\zeta \le O(\beta_1)$ by the definition of $f$ and \eqref{eq:h_decay},
we can bound the norm of the sum in \eqref{eq:Phi0} as
\begin{equation}
    \|\Phi - \Phi(0)\delta \|_\zeta \le O(\beta_1),
\end{equation}
which implies the desired bound
 $\abs{ \Phi(x) } \le O(\beta_1) \abs x^{-\zeta}$ for $x\ne 0$.
This concludes the proof.
\end{proof}

For lattice trees and lattice animals, we cannot verify \eqref{eq:h_decay} directly, because
their one-point function plays a role without counterpart in the other models.
However, only a small adjustment is needed to apply our results.
The lace expansion for the two-point function $T_p$ of these models has the form
\begin{equation}
    T_p = t_p + pD * t_p * T_p.
\end{equation}
We divide out the one-point function as in \cite[(1.23)]{HHS03}.
We set $\tau_p = t_p(0)$, $z=p\tau_p$, $h_z=t_p/\tau_p$, and $\HH_z= T_p/\tau_p$.
This transforms the above equation to \eqref{eq:Hh4}
with $h_z$ now a small perturbation of $\delta$,
and Proposition~\ref{prop:general} can be applied.

\begin{appendix}

\section{Spread-out Green function}
\label{sec:so-Green}

We now  prove Proposition~\ref{prop:so-nn}, which asserts that
if $D$ is given by Definition~\ref{def:D}, 
if $d>2$, if $L \ge L_0$, and if $\eps>0$, then
the critical spread-out Green function $S_1(x)$ satisfies
\begin{align}
\label{eq:CL-uni3}
    S_1(x) & =  \delta_{0,x} + \frac{1}{\sigma^2} C_1(x)
    +O\( \frac{1}{L^{1-\eps} \xvee^{d-1}} \)
\end{align}
and $S_1(x) \leq  \delta_{0,x}+ K_S L^{-(2-\eps)} \xvee^{-(d-2)}$ for some $K_S=K_S(\eps)$,
with constants uniform in $L$ but dependent on $\eps$.
Our proof uses the same steps used to prove Theorem~\ref{thm:so}, and is conceptually and technically simpler than the proof
using intricate Fourier analysis in \cite{HHS03}.
We use the conclusion of Lemma~\ref{lemma:D_L} repeatedly. 
Although Lemma~\ref{lemma:D_L} is proved later, there
is no circularity because the proof of Lemma~\ref{lemma:D_L} is independent of
the proofs here.

We isolate the leading term as follows.
Recall that $\Dnn(x) = \frac{1}{2d} \1_{|x|=1}$ and that $\sigma^2$ is the variance of
$D$.
Let $A=\delta-\Dnn$, $F=\delta-D$,
$E=A - \sigma^{-2}
F$,
and $f=C_1*E*S_1$.
Then a similar calculation as in \eqref{eq:GC},
with $\lambda = \sigma^{-2}$, with $S_\mu$ replaced by $C_1$, and
with $G_z$ replaced by $S_1$, gives
\begin{equation}
S_1 = \sigma^{-2} C_1 + f,
\end{equation}
with $E(x)$ having vanishing zeroth and second moments as in \eqref{eq:E_condition}.
We further extract a Kronecker delta using $(\delta - D) * S_1 = \delta$, to get
\begin{align}
\label{eq:G_decomp_S-intro}
S_1 &= \delta + D *S_1
= \delta  + D*(\sigma^{-2} C_1 + f)  	
= \delta +\sigma^{-2} C_1 + \varphi ,
\end{align}
where
\begin{align}
\label{eq:phidef-intro}
    \varphi &=  D*f  - \sigma^{-2} C_1*F .
\end{align}
Now the error term $\varphi$ plays the role played by $f$ in Section~\ref{sec:proof-so}.

\begin{lemma}\label{lemma:phi}
Let $\eps>0$.
If $L\ge L_0$ with $L_0$ sufficiently large (depending only on $d,v$),
then the following statements hold.
The function $\hat \varphi$ is $d-1$ times weakly differentiable, and for any multi-index $\alpha$ with $\abs \alpha \le d-1$,
\begin{align}
\label{eq:phibd}
\norm{ \hat \varphi_\alpha }_1 \lesssim L^{-(1-\eps)},
\end{align}
with the constant independent of $L$ but depends on $\eps$.
\end{lemma}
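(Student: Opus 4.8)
The plan is to prove Lemma~\ref{lemma:phi} by mimicking the proof of Proposition~\ref{prop:f_SO}, applying the product and quotient rules for weak derivatives to the Fourier transform of $\varphi$ and then estimating each resulting term with H\"older's inequality. First I would write, in Fourier space,
\begin{equation}
\hat\varphi = \hat D \hat f - \sigma^{-2} \hat C_1 \hat F,
\qquad
\hat f = \frac{\hat D \hat E}{\hat A_{\Dnn} \hat F},
\end{equation}
where I abbreviate $\hat A_{\Dnn} = 1 - \hat \Dnn$ (so $\hat C_1 = 1/\hat A_{\Dnn}$), and where $\hat E = \hat A_{\Dnn} - \sigma^{-2} \hat F$ has vanishing zeroth and second moments. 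Thus
\begin{equation}
\hat\varphi = \frac{\hat D^2 \hat E}{\hat A_{\Dnn} \hat F} - \frac{\sigma^{-2}\hat F}{\hat A_{\Dnn}},
\end{equation}
and both summands are quotients whose $\alpha$-th weak derivative (for $|\alpha|\le d-1$) expands by the product/quotient rules into linear combinations of terms of the form $\big(\prod \hat\Dnn_{\delta_n}/\hat A_{\Dnn}\big)\big(\prod \hat D_{\eta_n}\big)\big(\hat E_{\alpha_2}/(\hat A_{\Dnn}\hat F)\big)\big(\prod \hat F_{\gamma_m}/\hat F\big)$ for the first summand, and similarly for the second. As in Section~\ref{sec:proof-so}, these expansions are justified \emph{a posteriori} once every such term is shown to be integrable.

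The key estimates are exactly the analogues of Lemma~\ref{lem:AFEbds_SO} but now with $\sigma^2 \asymp L^2$ playing the role of the scaling. I would record: from Lemma~\ref{lemma:D_L}, $\|\hat D_\eta\|_q \lesssim L^{|\eta| - d/q}$; from the nearest-neighbour infrared bound $\hat A_{\Dnn}(k) \gtrsim |k|^2$, the standard facts $\|\hat\Dnn_\delta/\hat A_{\Dnn}\|_{q_1} \lesssim 1$ for $|\delta|/d < q_1^{-1} < 1$; from the infrared bound \eqref{eq:AIR} for $\hat F = 1 - \hat D$, the bound $\|\hat F_\gamma/\hat F\|_{q_1}$ obeying the same type of estimate as in \eqref{eq:AA_L} with the $L$-power $L^{|\gamma|-d/q_1}$ (in particular $\lesssim 1$ for $|\gamma|/d < q_1^{-1} < 1$); and, crucially, a bound on $\hat E_{\alpha_2}/(\hat A_{\Dnn}\hat F)$. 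For the last one, the $x$-space decay of $E$ is $|E(x)| \lesssim L^{2+\rho_2}\xvee^{-(d+2+\rho_2)}$ for any fixed $\rho_2<2$ (here $\Pi$ is absent so one only needs to make $D$ decay, at cost $L^{a}$), so Lemma~\ref{lemma:Ek} gives $|\hat E_{\alpha_2}(k)| \lesssim L^{2+\sigma'}|k|^{2+\sigma'-|\alpha_2|}$ for $\sigma'<2$ arbitrarily close to $2$; dividing by $\hat A_{\Dnn}\hat F \gtrsim |k|^2 (L^2|k|^2\wedge 1)$ and integrating over the small ball $B_L$ and its complement separately yields, after optimising, $\|\hat E_{\alpha_2}/(\hat A_{\Dnn}\hat F)\|_{q_2} \lesssim L^{-(2-\sigma')+ |\alpha_2| - d/q_2 + (\text{correction})}$; the net effect is a factor $L^{-1+\eps}$ once one also collects the $L$-powers from the $\hat D$ factors and from $1/\hat A_{\Dnn}$. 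The second summand $\sigma^{-2}\hat F/\hat A_{\Dnn} = \sigma^{-2}(1-\hat D)/(1-\hat\Dnn)$ contributes $\sigma^{-2}\asymp L^{-2}$ together with $\|\hat D_\eta\|_q\lesssim L^{|\eta|-d/q}$, which is comfortably $\lesssim L^{-(1-\eps)}$ for $|\alpha|\le d-1$. Putting these together with H\"older, one checks that for each term the exponent of $L$ is at most $-(1-\eps)$ when $|\alpha|\le d-1$, and that $r=1$ is an admissible H\"older exponent precisely because $|\alpha|\le d-1 < d-2+\rho_2$ (with $\rho_2$ close to $2$), giving $\|\hat\varphi_\alpha\|_1 \lesssim L^{-(1-\eps)}$.

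The main obstacle I expect is the careful bookkeeping of $L$-powers: unlike in Theorem~\ref{thm:so}, where the small parameter $\beta$ did all the work, here the smallness comes entirely from competing powers of $L$ (gains of $L^{-2}$ from $\sigma^{-2}$ and from the infrared bound on $\hat F$, against losses of $L^{|\eta|}$ from differentiating $\hat D$ and $L^{2+\rho_2}$ from forcing $D$ to decay), and one must verify that in \emph{every} term of the product/quotient expansion these balance to leave a net $L^{-(1-\eps)}$. The borderline case is $|\alpha| = d-1$, where one is using $n_d = d-1$ rather than $d-2$; this is allowed only because, with $\Pi$ absent, one effectively has $\rho$ as large as desired (the decay of $E$ is limited only by that of $D$, which Lemma~\ref{lemma:D_L} makes as fast as we like at polynomial-in-$L$ cost), so \eqref{eq:ndub} permits $n_d = d-1$. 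One also must take the $\eps$ in the statement to absorb the strict inequalities in the H\"older conditions and in the choice $\sigma' < 2$; this is where the $\eps$-dependence (but $L$-uniformity) of the constant enters. The final ``moreover'' claim of Lemma~\ref{lem:Graf}, that $|x|^{d-1}\varphi(x)\to 0$, is immediate once the weak differentiability up to order $d-1$ is established, so no extra argument is needed for the qualitative decay in \eqref{eq:CL-uni3}.
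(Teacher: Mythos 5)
Your overall strategy for the first summand $\widehat{D*f}=\hat D\,\hat E/(\hat A\hat F)$ is the paper's (note the small slip: since $f=C_1*E*S_1$ we have $\hat f=\hat E/(\hat A\hat F)$, with no extra $\hat D$ inside $\hat f$), and the mechanism you gesture at is the right one: the factor $\hat D_{\alpha_2}$ supplies the smallness via $\|\hat D_{\alpha_2}\|_{d/(|\alpha_2|+1-\eps)}\lesssim L^{-(1-\eps)}$, while $\hat E_{\alpha_1}/(\hat A\hat F)$ and the other ratios are only $O(1)$ in the appropriate $L^q$ norms (your claim that the $\hat E$--factor itself contributes a negative power of $L$ is not needed and not correct in general). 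Two bookkeeping errors should be fixed there: $E=A-\sigma^{-2}F$ carries the prefactor $\sigma^{-2}\asymp L^{-2}$ on $D$, so the correct $x$-space bound is $\abs{E(x)}\lesssim L^{2}\xvee^{-(d+4)}$ (taking $\rho=2$ exactly), not $L^{2+\rho_2}\xvee^{-(d+2+\rho_2)}$; with your larger constant the $L^q(B_L)$ estimate for $\hat E_\gamma/(\hat A\hat F)$ does not close at the borderline exponents.

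The genuine gap is the second summand $\sigma^{-2}\hat C_1\hat F$, which you dispose of in one sentence. After the quotient rule, a typical term is $\hat F_{\alpha_2}\,\hat A^{-1}\prod_n(\hat \Dnn_{\delta_n}/\hat A)$ with $\sum_n|\delta_n|=|\alpha_1|$, and the na\"ive H\"older budget is $q^{-1}>|\alpha_2|/d$ (or $0$) for $\hat F_{\alpha_2}$, $p^{-1}>2/d$ for $\hat A^{-1}$, and $r_n^{-1}>|\delta_n|/d$; these lower bounds already sum to at least $(|\alpha|+1)/d\ge 1$ when $|\alpha_2|\le 1$ and $|\alpha|\ge d-2$, so H\"older cannot be applied and "comfortably $\lesssim L^{-(1-\eps)}$" fails exactly at the orders $|\alpha|\in\{d-2,d-1\}$ that the lemma requires. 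The missing ingredient is a \emph{fractional} Taylor expansion of $\hat F$ exploiting $\sum_x F(x)=0$: for $|\alpha_2|\le 1$,
\begin{equation}
\abs{\hat F_{\alpha_2}(k)}\lesssim \sum_{x\in\Z^d}\abs{F(x)}\,\abs{x}^{1+\eps}\abs{k}^{1+\eps-|\alpha_2|}\lesssim L^{1+\eps}\abs{k}^{1+\eps-|\alpha_2|},
\end{equation}
which trades an affordable $L^{\eps}$ (absorbed by the prefactor $\sigma^{-2}\asymp L^{-2}$, leaving $L^{-(1-\eps)}$) for the extra $\abs{k}^{1+\eps-|\alpha_2|}$ of integrability needed to beat $\abs{k}^{-(2+|\alpha_1|)}$ near $k=0$; a second-moment expansion would cost $L^{2}$ and yield no gain, while the trivial bound $\abs{\hat F}\le 2$ is not integrable against $\hat C_{\alpha_1}$. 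This interpolation is precisely where the $\eps$ in $L^{-(1-\eps)}$ originates for this term, and without it your argument does not prove the lemma.
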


\begin{proof}[Proof of Proposition~\ref{prop:so-nn} assuming Lemma~\ref{lemma:phi}]
It follows from Lemma~\ref{lem:Graf} and \eqref{eq:phibd}
that $\varphi(x) = O(L^{-(1-\eps)} \nnnorm x^{-(d-1)} )$, which implies \eqref{eq:CL-uni3}.
Also, the bound  $S_1(x) \leq  \delta_{0,x}+ K_S L^{-(2-\eps)} \xvee^{-(d-2)}$
follows from \eqref{eq:CL-uni3} and the asymptotic formula for $C_1(x)$ in \eqref{eq:C1_asymp} when $\abs x > L$, and follows from $S_1(x) = \delta_{0,x} + O(L^{-d})$
 when $\abs x \le L$ (this simple fact is proved in \cite[Section~6.1]{HHS03}).
\end{proof}

To prove Lemma~\ref{lemma:phi}, we use the following analogue of
Lemma~\ref{lem:AFEbds_SO}.
Its proof involves only a minor
adaptation of the proof of Lemma~\ref{lem:AFEbds_SO}.
(We write ${\tau}$ in place of the $\sigma$ in Lemma~\ref{lem:AFEbds_SO} because here
we reserve $\sigma^2$ for the variance of $D$.)

\begin{lemma}
\label{lem:AFEbds_S}
Let $L\ge L_0$ with $L_0$ sufficiently large (depending only on $d,v$).
Let $\gamma$ be a multi-index with $\abs \gamma < d$. 
Choose $\tau \in (0,2)$ and $q_1,q_2$ satisfying
\begin{equation}
    \frac{ \abs \gamma } d < q_1\inv < 1,
    \qquad
    \frac{ 2 - {\tau} + \abs \gamma } d < q_2\inv < 1.
\end{equation}
Then $\hat F, \hat A, \hat E$ are $\gamma$-times weakly differentiable and
\begin{align}  \label{eq:EAF_S}
\Bignorm{ \frac{ \hat A_\gamma }{ \hat A } }_{q_1}, \;
\Bignorm{ \frac{ \hat F_\gamma }{ \hat F } }_{q_1}, \;
\Bignorm{ \frac{ \hat E_\gamma }{ \hat A \hat F }}_{q_2}
\lesssim 1,
\end{align} with constants independent of $L$.
\end{lemma}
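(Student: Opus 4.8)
The plan is to follow the proof of Lemma~\ref{lem:AFEbds_SO} essentially verbatim, making the three substitutions appropriate to the present setting: (i) here $\hat A=1-\hat\Dnn$ is the \emph{nearest-neighbour} operator, which obeys the stronger infrared bound $\hat A(k)\asymp|k|^2$ on all of $\T^d$ (standard, with no $L$-dependence), whereas $\hat F=1-\hat D$ still only obeys $\hat F(k)\gtrsim L^2|k|^2\wedge 1$ by \eqref{eq:AIR} with $\muSO=1$; (ii) $F=\delta-D$ plays the role of the earlier $F$ with $z=1$ and the lace-expansion term absent; (iii) $E=(1-\sigma^{-2})\delta-\Dnn+\sigma^{-2}D$ plays the role of the earlier $E$, with the small parameter $\beta$ replaced by $\sigma^{-2}\asymp L^{-2}$. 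Because $D$ has finite support, $\hat A,\hat F,\hat E$ are trigonometric polynomials, so weak differentiability is automatic and only the norm estimates require work. Combining the two infrared bounds gives $\frac{1}{\hat A\hat F}\lesssim\frac{L^{-2}}{|k|^4}\1_{B_L}+\frac{1}{|k|^2}\1_{B_L^c}$, which replaces \eqref{eq:EAF_decomp}; note the two differences from \eqref{eq:EAF_decomp}, namely $L^{-2}$ rather than $L^{-4}$ on $B_L$ and a $|k|^{-2}$ weight rather than none on $B_L^c$, both stem from $\hat A$ being nearest-neighbour.

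For $\hat A_\gamma/\hat A$ there is nothing to prove when $|\gamma|=0$; for $|\gamma|\ge1$ we use that $\hat A_\gamma=-\hat\Dnn_\gamma$ is a trigonometric polynomial with $|\hat\Dnn_\gamma(k)|\lesssim|k|^{(2-|\gamma|)\vee 0}$ by $\Z^d$-symmetry, so $|\hat A_\gamma(k)/\hat A(k)|\lesssim|k|^{-(|\gamma|\wedge 2)}$, whose $L^{q_1}$-norm is $O(1)$ with no $L$-dependence as soon as $q_1^{-1}>(|\gamma|\wedge 2)/d$, in particular when $q_1^{-1}>|\gamma|/d$. For $\hat F_\gamma/\hat F$ we repeat the argument leading to \eqref{eq:FF_L} with the lace-expansion term set to zero, using \eqref{eq:D_alpha}, Taylor's theorem when $|\gamma|=1$, and H\"older's inequality when $|\gamma|\ge2$ to split over $B_L$ and $B_L^c$; this produces $\|\hat F_\gamma/\hat F\|_{q_1}\lesssim L^{|\gamma|-d/q_1}\lesssim 1$ for $q_1^{-1}>|\gamma|/d$, the hypothesis $|\gamma|<d$ being what allows $q_1=1$.

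The substantive point is the estimate on $\hat E_\gamma/(\hat A\hat F)$. As in \eqref{eq:E_decay} one uses that $E$ has vanishing zeroth and (by symmetry) first and second moments, the compact support of $\Dnn$, and the freedom in \eqref{eq:D_decay}: for any chosen $\tau'\in(\tau,2)$,
\begin{equation}
    |E(x)|\le|\Dnn(x)|+\sigma^{-2}|D(x)|
    \lesssim\frac{\sigma^{-2}L^{2+\tau'}}{\xvee^{d+2+\tau'}}
    \lesssim\frac{L^{\tau'}}{\xvee^{d+2+\tau'}}
    \qquad(x\neq0),
\end{equation}
where the gain $\sigma^{-2}L^{2}\asymp 1$ takes the place of the factor $\beta$ in \eqref{eq:E_decay}. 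On $B_L$ one applies Lemma~\ref{lemma:Ek} (with $\tau'$ in the role of its $\rho$ and $\tau<\tau'$) to get $|\hat E_\gamma(k)|\lesssim L^{\tau'}|k|^{2+\tau-|\gamma|}$ when $|\gamma|<2+\tau$, so the $L^{q_2}$-norm of $L^{-2}|k|^{-4}|\hat E_\gamma|\1_{B_L}$ is $\lesssim L^{\tau'-2}\,L^{(2-\tau+|\gamma|)-d/q_2}=L^{\tau'-\tau+|\gamma|-d/q_2}$, which is $\lesssim L^{\tau'-2}\lesssim 1$ since $q_2^{-1}>(2-\tau+|\gamma|)/d$ and $\tau'<2$; the remaining range $|\gamma|\ge2+\tau$ is handled by H\"older with $\|\hat E_\gamma\|_r\lesssim 1+L^{|\gamma|-2-d/r}$ (from $\|\hat\Dnn_\gamma\|_r\lesssim 1$ and $\sigma^{-2}\|\hat D_\gamma\|_r\lesssim L^{|\gamma|-2-d/r}$) together with $\||k|^{-4}\1_{B_L}\|_p\lesssim L^{4-d/p}$, valid since $q_2^{-1}>(2-\tau+|\gamma|)/d\ge 4/d$. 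On $B_L^c$, where $\hat F(k)\gtrsim1$ and hence $\hat A\hat F\gtrsim|k|^2$, we decompose $\hat E_\gamma=\hat A_\gamma-\sigma^{-2}\hat D_\gamma$: the $\hat A_\gamma$-contribution is bounded by $|k|^{-(|\gamma|\wedge2)}\1_{B_L^c}$, whose $L^{q_2}$-norm is $O(1)$ in the allowed range (using $\tau<2$ so that $(|\gamma|\wedge2)/d<(2-\tau+|\gamma|)/d$), and the $\sigma^{-2}\hat D_\gamma$-contribution is $\lesssim\sigma^{-2}|\hat D_\gamma|\,|k|^{-2}\1_{B_L^c}$, whose $L^{q_2}$-norm is $\lesssim\sigma^{-2}\|\hat D_\gamma\|_r\||k|^{-2}\1_{B_L^c}\|_p\lesssim L^{|\gamma|-d/q_2}\lesssim1$ for $q_2^{-1}>|\gamma|/d$, the factor $\sigma^{-2}=O(L^{-2})$ exactly absorbing the $L$-growth of $\||k|^{-2}\1_{B_L^c}\|_p$. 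When $|\gamma|=0$ the same computation reads $\frac{\hat E}{\hat A\hat F}=\frac1{\hat F}-\frac{\sigma^{-2}}{\hat A}$, with $\frac1{\hat F}\lesssim1$ and $\frac{\sigma^{-2}}{\hat A}\lesssim L^{-2}|k|^{-2}$ on $B_L^c$.

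I expect the main obstacle to be the $B_L$ estimate for $\hat E_\gamma/(\hat A\hat F)$: one must check that the power of $L$ incurred in forcing $D$ to decay at rate $d+2+\tau'$ (namely $L^{\tau'}$, after the $\sigma^{-2}\asymp L^{-2}$ gain) is no worse than the $L^{2}$ produced by the $L^{-2}|k|^{-4}$ weight over the ball $B_L$ of radius $1/L$ — which is precisely the condition $\tau'<2$, available since $\tau<2$. This is the analogue of the inequality $\rho_2\le2$ invoked after \eqref{eq:E_decay}, and it is the one place where the nearest-neighbour nature of $\hat A$ (one fewer $L^{-2}$ in the infrared weight) must be reconciled with the extra factor $\sigma^{-2}$ sitting on $E$; the two effects cancel, which is why only "a minor adaptation" of the earlier proof is needed.
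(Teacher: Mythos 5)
Your proposal is correct and follows essentially the same route as the paper's proof: the same $B_L$/$B_L^c$ splitting driven by the two infrared bounds, the moment cancellation of $E$ via Lemma~\ref{lemma:Ek} on the small ball (where your choice $a=2+\tau'$, $\rho=\tau'$ in place of the paper's $a=4$, $\rho=2$ is an equally valid bookkeeping of the $\sigma^{-2}L^{2+\cdot}$ versus $L^{-2}$ cancellation), and H\"older with \eqref{eq:D_alpha} for large $|\gamma|$. The only deviations are cosmetic — on the complement region you split $\hat E_\gamma$ into its $\hat\Dnn$ and $\hat D$ pieces rather than bounding $|k|^{-2}|\hat E_\gamma|$ as a whole, and there is a harmless sign slip in writing $\hat E_\gamma=\hat A_\gamma-\sigma^{-2}\hat D_\gamma$ (it should be $+\sigma^{-2}\hat D_\gamma$, irrelevant for the absolute-value estimates).
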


Given Lemma~\ref{lem:AFEbds_S}, it follows exactly as in the proof of Proposition~\ref{prop:f_SO} that
the function $\hat f = \hat E / (\hat A \hat F)$
is $d-1$ times weakly differentiable, and
\begin{align} \label{eq:f_alpha_S}
\norm {   \hat f_\alpha  }_{r} \lesssim 1
	\qquad (r\inv > \frac{ \abs \alpha }d)
\end{align}
for $\abs \alpha \le d-1$, with the constant independent of $L$.
We use this to prove Lemma~\ref{lemma:phi} first, and then we complete the proof of
Proposition~\ref{prop:so-nn}
by proving Lemma~\ref{lem:AFEbds_S}.

\begin{proof}[Proof of Lemma~\ref{lemma:phi} assuming Lemma~\ref{lem:AFEbds_S}]
For simplicity, we write $C = C_1$ and $\hat C = 1/\hat A$.
It suffices to consider small $\eps>0$.
By the product rule, and since $\sigma^{-2} \lesssim L^{-2}$,
it suffices to prove that, for $|\alpha_1|+|\alpha_2|=|\alpha| \le d-1$,
\begin{align}
\label{eq:Df}
     \|\hat f_{\alpha_1}\hat D_{\alpha_2}\|_1 &\lesssim L^{-(1-\eps)} , \\
\label{eq:CF}
    \|\hat C_{\alpha_1}\hat F_{\alpha_2}\|_1 &\lesssim L^{1+\eps} .
\end{align}
The bound \eqref{eq:Df} directly follows from H\"older's inequality, \eqref{eq:f_alpha_S}, and \eqref{eq:D_alpha}:
\begin{align}
\norm{ \hat f_{\alpha_1}\hat D_{\alpha_2} }_{1}
&\le
	\norm{ \hat f_{\alpha_1} }_{\frac d {\abs {\alpha_1} + \eps }}
	\norm{ \hat D_{\alpha_2} }_{\frac d { \abs{\alpha_2} +1 - \eps } }
\lesssim  L^{ \abs{\alpha_2} - (\abs{\alpha_2} + 1-\eps) } = L^{-(1-\eps)}.
\end{align}
To prove \eqref{eq:CF}, we first note that, by
a direct computation using the explicit formula
\begin{align} \label{eq:C}
\hat C_1(k) = \frac 1 { \hat A(k) } = \frac 1 { 1 - \hat \Dnn(k) }, \qquad
\Dnn(k) = d^{-1}\sum_{j=1}^d \cos k_j,
\end{align}
we have $\abs{ \hat C_{\alpha_1} (k) } \lesssim \abs k^{-(2 + \abs {\alpha_1} )}$ for all $\alpha_1$.
Consider first the case $\abs{\alpha_2}<1+\eps$.
Since $\sum_{x\in\Z^d} F(x) = 0$ and $1+\eps \le 2$, Taylor expansion at $k=0$ gives
\begin{align} \label{eq:F_alpha_S}
\abs{ \hat F_{\alpha_2 }(k) }
\le \sum_{x\in\Z^d} \abs{ F(x) } \abs{ \grad^{\alpha_2}(\cos (k\cdot x) - 1) }
\lesssim \sum_{x\in\Z^d} \abs{ F(x) } \abs k^{1+\eps - \abs{ \alpha_2 } } \abs x^{1+\eps}
\lesssim L^{1+\eps} \abs k^{1+\eps - \abs{\alpha_2} },
\end{align}
so that \begin{align}
\norm{  \hat C_{\alpha_1} \hat F_{\alpha_2} }_1
\lesssim  L^{1+\eps} \norm{ \abs k^{ -(2 + \abs{\alpha_1} + \abs{\alpha_2} - 1 - \eps )} }_1 \lesssim L^{1+\eps},
\end{align}
since $2 + \abs{ \alpha_1 } + \abs{ \alpha_2 } - 1 - \eps \le d-\eps < d$.
If instead
$\abs{ \alpha_2} \ge 1+\eps$, then
we use H\"older's inequality and the norm bound \eqref{eq:D_alpha}
 on $\hat F = 1 - \hat D$ to
complete the proof of \eqref{eq:CF} with
\begin{align}
\norm{  \hat C_{\alpha_1} \hat F_{\alpha_2 } }_1
\le  \norm{ \hat C_{\alpha_1} }_{\frac{d}{2 + \abs{ \alpha_1 } + \eps} }
	\norm{ \hat F_{\alpha_2} }_{\frac{d}{ \abs{ \alpha_2 } - 1-\eps} }
\lesssim  L^{  \abs{ \alpha_2 }    -( \abs{ \alpha_2 } - 1-\eps )  }
= L^{1+\eps}.
\end{align}
This concludes the proof.
\end{proof}

\begin{proof}[Proof of Lemma~\ref{lem:AFEbds_S}]
The claim on $\hat A_\gamma / \hat A$ follows from Taylor expansion and the explicit, $L$-independent formula \eqref{eq:C} (see \cite[Lemma~2.5]{LS24a}).
The claim on $\hat F_\gamma / \hat F$ follows from Lemma~\ref{lem:AFEbds_SO}, with $F = \delta - D$ and $\beta=0$.
We cannot immediately apply our previous bounds to $\hat E_\gamma / (\hat A \hat F)$, because now it mixes both $\Dnn$ and $D$, and in particular
we now have the two different infrared bounds
\begin{align}
\hat A(k) \gtrsim \abs k^2, \qquad \hat F(k) \gtrsim L^2 \abs k^2 \wedge 1.
\end{align}
We adapt the proof of Lemma~\ref{lem:AFEbds_SO} to bound $\hat E_\gamma / (\hat A \hat F)$, as follows.
By the infrared bounds, \begin{align} \label{eq:EAF_decomp_S}
\Bigabs{ \frac{ \hat E_\gamma }{ \hat A \hat F } (k) }
\lesssim L^{-2} \abs k^{-4} \abs{ \hat E_\gamma(k) } \1_{B_L} + \abs k^{-2} \abs{ \hat E_\gamma(k) } .
\end{align}
Recall that $E$ is given by
\begin{align} \label{eq:E_S}
E = A - \sigma^{-2} F = (\delta - \Dnn) - \sigma^{-2} ( \delta - D ),
\end{align}
so we have
$\sum_{x\in\Z^d} \abs x^2 \abs { E(x) } \le 1+1 = 2$.
Also, by the norm estimates \eqref{eq:D_alpha} of $\hat D_\gamma$, we have
\begin{align} \label{eq:E_gamma_S}
\norm{ \hat E_\gamma }_{r}
\lesssim 1 + L^{-2} ( 1 + L^{\abs \gamma - d/r})
\le  2 +  L^{\abs \gamma -2 - d/r}
	\qquad ( 0 \le r\inv < 1 ).
\end{align}

Let ${\tau} \in (0,2)$ and $q = q_2 \in ( \frac{ 2 - {\tau} + \abs \gamma } d ,1)$.
We start with the second term of \eqref{eq:EAF_decomp_S}.
Suppose first that $\abs \gamma < {\tau}$, so $\abs \gamma \in \{ 0, 1 \}$.
By symmetry and $\sum_{x\in\Z^d} E(x)=0$, it follows that $\hat E(k) = \sum_{x\in\Z^d} E(x) (\cos (k\cdot x)  - 1)$.
Taylor expansion of $\cos(k\cdot x) - 1$ or its derivative in $k$ gives
\begin{align}
\abs { \hat E_\gamma (k) }
\lesssim  \sum_{x\in\Z^d} \abs k^{2-\abs \gamma} \abs x^2 \abs{ E(x) }
\lesssim \abs k^{ 2 - \abs \gamma},
\end{align}
so that the $L^q$ norm of $ \abs k^{-2} \abs{ \hat E_\gamma(k) } \lesssim \abs k^{ - \abs \gamma}$ can be bounded independent of $L$ when $q \inv > \abs \gamma /d$.
This includes the desired $q$'s because ${\tau} \le 2$.
If $\abs \gamma \ge  {\tau}$, we observe that every $q$
with $\frac{ 2 - {\tau} + \abs \gamma } d < q\inv < 1$
can be written
in the form $q \inv = r\inv + p\inv$ for some
$r\inv \in ( \frac{ \abs \gamma - {\tau} } d, 1 )$ and $p\inv > 2/d$.
Then \eqref{eq:E_gamma_S} and H\"older's inequality gives
\begin{align}
\bignorm{ \abs k^{-2} \hat E_\gamma (k) }_q
\le \bignorm{ \abs k^{-2} }_p \norm{ \hat E_\gamma }_r
\lesssim 1 + L^{\abs \gamma - 2 - d/r}
\le 1 + L^{ {\tau} - 2 }
\le 2
.
\end{align}
Therefore, the $L^q$ norm of the second term on the right-hand side of \eqref{eq:EAF_decomp_S}
is bounded uniformly in $L$.

For the first term of \eqref{eq:EAF_decomp_S}, suppose first that $\abs \gamma < 2 + {\tau}$.
By \eqref{eq:E_S} and the decay \eqref{eq:D_decay} of $D$,
\begin{align} \label{eq:E_decay_S}
\abs{ E(x) }
\lesssim \frac{ 1 + L^{-2}(1+L^{4}) }{ \xvee^{d+4} }
\lesssim \frac{ L^{2} }{ \xvee^{d+4} }.
\end{align}
Lemma~\ref{lemma:Ek} with $\rho = 2$ then gives $\abs{ \hat E_\gamma (k) } \lesssim L^{2} \abs k^{2+{\tau} - \abs \gamma}$.
Since the $L^2$ cancels with $L^{-2}$, the $L^q(B_L)$ norm of the first term on
on the right-hand side of \eqref{eq:EAF_decomp_S} is exactly that of $\abs k^{2+{\tau}-\abs\gamma -4} = \abs k^{-(2-{\tau} + \abs \gamma)}$, which is of order $L^{2 - {\tau} + \abs \gamma - d/q} \le 1$ for $q\inv > ( 2 - {\tau} + \abs \gamma)/d$,
as desired.
For the remaining case $\abs \gamma \ge 2 + {\tau}$, H\"older's inequality and \eqref{eq:E_gamma_S} imply that the $L^q$ norm of the first term on the
right-hand side of \eqref{eq:EAF_decomp_S} is bounded by
\begin{align}
\frac 1 {L^2} \norm{ \hat E_\gamma }_r \bignorm{  \abs k ^{-4} \1_{B_L}}_p
&\lesssim \frac 1 {L^2} ( 1 + L^{\abs \gamma -2 - d/r} ) L^{4 - d/p}
\lesssim L^{\abs \gamma - d/q}
\end{align}
for $q\inv = r \inv + p \inv$, $r\in (1, \infty]$,
$ \frac { \abs \gamma - 2 - {\tau} } d < r \inv \le  \frac { \abs \gamma - 2 }  d$, and $p \inv > 4/d$. In particular, the bound holds for all $q\inv > ( 2 - {\tau} + \abs \gamma)/d$. The desired bound then follows from ${\tau} \le 2$.
This completes the proof.
\end{proof}

\section{Proof of Lemma~\ref{lemma:D_L}}
\label{app:D}

Lemma~\ref{lemma:D_L} makes the following assertion for $D$
defined by Definition~\ref{def:D}, for $L\ge L_0$
with $L_0$ sufficiently large depending
only on $d$ and $v$:
\begin{gather}
\label{eq:D_decay-B}
D(x) \lesssim \frac{  L^{a} } { \xvee^{d + a} }
\quad \text{for any $a>0$},
\\
\label{eq:AIR-B}
\hat A_\muSO(k) - \hat A_\muSO(0) \gtrsim  L^2 \abs k^2 \wedge 1
\quad \text{for all $k\in\T^d$, uniformly in $\muSO \in [\tfrac 12,1]$},
\\
\label{eq:D_alpha-B}
\norm {  \hat D_\alpha }_{q } \lesssim L^{\abs\alpha - d/q} 	
\quad \text{for each $\alpha$ and for all $0 \le q\inv < 1$}.
\end{gather}

\begin{proof}
For \eqref{eq:D_decay-B}, let $b >0$.
By definition, $D(x) \lesssim L^{-d}\1_{\|x\|_\infty \le L}$, so
$D(x) \lesssim L^{-d}(L/|x|)^b$, and the desired result follows by choosing $b=d+a$.

The infrared bound \eqref{eq:AIR-B} is
proved in \cite[Appendix~A]{HS02}.

It remains to prove \eqref{eq:D_alpha-B}.
For $q=\infty$, we simply observe that $|\hat D_\alpha(k)| \le \sum_{x\in\Z^d} |x^\alpha |D(x)
\le L^{|\alpha|}$.
For $q<\infty$, we divide the integral according to whether or not $\norm k_\infty \le 1/L$.
When $\norm k_\infty \le 1/L$, we use $|\hat D_\alpha(k)|
\le L^{|\alpha|}$.  Since the volume is
of order
$L^{-d}$, we obtain the desired upper bound $L^{|\alpha|-d/q}$.
When $\norm k_\infty > 1/L$, we apply \cite[(5.34)]{HS90a} (whose proof generalises
to any number of derivatives)
as in the proof of \cite[Lemma~5.7]{HS90a}, as follows.
The domain $\norm k_\infty > 1/L$ is the disjoint union over nonempty subsets
$S \subset \{1, \ldots, d\}$ of
\begin{equation}
	R_S = \{ k \in \R^d: 1/L < k_i \leq \pi \; \text{for} \;
	i \in S,\;\; |k_j| \leq 1/L \; \text{for} \; j \not\in S \}.
\end{equation}
By \cite[(5.34)]{HS90a}, for $q \in (1,\infty)$,
\begin{align}
	\int_{R_S} |\hat D_\alpha (k)|^q dk & \lesssim
	L^{q|\alpha|} \int_{R_S} \prod_{i \in S} |Lk_i|^{-q} dk
    \nnb &
    \lesssim
    L^{q(|\alpha|-|S|)} \Big( \int_{1/L}^\pi t^{-q} dt \Big)^{|S|}
    \Big( \int_0^{1/L}  1\, dt \Big)^{d-|S|}
	\nnb
	&\lesssim
    L^{q(|\alpha|-|S|)} L^{(q-1)|S|} L^{|S|-d} = L^{q|\alpha| -d}.
\end{align}
The desired result \eqref{eq:D_alpha-B} then follows by summing over $S$.
\end{proof}

\end{appendix}

\section*{Acknowledgement}
The work of both authors was supported in part by NSERC of Canada.


\end{document}